\colorlet{shadecolor}{gray!40}
\theoremstyle{plain}
\newtheorem*{maintheorem}{Theorem A}
\newtheorem{theorem}{Theorem}[section]
\newtheorem{conjecture}[theorem]{Conjecture}
\newtheorem{proposition}[theorem]{Proposition}
\newtheorem{corollary}[theorem]{Corollary}
\newtheorem{lemma}[theorem]{Lemma}
\theoremstyle{definition}
\newtheorem{definition}[theorem]{Definition}
\newtheorem{example}[theorem]{Example}
\def\FF{{\mathbb F}}
\def\kb{{\mathbf k}}
\def\xb{{\mathbf x}}
\def\ini{\mathrm{in}}
\theoremstyle{remark}
\newtheorem{remark}[theorem]{Remark}
\title[Gr\"obner bases, resolutions, and the Lefschetz properties]{Gr\"obner bases, resolutions, and the Lefschetz properties for powers of a general linear form in the squarefree algebra}
\author[Jonsson Kling, Lundqvist, Mohammadi, Orth, S\'aenz-de-Cabez\'on ]{Filip Jonsson Kling, Samuel Lundqvist, Fatemeh Mohammadi, Matthias Orth, and Eduardo S\'aenz-de-Cabez\'on}
\address{Filip Jonsson Kling, Department of Mathematics, Stockholm University, 
Sweden}
\email{filip.jonsson.kling@math.su.se}
\address{Samuel Lundqvist, Department of Mathematics, Stockholm University, 
Sweden}
\email{samuel@math.su.se}
\address{{\footnotesize Fatemeh Mohammadi, Departments of Computer Science and Mathematics, 
KU Leuven, Belgium}}
\email{fatemeh.mohammadi@kuleuven.be}
\address{Matthias Orth, Institute of Mathematics, University of Kassel,
Germany}
\curraddr{Department of Mathematics, 
KU Leuven, Belgium}
\email{ matthias.orth@kuleuven.be}
\address{Eduardo S\'aenz-de-Cabez\'on, Departamento de Matemáticas y Computación, Universidad de La Rioja, Spain}
\email{eduardo.saenz-de-cabezon@unirioja.es}
\begin{document}

\begin{abstract}
For the almost complete intersection ideals $(x_1^2, \dots, x_n^2, (x_1 + \cdots + x_n)^k)$, we compute their reduced Gröbner basis for any term ordering, revealing a combinatorial structure linked to lattice paths, elementary symmetric polynomials, and Catalan numbers. Using this structure, we classify the weak Lefschetz property for these ideals. Additionally, we provide a new proof of the well-known result that the squarefree algebra satisfies the strong Lefschetz property. Finally, we compute the Betti numbers of the initial ideals and construct a minimal free resolution using a Mayer-Vietoris tree approach.
\end{abstract}

\maketitle

{\hypersetup{linkcolor=black}
\setcounter{tocdepth}{2}
{\tableofcontents}}

\section{Introduction}
Let $\mathbf{k}$ be a field of characteristic zero, let $R = \mathbf{k}[x_1, \ldots, x_n]$ be the polynomial ring in $n$ variables, and let $I = (x_1^{a_1}, \ldots, x_n^{a_n})$. Stanley \cite{Stanley} and Watanabe \cite{WATANABE1989194} independently showed that the Artinian monomial complete intersection $R/I$ has the strong Lefschetz property. This result has been central to the subsequent and extensive work on Lefschetz properties in commutative algebra. Showing that $R/I$ has the SLP is equivalent to showing that the Hilbert series of
$$S = R/(x_1^{a_1}, \ldots, x_n^{a_n}, (x_1 + \cdots + x_n)^{a_{n+1}})$$
equals
$$\left[\frac{\prod_{i=1}^{n+1}(1 - t^{a_i})}{(1 - t)^n}\right],$$
where the brackets denote truncation at the first non-positive coefficient.

This establishes a natural connection to the Fr\"oberg conjecture \cite{Froberg}, which states that the Hilbert series for the quotient of $R$ by the ideal generated by $m$ general forms of degrees $a_1, \ldots, a_m$ is given by
$$ \left[ \frac{\prod_{i=1}^{m} (1 - t^{a_i})}{(1 - t)^n} \right]. $$

An algebra that is the quotient of an ideal generated by forms, not necessarily general, of degrees $a_1, \ldots, a_m$, whose Hilbert series agrees with the one conjectured by Fr\"oberg, is called \emph{thin}. It is well known that the existence of a thin algebra corresponding to a given degree sequence provides a proof of the conjecture for that sequence. Thus, Stanley's and Watanabe's result shows that $S$ is thin, and that the Fr\"oberg conjecture is true for $n+1$ general forms.

Many proofs of the fact that $S$ is thin have appeared in the literature. Hara and Watanabe \cite{Watanabe_Boolean} used a reduction argument to the quadratic case. They show that if $\mathbf{k}[x_1, \ldots, x_m]/(x_1^2, \ldots, x_m^2, (x_1 + \cdots + x_m)^{a_{m+1}})$ is thin, where $m$ is the socle degree $a_1 + \cdots + a_n - n$ of $S$, then $S$ is also thin. Equivalently, the strong Lefschetz property of squarefree algebras in any number of variables implies the strong Lefschetz property for all monomial complete intersections in any number of variables.

The quadratic case turns out to be a key building block when studying the weak Lefschetz property for almost complete intersection algebras of the form $R/(x_1^d, \ldots, x_n^d, (x_1 + \cdots + x_n)^d)$. A result by Sturmfels and Xu \cite{SturmfelsXu} for the case $d = 2$ was the key tool used by Boij and the second author \cite{boijlundqvist} to classify the weak Lefschetz property for any $d$. However, the argument here differs from the one used by Hara and Watanabe. In this case, it is the \emph{failure} of the weak Lefschetz property for $d = 2$ that is shown to induce failure for higher values of $d$, except for a few specific values of $n$.

The ideal $(x_1^{a_1}, \ldots, x_n^{a_n}, (x_1 + \cdots + x_n)^{a_{n+1}})$, which may initially appear quite specialized, is in fact a general object. After a linear change of coordinates, we obtain the isomorphism
$$ S \cong R/(\ell_1^{a_1}, \ldots, \ell_{n+1}^{a_{n+1}}), $$
where the $\ell_i$ are general linear forms. This reveals a deeper geometric connection, particularly in the context of the interpolation of general fat points via Macaulay's inverse system, as described by Emsalem and Iarrobino \cite{emsalemiarrobino}.
In light of this, it is natural to further explore the structure of the thin algebras generated by $n+1$ powers of general linear forms. 

Our main result is that we explicitly determine the Gr\"obner bases for
$$ I_{n,k} = (x_1^2, \ldots, x_n^2, (x_1 + \cdots + x_n)^k). $$
As is well known, Gr\"obner bases are a key tool in computer algebra systems for performing computations. However, they are also known for their lack of respect for symmetry, strong dependence on the monomial ordering, and often challenging theoretical description. In contrast, the Gr\"obner basis we determine exhibits a different behavior.
First, all Gr\"obner basis elements, except for the squares, are elementary symmetric polynomials in a subset of the variables, and in particular, they have  $0,1$-coefficients.
Second, the sequence formed by the number of Gr\"obner basis elements of degree greater than 2 corresponds to the $(k-1)$-fold convolution of Catalan numbers.
Third, the Gr\"obner basis is independent of the monomial ordering once we fix the ordering $x_1 \succ \cdots \succ x_n$, and it is also invariant under permutations of the first $k$ variables and the action of transpositions of the form $(k+2i - 1,k+ 2i)$ for $i \geq 1$.

\medskip
We summarize our main result in the following theorem. A more general version of this theorem is stated as Theorem~\ref{thm:GBasis} in Section~2.
\begin{maintheorem}\label{thm:A} 
A reduced Gröbner basis of $I_{n,k} = (x_1^2, \dots, x_n^2, (x_1 + \cdots + x_n)^k)$ for $k\geq 2$ 
is given by
\begin{equation*}\label{eq:g elements}
G_{n,k}=\{ x_1^{2},\ldots,x_n^2 \}\cup \bigcup_{d=k}^{k+\lfloor (n-k)/2\rfloor}\left\{ g_{A,n,k}\mid A\in\mathcal{A}, |A|=d \right\},
    \end{equation*}
where $\mathcal{A}$ is the family of inclusion-minimal members of the family $\mathcal{A}'$ of subsets $A \subseteq \{1, \ldots, n\}$ satisfying $\max(A) = 2|A| - k$. For $|A|=d$,
\begin{equation}\label{eq:g_{A,n,k}-intro}
    g_{A,n,k} = e_d(x_{i_1},\dots, x_{i_{n-d+k}})
\end{equation}
is the elementary symmetric polynomial of degree $d$ in the variables indexed by the set $\{i_1,\dots, i_{n-d+k}\}=A\cup \{2d-k+1,\dots, n\}$ with leading term $x_A=\prod_{a\in A}x_a$. 

Moreover, for fixed $k$ the sequence of cardinalities $|\{A\in\mathcal{A}: |A|=d\}|$ is a $(k-1)$-fold convolution of the sequence of Catalan numbers.
\end{maintheorem}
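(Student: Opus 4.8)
The statement packages two claims: that $G_{n,k}$ is a reduced Gröbner basis of $I_{n,k}$ for the term order (refining $x_1\succ\cdots\succ x_n$), and that the sequence $d\mapsto|\{A\in\mathcal A:|A|=d\}|$ is a convolution power of the Catalan sequence. For the first I would argue in four steps: (a) every $g_{A,n,k}$ lies in $I_{n,k}$; (b) the leading term of $g_{A,n,k}$ is $x_A$; (c) the monomial ideal $M_{n,k}=(x_1^2,\dots,x_n^2)+(x_A:A\in\mathcal A)$ satisfies $\dim_{\kb}R/M_{n,k}=\dim_{\kb}R/I_{n,k}$, forcing $\ini(I_{n,k})=M_{n,k}$; (d) the displayed leading terms minimally generate $M_{n,k}$ and the non-leading monomials of each $g_{A,n,k}$ are standard. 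For (a), pass to the squarefree algebra $B=R/(x_1^2,\dots,x_n^2)$: expanding the multinomial and discarding terms with a repeated variable gives $(x_1+\cdots+x_n)^k\equiv k!\,e_k(x_1,\dots,x_n)$, so (characteristic zero) the image of $I_{n,k}$ in $B$ is the principal ideal $\bar I=(e_k(x_1,\dots,x_n))B$. Using $e_j(T)=e_j(T\setminus\{s\})+x_s\,e_{j-1}(T\setminus\{s\})$ together with $x_s^2=0$, one shows by induction on $m=|\{1,\dots,n\}\setminus T|$ that $e_d(T)\in\bar I$ whenever $d\ge k$ and $|\{1,\dots,n\}\setminus T|=d-k$: the base case $T=\{1,\dots,n\}$ follows from $(x_1+\cdots+x_n)^d=(x_1+\cdots+x_n)^{d-k}(x_1+\cdots+x_n)^k\in I_{n,k}$, and the step writes $e_d(T)=e_d(T\cup\{s\})-x_s\,e_{d-1}(T\cup\{s\})$ (using $x_s^2=0$ to replace $x_s e_{d-1}(T)$ by $x_s e_{d-1}(T\cup\{s\})$) and applies the hypothesis twice. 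Since the index set $A\cup\{2d-k+1,\dots,n\}$ of $g_{A,n,k}$ omits exactly the $d-k$ elements of $\{1,\dots,2d-k\}\setminus A$, this yields $g_{A,n,k}\in I_{n,k}$; being squarefree, $g_{A,n,k}$ lies in $I_{n,k}$ itself. I expect (a) to be routine.

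For (b), the monomials of $e_d(x_{i_1},\dots,x_{i_{n-d+k}})$ are the squarefree degree-$d$ monomials supported on $\{i_1,\dots,i_{n-d+k}\}$; since $\max A=2d-k$ while every index of $A\cup\{2d-k+1,\dots,n\}$ outside $A$ exceeds $2d-k$, the set $A$ consists of the $d$ smallest indices of this set, so for any competing $d$-subset $B\ne A$ one has $a_i\le b_i$ for all $i$ (sorted increasingly), whence $x_A=\prod_i x_{a_i}\succ\prod_i x_{b_i}=x_B$ in every term order refining $x_1\succ\cdots\succ x_n$; thus the leading term is $x_A$, independently of the ordering. Step (c) is the crux. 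From (a),(b) we get $M_{n,k}\subseteq\ini(I_{n,k})$, so $R/\ini(I_{n,k})$ is a quotient of $R/M_{n,k}$; as $R/I_{n,k}$ is Artinian, $\dim_\kb R/\ini(I_{n,k})=\dim_\kb R/I_{n,k}$, so it suffices to prove $\dim_\kb R/M_{n,k}\le\dim_\kb R/I_{n,k}$, after which both quotients coincide and $\ini(I_{n,k})=M_{n,k}$. By the theorem of Stanley and Watanabe $R/I_{n,k}$ is thin, i.e.\ $\dim_\kb[R/I_{n,k}]_m=\max\{0,\binom nm-\binom n{m-k}\}$. On the other side, a standard monomial of $M_{n,k}$ is a squarefree $x_A$, $A=\{a_1<\cdots<a_r\}$, divisible by no $x_{A'}$ with $A'\in\mathcal A$; unwinding the definition of $\mathcal A$, this holds iff $a_p\ge 2p-k+1$ for all $p$. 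Encoding $A$ by the $\pm1$ lattice path of length $n$ that steps $+1$ at the ones and $-1$ at the zeros of the indicator word of $A$, the height after the $p$-th $+1$-step is $2p-a_p$, and since heights increase only at $+1$-steps, the condition $a_p\ge 2p-k+1$ for all $p$ is equivalent to the path never rising above height $k-1$; the reflection (André) principle counts such paths with $m$ up-steps as $\max\{0,\binom nm-\binom n{m-k}\}$, which matches $\dim_\kb[R/I_{n,k}]_m$ degree by degree. Getting this translation exactly right is the main obstacle.

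For (d): no $x_i^2$ divides a squarefree $x_A$, and by the minimality clause in the definition of $\mathcal A$ no element of $\mathcal A$ is properly contained in another, so the listed leading terms minimally generate $M_{n,k}$; moreover every non-leading monomial $x_B$ of $g_{A,n,k}$ (so $B\ne A$ is a $d$-subset of $A\cup\{2d-k+1,\dots,n\}$) is standard, because any $C\subseteq B$ with $\max C=2|C|-k$ must have $|C|<d$ (as $\max B\ne 2d-k$ since $B\ne A$), hence $\max C<2d-k$, hence $C\subseteq A$, so $C$ would be a proper subset of $A$ with the defining property of $\mathcal A$ --- impossible. Finally, for the Catalan-convolution count: using the same characterization ($A\in\mathcal A$, $|A|=d$, iff $A\subseteq\{1,\dots,2d-k\}$, $\max A=2d-k$, and the path $p\mapsto 2p-a_p$ first reaches the value $k$ only at $p=d$), the recursive decomposition of $\mathcal A$ at the first index where the associated path returns to a fixed level yields the generating-function identity $\sum_{d\ge k}|\{A\in\mathcal A:|A|=d\}|\,t^{\,d-k}=c(t)^{k}$, where $c(t)=1+t\,c(t)^2$ is the Catalan generating function; equivalently this sequence is the $(k-1)$-fold self-convolution of the Catalan numbers (reducing to the Catalan sequence $1,2,5,14,\dots$ when $k=2$), and the standard bijection with $k$-tuples of Dyck paths gives the closed form $\frac{k}{2d-k}\binom{2d-k}{d-k}$ and the lattice-path picture advertised in the abstract. (Alternatively one could establish (a)--(d) by checking Buchberger's criterion directly, the $S$-polynomials among the $g_{A,n,k}$ and the $x_i^2$ reducing to zero via the same elementary-symmetric identities; the dimension-count route additionally re-proves thinness if one feeds in the standard lower bound $\dim_\kb R/I_{n,k}\ge\sum_m\max\{0,\binom nm-\binom n{m-k}\}$ instead of Stanley--Watanabe.)
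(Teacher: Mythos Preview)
Your overall architecture matches the paper's: membership of $g_{A,n,k}$ in $I_{n,k}$, identification of the leading term, a lattice-path dimension count forcing $\ini(I_{n,k})=M_{n,k}$, reducedness, and the Catalan-convolution enumeration. Two points deserve comment.

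First, your step (a) has a small gap. You induct on $m=|[n]\setminus T|$ with the hypothesis ``$e_d(T)\in\bar I$ whenever $|[n]\setminus T|=d-k$'', and in the step write $e_d(T)=e_d(T\cup\{s\})-x_s\,e_{d-1}(T\cup\{s\})$. The second summand has complement size $m-1=(d-1)-k$ and is covered, but $e_d(T\cup\{s\})$ has degree $d$ with complement size $m-1=d-k-1\ne d-k$, so your hypothesis as stated does not apply to it. The fix is immediate: strengthen the claim to ``$e_d(T)\in\bar I$ whenever $d\ge k$ and $|[n]\setminus T|\le d-k$''. The base case $T=[n]$ then covers all $d\ge k$ via $(x_1+\cdots+x_n)^d\equiv d!\,e_d$ (which your phrasing already hints at), and both summands are handled in the step (for $e_{d-1}$ one needs $d-1\ge k$, which holds since $m>0$ forces $d>k$). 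With this correction your membership argument is valid --- and more economical than the paper's: the paper (Proposition~\ref{prop:ExplicitCoefficients_f_A_n_k} and Lemma~\ref{lem:BinomialCoeffsAlternatingSums}) instead writes $g_{A,n,k}$ explicitly as a $\kb$-linear combination of the $f_{S,n,k}=\frac{1}{k!}\mathrm{SFP}(x_S\ell^k)$ by setting up and solving a triangular linear system, obtaining the closed form $\lambda_i=(-1)^i k\big/\big((k+i)\binom{d}{k+i}\big)$. Your induction bypasses this computation, at the cost of not producing the explicit expression of Theorem~\ref{thm:f_description}.

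Second, in step (c) you invoke Stanley--Watanabe to know $\dim_\kb[R/I_{n,k}]_m=\max\{0,\binom nm-\binom n{m-k}\}$ and then match it against the reflection count of standard monomials for $M_{n,k}$. The paper reverses the logic: it uses only Fr\"oberg's \emph{lower} bound together with the lattice-path \emph{upper} bound (Lemma~\ref{lem:CountingArgument}, Corollary~\ref{cor:Hilbert_series}) to squeeze out the Hilbert series, thereby \emph{deriving} the SLP of the squarefree algebra rather than assuming it. You note this alternative in your final parenthetical; it is precisely the route the paper takes, and it is what lets the Gr\"obner computation produce the Lefschetz statement instead of consuming it.
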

The case when $k=1$ is also treated, see Remark \ref{rem:k=1} for details. The case when $k>n$ is trivial because then $I_{n,k}=(x_1^2,\ldots,x_n^2)$ is monomial; nevertheless, the formula for $G_{n,k}$ includes this case.

Notice that we can present 
$I_{n,k}$ as $(x_1^2,\ldots,x_n^2,e_k(x_1,\ldots,x_n))$, which gives a connection to a work by Haglund, Rhoades, and Shimozono \cite{HAGLUND2018851} on the Delta conjecture \cite{haglund2018delta}. They determine the lexicographic Gr\"obner bases for the class of ideals $(x_1^k,\ldots,x_n^k,e_n(x_1,\ldots,x_n), \ldots, e_{n-k+1}(x_1,\ldots,x_n))$, and surprisingly show that its elements can be described in terms of Demazure characters, which provides an overlap with our class in the trivial cases $I_{n,n-1}$ and $I_{n,n}$.

From the Gr\"obner basis, we can determine the initial ideal, and use its structure to provide a new proof of the well-known fact that the squarefree algebra satisfies the strong Lefschetz property. We then extend our analysis and present our main result on the Lefschetz properties which is that 
$$R/I_{n,k} \text{ has the WLP if and only if }
\begin{cases}
k\geq \frac{n-3}{2} & \text{ for } n \text{ odd,}\\
k\geq \frac{n}{2} & \text{ for } n \text{ even.}\\
\end{cases}$$

We prove this result by combining Gr\"obner deformation techniques from Conca \cite{Conca} and Wiebe \cite{Wiebe} with a thorough investigation of relations between pairs of powers of general linear forms in the squarefree algebra. As a corollary, we also obtain a minor result related to the Fr\"oberg conjecture.

It is well known that the Lefschetz properties often serve as a bridge between different areas of mathematics, with well-established connections to commutative algebra, algebraic geometry, combinatorics, representation theory, and algebraic topology. In this spirit, one can view the present paper as contributing a link to computer algebra.

Finally, we study the minimal free resolution of the initial ideal of $I_{n,k}$.~We~show that its squarefree part is strongly squarefree stable, allowing us to apply results by Gasharov, Hibi, and Peeva \cite{GasharovHibiPeeva2002} to obtain closed formulas for the Betti numbers. 
The initial ideal itself is a combination of squarefree and powers ideals, for which Murai \cite{Murai2008} has provided recursive formulas for their Betti numbers.~To~explicitly compute the Betti numbers, we construct a minimal free resolution using Mayer-Vietoris trees, as described by the fifth author \cite{Saenz-De-Cabezon2009MVT}.~Our main result regarding the homological properties of the initial ideal of $I_{n,k}$ 
is that the sequence of extremal Betti numbers is a $(k-1)$-fold convolution of the Catalan number sequence.

Some of the ideas developed in this paper were later extended in \cite{klmo}.

\begin{remark} During the finalization of the paper, we were informed that Booth, Singh, and Vraciu in \cite{booth2024weak} independently have provided a description of the initial ideal of $(x_1^d,\ldots,x_n^d,(x_1+\cdots+x_n)^d)$ for $d=2,3$, using different techniques. Their work overlaps with ours regarding the initial ideal for $I_{n,2}$. 
They also find relations for establishing failure of the weak Lefschetz property (WLP) due to injectivity for $I_{n,2}$ for certain $n$, which are similar to but distinct from special cases of the relations we obtain for establishing failure of the WLP for $I_{n,k}$ for certain $n$ and $k$.
\end{remark}

\section{The Gr\"obner basis} \label{sec:description} 
\noindent Recall that a Gr\"obner basis for an ideal $I$ with respect to a term ordering $\prec$ is a set $\{g_1,\ldots, g_r\}$ such that the leading monomials of the $g_i$ generate the initial ideal of $I$, denoted $\ini(I)$, with respect to $\prec$. A Gr\"obner basis is \emph{reduced} if it is in bijection to the minimal generating set of $\ini(I)$, all its elements have $1$ as leading coefficient, and the tail of every polynomial in it consists of monomials that are not in the initial ideal.

We fix our notation used throughout this note. We define $[n] = \{1, \ldots, n\}$. For a subset $S = \{i_1, \ldots, i_s\} \subseteq [n]$, we denote by $x_S$ the monomial $x_{i_1} \cdots x_{i_s}$ in the polynomial ring $\kb[x_1, \ldots, x_n]$.

\subsection{Constructing the Gr\"obner basis elements}
In this section, we will prove that the polynomials $g_{A,n,k}$ do belong to the ideal $I_{n,k}$, establishing one part of Theorem~\ref{thm:GBasis}. An alternative description for the polynomials $g_{A,n,k}$ will also be deduced in Theorem~\ref{thm:f_description}. We will split the proofs into several lemmas. 

\begin{definition}\label{def:SFP}
    The \emph{squarefree part} of a polynomial $f\in \mathbf{k}[x_1,\ldots,x_n]$ is 
    its normal form with respect to the monomial ideal $(x_1^2,\ldots,x_n^2)$. We write $\mathrm{SFP}(f)$ for the squarefree part of $f$. For a subset of polynomials $P\subseteq \mathbf{k}[x_1,\ldots,x_n]$ we write $\mathrm{SFP}(P)=\{ \mathrm{SFP}(f)\mid f\in P \}$.
\end{definition}
In other words, the squarefree part of a polynomial is the part obtained by removing all terms that contain a square. For example, 
$$\mathrm{SFP}((x_1+\cdots+x_5)^2) = 2 (x_1 x_2 + x_1 x_3 + \cdots + x_4 x_5) = 2 e_2(x_1,\ldots,x_5),$$ where $e_2$ is the elementary symmetric polynomial of degree two.

Since $(x_1^2,\ldots,x_n^2)\subset I_{n,k}$ for all $n$ and $k$, the crucial part of the analysis of the homogeneous ideals $I_{n,k}$ is the degreewise description of their squarefree parts $\mathrm{SFP}((I_{n,k})_{(d)})$.

\begin{lemma}\label{lem:SFPofPolys_and_I_n_k}
    Let $f\in \mathbf{k}[x_1,\ldots,x_n]$ be a polynomial. Then $f-\mathrm{SFP}(f)\in I_{n,k}$. In particular, $f\in I_{n,k}$ if and only if $\mathrm{SFP}(f)\in I_{n,k}$.
\end{lemma}
\begin{proof}
    Observing that $(x_1^2,\ldots,x_n^2)\subseteq I_{n,k}$, we only need to apply Definition~\ref{def:SFP}.
\end{proof}

\begin{definition}\label{def:g_S_n_k}
    Let $S\subseteq [n]$ be an index set. We write 
\begin{equation}\label{eq:f_{S,n,k}}
       f_{S,n,k}=\frac{1}{k!}\mathrm{SFP}(x_S(x_1+\cdots +x_n)^k).
    \end{equation}
\end{definition}

Using the polynomials $f_{S,n,k}$, we obtain generating systems of the squarefree parts in single degrees of the ideals $I_{n,k}$.

\begin{lemma}\label{lem:GenSysDegreewiseSFP}
    \begin{enumerate}
        \item 
        Let $S\subseteq [n]
        $. Then 
        $f_{S,n,k}$ from Equation~\eqref{eq:f_{S,n,k}}
        is squarefree of degree $k+|S|$ and can be written as $$f_{S,n,k}=\sum_{\substack{u\; \mathrm{ squarefree}\\ 
            \deg(u)=k+|S| \\
            x_S \mid u}} u.$$
        \item 
        Let $k\leq d\leq n$. Then $\mathrm{SFP}((I_{n,k})_{(d)})$ is generated as a vector space by the polynomials $f_{S,n,k}$ with $|S|=d-k$.
    \end{enumerate}
\end{lemma}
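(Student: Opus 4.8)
The plan is to handle part~(1) by a direct multinomial expansion and part~(2) by leveraging the linearity of $\mathrm{SFP}$ against the given generators of $I_{n,k}$. The one fact I would record at the outset is that, being a normal form modulo a monomial ideal, $\mathrm{SFP}$ is $\mathbf{k}$-linear and acts on monomials by fixing the squarefree ones and killing the rest; in particular $\mathrm{SFP}(m)=0$ for every monomial $m$ divisible by some $x_i^2$. Throughout I will abbreviate $\ell = x_1+\cdots+x_n$.

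For~(1), I would expand $\ell^k=\sum_{|\alpha|=k}\binom{k}{\alpha}x^\alpha$ with multinomial coefficients $\binom{k}{\alpha}=k!/(\alpha_1!\cdots\alpha_n!)$, so that $x_S\ell^k=\sum_{|\alpha|=k}\binom{k}{\alpha}x_Sx^\alpha$. Applying $\mathrm{SFP}$ term by term, the term indexed by $\alpha$ survives precisely when $x_Sx^\alpha$ is squarefree; since $x_S$ is already squarefree this forces $\alpha$ to be the indicator vector of some $k$-subset $T\subseteq[n]$ with $T\cap S=\emptyset$, in which case $\binom{k}{\alpha}=k!$ and $x_Sx^\alpha=x_{S\cup T}$. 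Dividing the surviving sum by $k!$ gives $f_{S,n,k}=\sum_{T} x_{S\cup T}$, the sum over $k$-subsets $T$ of $[n]$ disjoint from $S$. To match the stated formula I would then invoke the bijection $U\mapsto U\setminus S$ between the squarefree monomials $x_U$ of degree $k+|S|$ divisible by $x_S$ and the $k$-subsets $T$ of $[n]$ disjoint from $S$ (under which $x_U=x_{S\cup T}$). This also makes the degree and squarefreeness claims immediate, with both sides equal to $0$ when $k+|S|>n$.

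For~(2), fix $k\le d\le n$. Using $I_{n,k}=(x_1^2,\dots,x_n^2,\ell^k)$, I would write an arbitrary element of $(I_{n,k})_{(d)}$ as $\sum_{i=1}^n h_ix_i^2+h_0\ell^k$ with $h_i$ homogeneous of degree $d-2$ and $h_0$ homogeneous of degree $d-k$ — this is where the hypothesis $d\ge k$ is used. Each $h_ix_i^2$ is divisible by $x_i^2$, so $\mathrm{SFP}$ annihilates it, and by linearity $\mathrm{SFP}$ of the element equals $\mathrm{SFP}(h_0\ell^k)$. Splitting $h_0=\sum_{|S|=d-k}c_Sx_S+(\text{terms divisible by some }x_i^2)$, the non-squarefree part again contributes $0$ after multiplication by $\ell^k$ and applying $\mathrm{SFP}$, while the squarefree part yields $\mathrm{SFP}(h_0\ell^k)=\sum_{|S|=d-k}c_S\,\mathrm{SFP}(x_S\ell^k)=k!\sum_{|S|=d-k}c_S f_{S,n,k}$ by Definition~\ref{def:g_S_n_k}. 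Hence $\mathrm{SFP}((I_{n,k})_{(d)})$ lies in the span of the $f_{S,n,k}$ with $|S|=d-k$; the reverse inclusion is clear since $x_S\ell^k\in(I_{n,k})_{(d)}$ gives $f_{S,n,k}=\tfrac{1}{k!}\mathrm{SFP}(x_S\ell^k)\in\mathrm{SFP}((I_{n,k})_{(d)})$. Linearity of $\mathrm{SFP}$ also guarantees that $\mathrm{SFP}((I_{n,k})_{(d)})$ is a subspace in the first place.

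I do not expect a genuine obstacle here; the content is careful bookkeeping. The two steps I would be most careful not to gloss over are the systematic use of $\mathrm{SFP}(m)=0$ for $m$ divisible by some $x_i^2$ — this is what simultaneously disposes of the $h_ix_i^2$ contributions and of the non-squarefree terms of $h_0$ — and the identity $\binom{k}{\alpha}=k!$ for a $0$-$1$ exponent vector $\alpha$ with exactly $k$ ones, which is precisely what makes the normalization $\tfrac{1}{k!}$ in Definition~\ref{def:g_S_n_k} produce $0,1$-coefficients. Everything else reduces to the two set bijections described above.
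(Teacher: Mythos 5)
Your proof is correct and follows essentially the same approach as the paper's: in part (1) you expand $\ell^k$, observe that only $0$-$1$ exponent vectors survive $\mathrm{SFP}$ with coefficient $k!$, and normalize; in part (2) you use the degree-$d$ decomposition of $(I_{n,k})_{(d)}$, note that $\mathrm{SFP}$ annihilates everything coming from the squares as well as the non-squarefree part of $h_0$, and conclude the span statement. The paper's write-up is terser (it computes $\mathrm{SFP}(\ell^k)$ once and observes that the $f_{S,n,k}$ arise from the squarefree monomial generators of $\mathrm{SFP}(\mathbf{k}[x]_{(d-k)})$), but the underlying decomposition and the two killing mechanisms are identical to yours.
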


\begin{proof}
(1) First observe that $\mathrm{SFP}((x_1+\cdots + x_n)^k)$ is $k!$ times the sum of all squarefree terms of degree $k$. Multiplying by $x_S$, applying $\mathrm{SFP}$ again, and dividing by $k!$, we obtain the desired expression.

    (2) By Definition~\ref{def:g_S_n_k} and item (1), we have $f_{S,n,k}\in \mathrm{SFP}((I_{n,k})_{(d)})$ if $|S|=d-k$. In order to show that these polynomials generate $\mathrm{SFP}((I_{n,k})_{(d)})$, first note that $\{(1/k!)x_S\mid x_S \;\mathrm{squarefree}\;\mathrm{and}\deg(x_S)=d-k\}$ generates $\mathrm{SFP}(\mathbf{k}[x_1,\ldots,x_n]_{(d-k)})$. So by Definition~\ref{def:g_S_n_k}, the vector space $V:=\mathrm{SFP}(\mathbf{k}[x_1,\ldots,x_n]_{(d-k)}\cdot (x_1+\cdots +x_n)^k)$ is generated by the $f_{S,n,k}$ with $|S|=d-k$. Now, since $\mathrm{SFP}((I_{n,k})_{(d)})=\mathrm{SFP}(((x_1+\cdots +x_n)^k)_{(d)})=V$, we are done.
\end{proof}

\begin{example}\label{ex:GenSysHomogPart}
    Consider the linear form $\ell=x_1+\cdots + x_5$ in $\mathbf{k}[x_1,x_2,x_3,x_4,x_5]$. We then have $I_{5,2}=(x_1^2,x_2^2,x_3^2,x_4^2,x_5^2,\ell^2)$. We obtain the generating system $\{f_{\{1\},n,k},\ldots,f_{\{5\},n,k}\}$ of $(I_{5,2})_{(3)}$ as a $\mathbf{k}$-vector space, where
    \begin{align*}
        f_{\{1\},n,k}&=\frac{1}{2}\mathrm{SFP}(x_1 \ell^2) = x_1 x_2 x_3 + x_1 x_2 x_4 + x_1 x_2 x_5 + x_1 x_3 x_4 + x_1 x_3 x_5 + x_1 x_4 x_5 \\&=x_1 e_2(x_2,x_3,x_4,x_5),\\
        f_{\{2\},n,k}&=\frac{1}{2}\mathrm{SFP}(x_2 \ell^2)=x_1 x_2 x_3 + x_1 x_2 x_4 + x_2 x_3 x_4 + x_1 x_2 x_5 + x_2 x_3 x_5 + x_2 x_4 x_5\\&=x_2 e_2(x_1,x_3,x_4,x_5),\\
        f_{\{3\},n,k}&=\frac{1}{2}\mathrm{SFP}(x_3 \ell^2)=x_1 x_2 x_3 + x_1 x_3 x_4 + x_2 x_3 x_4 + x_1 x_3 x_5 + x_2 x_3 x_5 + x_3 x_4 x_5\\&=x_3 e_2(x_1,x_2,x_4,x_5),\\
        f_{\{4\},n,k}&=\frac{1}{2}\mathrm{SFP}(x_4 \ell^2)=x_1 x_2 x_4 + x_1 x_3 x_4 + x_2 x_3 x_4 + x_1 x_4 x_5 + x_2 x_4 x_5 + x_3 x_4 x_5\\&=x_4 e_2(x_1,x_2,x_3,x_5),\\
        f_{\{5\},n,k}&=\frac{1}{2}\mathrm{SFP}(x_5 \ell^2)=x_1 x_2 x_5 + x_1 x_3 x_5 + x_2 x_3 x_5 + x_1 x_4 x_5 + x_2 x_4 x_5 + x_3 x_4 x_5 \\&=x_5 e_2(x_1,x_2,x_3,x_4).\\
    \end{align*}
\end{example}

We will show that the polynomials $g_{A,n,k}$ from Equation~\eqref{eq:g_{A,n,k}-intro} belong to the ideal $I_{n,k}$ by writing them as linear combinations of the basis polynomials $f_{S,n,k}$.
\begin{lemma}\label{lem:Combinations_g_S_n_k}
    Let $k\leq d\leq n$. For any $S\subseteq [n]$ with $|S|=d-k$ let $\lambda_S\in \mathbf{k}$ be a scalar. Then
    $$\sum_{|S|=d-k}\lambda_S f_{S,n,k}=\sum_{\substack{u\;\mathrm{squarefree}\\ \deg(u)=d}}\left(\sum_{\substack{x_S\mid u\\ |S|=d-k}} \lambda_S\right)u.$$
\end{lemma}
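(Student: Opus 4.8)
The plan is to reduce the claim to the explicit monomial expansion of $f_{S,n,k}$ furnished by Lemma~\ref{lem:GenSysDegreewiseSFP}(1) and then interchange the order of summation. Since $|S| = d-k$, we have $k + |S| = d$, so that lemma gives
\[
f_{S,n,k} = \sum_{\substack{u\ \text{squarefree}\\ \deg(u) = d\\ x_S \mid u}} u .
\]
Substituting this into the left-hand side of the asserted identity produces a finite double sum over pairs $(S,u)$ with $|S| = d-k$, $u$ squarefree of degree $d$, and $x_S \mid u$, each pair contributing $\lambda_S\, u$.

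The only content is the bookkeeping of this double sum, which I would phrase as a change in the order of grouping. Grouping the pairs $(S,u)$ first by $S$ recovers the left-hand side $\sum_{|S|=d-k}\lambda_S f_{S,n,k}$ verbatim. Grouping them first by $u$ instead gives $\sum_{u}\bigl(\sum_{x_S\mid u,\ |S|=d-k}\lambda_S\bigr)u$, where the sum ranges over squarefree $u$ of degree $d$; this is exactly the right-hand side. All sums are finite, so the interchange is unconditionally valid, and since distinct squarefree monomials are linearly independent in $\mathbf{k}[x_1,\dots,x_n]$ the two expressions represent the same polynomial. The point worth stating carefully is that, for a fixed squarefree $u$ of degree $d$, the sets $S$ with $|S|=d-k$ and $x_S\mid u$ are precisely the $(d-k)$-element subsets of $\operatorname{supp}(u)$ (a set of size $d$), so the inner sum $\sum_{x_S\mid u}\lambda_S$ is a well-defined finite sum and the regrouping is unambiguous.

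I expect no genuine obstacle here: the statement is a purely formal reindexing identity, and the argument above is essentially complete. If a fully coefficientwise presentation is preferred, one can equivalently observe that the coefficient of a squarefree monomial $u$ of degree $d$ on the left equals $\sum_{x_S\mid u,\ |S|=d-k}\lambda_S$ (each $f_{S,n,k}$ contributes $1$ exactly when $x_S\mid u$ and $0$ otherwise), while the coefficient of any non-squarefree monomial, or of any monomial of degree $\neq d$, vanishes on both sides; this matches the right-hand side term by term.
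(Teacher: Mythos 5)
Your argument is correct and is essentially the paper's proof: both reduce to the monomial expansion of $f_{S,n,k}$ from Lemma~\ref{lem:GenSysDegreewiseSFP}(1) and then compare the coefficient of each squarefree $u$ of degree $d$ on both sides (your double-sum interchange is the same bookkeeping phrased slightly differently, and you even spell out the coefficientwise version the paper uses at the end). Nothing further is needed.
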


\begin{proof}
    Write $f=\sum_{|S|=d-k}\lambda_S f_{S,n,k}$. As a $\mathbf{k}$-linear combination of squarefree polynomials of degree $d$, $f$ is also squarefree of degree $d$. By Lemma~\ref{lem:GenSysDegreewiseSFP}, the coefficient of a squarefree term $u$ of degree $d$ in the polynomial $f_{S,n,k}$ is $\lambda_S$ if $x_S\mid u$ and zero otherwise. Thus the coefficient of $u$ in $f$ is the sum of all $\lambda_S$ where $x_S\mid u$, as claimed.
\end{proof}

\begin{definition}\label{def:f_A_n_k}
    Let $k\leq d\leq n$, and $A=\{i_1,\ldots,i_d\}\subseteq [n]$ with $\max(A)\leq 2d-k$. 
    We define
\begin{equation}\label{eq:g}
g_{A,n,k}=\sum_{\substack{u\;\mathrm{squarefree}\\ \deg(u)=d\\ \mathrm{supp}(u)\cap (\{1,\ldots,2d-k\}\setminus A))=\emptyset}} u.
\end{equation}
\end{definition}

Note that this is the same definition for $g_{A,n,k}$ in terms of elementary symmetric polynomials as given in Equation~\eqref{eq:g_{A,n,k}-intro} 
since for any set $S\subseteq [n]$, we have that
\[
S\cap (\{1,\dots, 2d-k\}\setminus A) = \emptyset \iff S\cap \{1,\dots, 2d-k\} \subseteq A.
\]

\begin{example}\label{ex:f_A_n_k}
    For the values $n=5$, $k=2$, and $d=3$, consider the set $A=\{1,3,4\}$. Note that $\max(A)=4\leq 4=2\cdot d-k$. In this case, $g_{A,n,k}$ is given by
    $$e_3(x_1,x_3,x_4,x_5) = x_1 x_3 x_4 + x_1 x_3 x_5 + x_1 x_4 x_5 + x_3 x_4 x_5,$$
     the elementary symmetric polynomial $e_3(x_1,x_3,x_4,x_5)$ of degree $d$ supported on the variables indexed by $A\cup \{5\}=A\cup \big([n]\setminus \{1,\ldots,2d-k\} \big)$.
\end{example}

With this preparation, we are now ready to establish that $g_{A,n,k}\in I_{n,k}$, which is a major step needed for the proof of Theorem~\ref{thm:GBasis}.

\begin{proposition}\label{prop:ExplicitCoefficients_f_A_n_k} 
    Let $k\leq d\leq n$, and let $A\subseteq [n]$ with $\max(A)\leq 2d-k$. Write $\mathcal{T}_i=\{ S\subseteq [n]: |S|=d-k, \text{ and } |S\cap (\{1,\dots, 2d-k\}\setminus A)|=i \}$. Then we have
    $$g_{A,n,k}=\sum_{i=0}^{d-k} \lambda_i \left(\sum_{S\in\mathcal{T}_i} f_{S,n,k}\right)$$
    for some coefficients $\lambda_0,\ldots,\lambda_{d-k}\in \mathbf{k}$. In particular, $g_{A,n,k}\in I_{n,k}$.
\end{proposition}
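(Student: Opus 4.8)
The plan is to reduce the statement to a single coefficient comparison followed by a short linear-algebra argument. First I would note that each $f_{S,n,k}$ already lies in $I_{n,k}$: since $x_S(x_1+\cdots+x_n)^k\in I_{n,k}$, Lemma~\ref{lem:SFPofPolys_and_I_n_k} gives $\mathrm{SFP}(x_S(x_1+\cdots+x_n)^k)\in I_{n,k}$, hence $f_{S,n,k}\in I_{n,k}$. Therefore it is enough to exhibit scalars $\lambda_0,\dots,\lambda_{d-k}\in\mathbf{k}$ realizing the claimed identity $g_{A,n,k}=\sum_{i=0}^{d-k}\lambda_i\bigl(\sum_{S\in\mathcal{T}_i}f_{S,n,k}\bigr)$; the membership $g_{A,n,k}\in I_{n,k}$ then follows at once, since it is a $\mathbf{k}$-linear combination of the $f_{S,n,k}$.

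To find the $\lambda_i$, set $B=\{1,\dots,2d-k\}\setminus A$, so that $B\subseteq[n]$ with $|B|=d-k$ (using $|A|=d$ and $\max(A)\le 2d-k$), and recall from Definition~\ref{def:f_A_n_k} that $g_{A,n,k}$ is the sum of all squarefree monomials $u$ of degree $d$ with $\mathrm{supp}(u)\cap B=\emptyset$. Fix such a $u$ and put $j=|\mathrm{supp}(u)\cap B|\in\{0,\dots,d-k\}$. The number of sets $S$ with $x_S\mid u$, $|S|=d-k$, and $|S\cap B|=i$ is $\binom{j}{i}\binom{d-j}{d-k-i}$, so by Lemma~\ref{lem:Combinations_g_S_n_k} the coefficient of $u$ in $\sum_{i}\lambda_i\sum_{S\in\mathcal{T}_i}f_{S,n,k}$ equals $\sum_{i=0}^{d-k}\lambda_i\binom{j}{i}\binom{d-j}{d-k-i}$, whereas its coefficient in $g_{A,n,k}$ is $\delta_{j,0}$. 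Since every attainable value of $j$ satisfies $j\le|B|=d-k$, it suffices to solve the square system
$$\sum_{i=0}^{d-k}\lambda_i\binom{j}{i}\binom{d-j}{d-k-i}=\delta_{j,0},\qquad j=0,1,\dots,d-k.$$

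Finally I would observe that this system is always solvable over $\mathbf{k}$: its coefficient matrix $M=\bigl(\binom{j}{i}\binom{d-j}{d-k-i}\bigr)_{0\le j,i\le d-k}$ is lower triangular, because $\binom{j}{i}=0$ for $i>j$, with diagonal entries $M_{jj}=\binom{d-j}{d-k-j}$, which are positive integers for $0\le j\le d-k$ (here $0\le d-k-j\le d-j$). As $\mathbf{k}$ has characteristic zero, $M$ is invertible over $\mathbf{k}$, so the $\lambda_i$ exist and are unique; in fact inclusion--exclusion over the subsets of $B$, together with the identity $\sum_{S\supseteq T,\,|S|=d-k}f_{S,n,k}=\binom{d-|T|}{d-k-|T|}\sum_{u\supseteq T}u$, yields the explicit values $\lambda_i=\sum_{t=0}^{i}(-1)^t\binom{i}{t}\big/\binom{d-t}{d-k-t}$. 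This proves the displayed identity, and with it $g_{A,n,k}\in I_{n,k}$. There is no serious obstacle in the argument: the only points requiring a little care are the combinatorial count of the subsets $S$ contributing to a given monomial $u$, and the observation that $M$ is triangular so that the linear system decouples.
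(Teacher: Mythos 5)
Your proposal is correct and follows essentially the same route as the paper's proof: you reduce the identity to coefficient comparisons indexed by $j=|\mathrm{supp}(u)\cap B|$, note the counting formula $\binom{j}{i}\binom{d-j}{d-k-i}$ for the contributing sets $S$, and solve the resulting lower-triangular system with nonvanishing diagonal $\binom{d-j}{d-k-j}$. The closing inclusion--exclusion formula for $\lambda_i$ is an extra you don't need for the proposition itself, but it does agree with the closed form the paper derives separately in Lemma~\ref{lem:BinomialCoeffsAlternatingSums}.
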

\begin{proof}
Let $C=\{1,\ldots,2d-k\}\setminus A$. Write 
    $$f=\sum_{i=0}^{d-k} \lambda_i \left(\sum_{S\in\mathcal{T}_i} f_{S,n,k}\right).$$
    By construction, $f$ is a sum of squarefree monomials of degree $d$. Let $u$ be a squarefree term with $\deg(u)=d$. By Lemma~\ref{lem:Combinations_g_S_n_k}, its coefficient in $f$ is $\sum_{i=0}^{d-k}  |\{S\in\mathcal{T}_i : x_S\mid u \}|\cdot \lambda_i$. On the other hand, the coefficient of $u$ in $g_{A,n,k}$ is $1$  if $|\mathrm{supp}(u)\cap C|=0$, and zero otherwise.
    Thus, $f=g_{A,n,k}$ holds if and only if the coefficients $\lambda_i$ solve the inhomogeneous linear system of $\binom{n}{d}$ equations (labeled by the terms $u$)
    \begin{equation}\label{eq:InhomogLinSys_1}
        \sum_{i=0}^{d-k}  |\{S\in\mathcal{T}_i : x_S\mid u \}|\cdot \lambda_i=\begin{cases}
        1, & \mathrm{if}\; |\mathrm{supp}(u)\cap C|=0 \\
        0, & \mathrm{if}\; |\mathrm{supp}(u)\cap C|>0.
        \end{cases}
    \end{equation}
    Note that if $|\mathrm{supp}(u)\cap C|=j$ and $S\in \mathcal{T}_i$ with $i>j$, we have $x_S\nmid u$ by the definitions of $\mathcal{T}_i$ and $C$ because $|\mathrm{supp}(x_S)\cap C|=i$. Thus, we can rewrite the system~\eqref{eq:InhomogLinSys_1} as 
    \begin{equation}\label{eq:InhomogLinSys_2}
        \sum_{i=0}^{j}  |\{S\in\mathcal{T}_i : x_S\mid u \}|\cdot \lambda_i=\begin{cases}
        1, & \mathrm{if}\; |\mathrm{supp}(u)\cap C|=0 \\
        0, & \mathrm{if}\; |\mathrm{supp}(u)\cap C|=j>0.
        \end{cases}
    \end{equation}
    We now claim that the system~\eqref{eq:InhomogLinSys_2} contains only $d-k+1$ distinct equations. First, it is easy to see that for $|\mathrm{supp}(u)\cap C|=0$ we have $|\{S\in\mathcal{T}_0 : x_S\mid u \}|=\binom{d}{d-k}$ independently of $u$. Now, consider $u$ with $|\mathrm{supp}(u)\cap C|=j>0$. Note that $|\mathrm{supp}(u)\setminus C|=d-j$. Furthermore, consider $S\in\mathcal{T}_i$ with $0\leq i\leq j$. Then $|S\cap C|=i$ and $|S\setminus C|=d-k-i$. The condition $x_S\mid u$ is then equivalent to satisfying both $S\cap C\subseteq \mathrm{supp}(u)\cap C$ and $S\setminus C\subseteq \mathrm{supp}(u)\setminus C$. The number of sets $S\in\mathcal{T}_i$ that fulfill this is $\binom{j}{i}\binom{d-j}{d-k-i}$ independently of $u$.

    The system~\eqref{eq:InhomogLinSys_2} now simplifies to 
    \begin{equation}\label{eq:InhomogLinSys_3}
        \left\{\begin{array}{rl}
             \binom{d}{d-k} \lambda_0  & =1 \\
            \sum_{i=0}^1\binom{1}{i}\binom{d-1}{d-k-i} \lambda_i & =0 \\
             \vdots & \\
             \sum_{i=0}^{d-k}\binom{d-k}{i}\binom{k}{d-k-i} \lambda_i & =0
        \end{array}\right. .
    \end{equation}
    This is a lower triangular system with diagonal elements $\binom{d-j}{d-k-j}=\binom{d-j}{k}\neq 0$ (recall that $k\leq d$ and $d-j\geq d-(d-k)=k$). Hence the system has a unique solution and $g_{A,n,k}$ can be written as desired.
\end{proof}

In the setting of Proposition~\ref{prop:ExplicitCoefficients_f_A_n_k}, we have the following corollary.
\begin{corollary}\label{cor:SpecialCase_n=2d-k}
   If $n=2d-k$, then the set $\{f_{S,n,k}: S\subseteq[n],|S|=d-k\}$ is a vector space basis of $\mathrm{SFP}(K[x_1,\ldots,x_n]_{(d)})=\mathrm{SFP}((I_{n,k})_{(d)})$. 
\end{corollary}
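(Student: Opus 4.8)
The plan is to match cardinalities and then prove spanning; a spanning set of the right size is automatically a basis. First I would record the numerology. The indexing sets $S\subseteq[n]$ with $|S|=d-k$ number $\binom{n}{d-k}$, while $\mathrm{SFP}(\mathbf{k}[x_1,\ldots,x_n]_{(d)})$ has dimension $\binom{n}{d}$, a basis being the squarefree monomials of degree $d$. Since $n=2d-k$ forces $n-d=d-k$, these two binomial coefficients coincide. By Lemma~\ref{lem:GenSysDegreewiseSFP}(2) the polynomials $f_{S,n,k}$ with $|S|=d-k$ span $\mathrm{SFP}((I_{n,k})_{(d)})$, which sits inside $\mathrm{SFP}(\mathbf{k}[x_1,\ldots,x_n]_{(d)})$; so it suffices to show this inclusion is an equality, and then the generating set, having cardinality equal to the dimension, must be a basis.

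The key step is the observation that, when $n=2d-k$, the polynomial $g_{A,n,k}$ of Definition~\ref{def:f_A_n_k} degenerates to a single monomial. For any $A\subseteq[n]$ with $|A|=d$ the constraint $\max(A)\le 2d-k=n$ is vacuous, so $g_{A,n,k}$ is defined, and the set $C=\{1,\ldots,2d-k\}\setminus A=[n]\setminus A$ has size $n-d=d-k$. A squarefree monomial $u$ of degree $d$ with $\mathrm{supp}(u)\cap C=\emptyset$ must satisfy $\mathrm{supp}(u)\subseteq A$, and since $|A|=d$ the only possibility is $u=x_A$. Hence $g_{A,n,k}=x_A$ in this case.

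Now Proposition~\ref{prop:ExplicitCoefficients_f_A_n_k} applies to every such $A$ and yields $x_A=g_{A,n,k}\in I_{n,k}$; being already squarefree, $x_A=\mathrm{SFP}(x_A)\in\mathrm{SFP}((I_{n,k})_{(d)})$. As $A$ ranges over all $d$-subsets of $[n]$, the monomials $x_A$ exhaust a basis of $\mathrm{SFP}(\mathbf{k}[x_1,\ldots,x_n]_{(d)})$, so $\mathrm{SFP}((I_{n,k})_{(d)})=\mathrm{SFP}(\mathbf{k}[x_1,\ldots,x_n]_{(d)})$. Combining this with Lemma~\ref{lem:GenSysDegreewiseSFP}(2) and the cardinality count from the first paragraph gives that $\{f_{S,n,k}:|S|=d-k\}$ is a basis.

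I do not foresee a genuine obstacle here: the only points to watch are that the standing hypothesis $k\le d$ (hence $d\le n$) is what makes the cited results applicable, and that the degeneration $g_{A,n,k}=x_A$ be spelled out carefully, since Definition~\ref{def:f_A_n_k} is phrased for general $A$ with $|A|=d$ and the collapse to one monomial is special to $n=2d-k$.
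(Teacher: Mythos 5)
Your proof is correct and takes essentially the same route as the paper: use the degeneration $g_{A,n,k}=x_A$ when $n=2d-k$ (together with Proposition~\ref{prop:ExplicitCoefficients_f_A_n_k}) to show $\mathrm{SFP}((I_{n,k})_{(d)})$ equals the whole squarefree degree-$d$ space, then match cardinality of the spanning set from Lemma~\ref{lem:GenSysDegreewiseSFP}(2) against $\dim V=\binom{n}{d}$ via $\binom{n}{d-k}=\binom{n}{d}$. The paper states the degeneration to a monomial without justification; you spell it out, which is a welcome bit of extra care.
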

\begin{proof}
    Recall the notation $A=\{i_1,\ldots,i_d\}$. Since $g_{A,n,k}=x_{i_1}\cdots x_{i_d}\in I_{n,k}$, the degree $d$ part $\mathrm{SFP}((I_{n,k})_{(d)})$ contains a vector space basis of $V=\mathrm{SFP}(K[x_1,\ldots,x_n]_{(d)})$. The dimension of $V$ is $\binom{n}{d}$. Moreover, by Lemma~\ref{lem:GenSysDegreewiseSFP}, the set $\mathcal{B}:=\{f_{S,n,k}: S\subseteq[n],|S|=d-k\}$ generates $V$. The statement now follows from $|\mathcal{B}|=\binom{n}{d-k}=\binom{n}{n-d+k}=\binom{n}{d}=\dim(V)$.
\end{proof}

\begin{lemma}\label{lem:BinomialCoeffsAlternatingSums}
    The unique solution of the inhomogeneous system of linear equations~\eqref{eq:InhomogLinSys_3} is $\lambda_i=(-1)^{i}\frac{ k }{ (k+i)\binom{d}{k+i} }$ for $0\leq i \leq d-k$.
\end{lemma}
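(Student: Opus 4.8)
\emph{Proof proposal.} The plan is to exploit the fact that system~\eqref{eq:InhomogLinSys_3} has already been shown, in the proof of Proposition~\ref{prop:ExplicitCoefficients_f_A_n_k}, to be lower triangular with nonzero diagonal and hence to possess a \emph{unique} solution. Consequently it is enough to check that the proposed values $\lambda_i=(-1)^i\frac{k}{(k+i)\binom{d}{k+i}}$ satisfy every equation of~\eqref{eq:InhomogLinSys_3}. Indexing the equations by $j=0,1,\dots,d-k$ as written there, the $j$-th one asserts that $\sum_{i=0}^{j}\binom{j}{i}\binom{d-j}{d-k-i}\lambda_i$ equals $1$ when $j=0$ and $0$ otherwise, so the task reduces to evaluating the alternating sum
\[
\Sigma_j:=\sum_{i=0}^{j}(-1)^i\binom{j}{i}\binom{d-j}{d-k-i}\,\frac{k}{(k+i)\binom{d}{k+i}}.
\]

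First I would rewrite the summand in factorials. Using $\frac{k}{(k+i)\binom{d}{k+i}}=\frac{k\,(k+i-1)!\,(d-k-i)!}{d!}$, the factor $(d-k-i)!$ cancels the corresponding factor in $\binom{d-j}{d-k-i}=\frac{(d-j)!}{(d-k-i)!\,(k-j+i)!}$, and pulling out everything independent of $i$ gives
\[
\Sigma_j=\frac{k\,j!\,(d-j)!}{d!}\sum_{i=0}^{j}(-1)^i\frac{(k+i-1)!}{i!\,(j-i)!\,(k-j+i)!}.
\]
For $j=0$ only the term $i=0$ survives and one gets $\Sigma_0=\frac{k\,d!}{d!}\cdot\frac{(k-1)!}{k!}=1$, as needed. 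For $j\ge 1$ I would use $\frac{1}{i!\,(j-i)!}=\frac{1}{j!}\binom{j}{i}$ together with the observation that
\[
\frac{(k+i-1)!}{(k-j+i)!}=(k-j+i+1)(k-j+i+2)\cdots(k+i-1)
\]
is a product of $j-1$ consecutive integers, hence a polynomial $P(i)$ in $i$ of degree $j-1$. Therefore $\sum_{i=0}^{j}(-1)^i\frac{(k+i-1)!}{i!\,(j-i)!\,(k-j+i)!}=\frac{1}{j!}\sum_{i=0}^{j}(-1)^i\binom{j}{i}P(i)=0$, since the $j$-th order finite difference of a polynomial of degree $<j$ vanishes; thus $\Sigma_j=0$ for $j\ge 1$. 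This matches the right-hand side of~\eqref{eq:InhomogLinSys_3}, and uniqueness of the solution then completes the proof.

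The step I would be most careful about is the bookkeeping of factorials of negative integers: when $j>k$, certain summands of the original system vanish only by the convention $1/m!=0$ for $m<0$, and I must make sure the passage to the polynomial $P(i)$ is consistent in that range. This is automatic, because $k-j+i<0$ forces the integer $0$ to occur among the factors $k-j+i+1,\dots,k+i-1$, so $P(i)=0$ there as well; and since $k\ge 2$ and $i\ge 0$, the expression $\frac{(k+i-1)!}{(k-j+i)!}$ never produces a genuine $\infty/\infty$ ambiguity. Apart from this point and the routine factorial manipulations indicated above, there is no real obstacle.
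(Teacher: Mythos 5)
Your proposal is correct and follows essentially the same route as the paper: verify the formula by plugging it into each equation of the lower-triangular system, rewrite the $j$-th sum in a form where the summand contains a polynomial factor of degree $j-1$ (you take $P(i)=(k-j+i+1)\cdots(k+i-1)$, the paper takes $\binom{k+i-1}{j-1}$ as a polynomial in $k$ — the same object up to a nonzero constant), and then invoke the vanishing of the $j$-th finite difference of a polynomial of degree less than $j$. Your extra remark about the consistency of the $1/m!=0$ convention for $m<0$ with $P(i)=0$ is a correct and worthwhile observation, but it is the only (minor) point where the two writeups diverge.
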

\begin{proof}
For $i=0$, the proposed $\lambda_0=\binom{d}{k}^{-1}=\binom{d}{d-k}^{-1}$ satisfies the first equation in \eqref{eq:InhomogLinSys_3} as desired. Pick now any $j$ with $1\leq j \leq d-k$. Inserting our suggested formula for $\lambda_i$ into the equations in \eqref{eq:InhomogLinSys_3}, we must show that
\begin{equation}
\label{eq:explicit_coeff_1}
\sum_{i=0}^{j}(-1)^i\binom{j}{i}\binom{d-j}{d-k-i}\frac{k}{(k+i)\binom{d}{k+i}} = 0.
\end{equation}
Rewriting the left hand side of \eqref{eq:explicit_coeff_1} gives the equivalent equation
\begin{equation}
\label{eq:explicit_coeff_2}
\frac{k}{j\binom{d}{j}}\sum_{i=0}^{j}(-1)^i\binom{j}{i}\binom{k+i-1}{j-1} = 0.
\end{equation}
To prove that this is true, we will use the notion of finite differences. Let $\Delta f(x)=f(x+1)-f(x)$ be the first finite difference of $f$ and 
\[
\Delta^n(f(x))=\Delta(\Delta^{n-1}f(x)) = \sum_{i=0}^n(-1)^{n-i}\binom{n}{i}f(x+i)
\]
be the $n$-th finite difference of $f$. Using this notation, we see that equation \eqref{eq:explicit_coeff_2} can be expressed as
\[
\frac{k(-1)^j}{j\binom{d}{j}}\Delta^j(f(k))=0
\]
for $f(k)=\binom{k-1}{j-1}$. But as $f(k)$ is a polynomial in $k$, of degree $j-1$, and $\Delta$ takes a polynomial of degree $j$ to one of degree at most $j-1$, we get that $\Delta^j(f(k))=0$ and the equality is proven.
\end{proof}

We are now ready to give an explicit description of the elements $g_{A,n,k}$ in terms of the generators of $I_{n,k}$.

\begin{theorem}
\label{thm:f_description}
Let $f_{S,n,k}\in I_{n,k}$ for $S\subseteq [n]$ be the squarefree part of the polynomial $x_S(x_1+\cdots+x_n)^k$. Then the elements $g_{A,n,k}$ can be written as
$$g_{A,n,k}=\sum_{i=0}^{d-k} (-1)^{i}\frac{ k }{ (k+i)\binom{d}{k+i} } \sum_{S\in\mathcal{T}_i(A)} f_{S,n,k}$$
where $d=|A|$ and
$$\mathcal{T}_i(A)=\{S\subseteq [n]: |S|=d-k \text{ and }|S\cap\{1,\ldots,2d-k\}\setminus A| =i \}.$$
\end{theorem}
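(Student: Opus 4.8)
The plan is to derive this theorem directly from Proposition~\ref{prop:ExplicitCoefficients_f_A_n_k} and Lemma~\ref{lem:BinomialCoeffsAlternatingSums}, which together have already done all the work. I would fix $k\leq d\leq n$ and $A\subseteq[n]$ with $|A|=d$ and $\max(A)\leq 2d-k$, and set $C=\{1,\ldots,2d-k\}\setminus A$ as in the proof of Proposition~\ref{prop:ExplicitCoefficients_f_A_n_k}. The first point to note is that the family $\mathcal{T}_i$ appearing in that proposition is literally the family $\mathcal{T}_i(A)$ of the present statement, so there is no discrepancy in notation to resolve. Proposition~\ref{prop:ExplicitCoefficients_f_A_n_k} then gives
$$g_{A,n,k}=\sum_{i=0}^{d-k}\lambda_i\sum_{S\in\mathcal{T}_i(A)}f_{S,n,k},$$
where $(\lambda_0,\ldots,\lambda_{d-k})$ is the unique solution of the lower triangular system~\eqref{eq:InhomogLinSys_3}, and in particular each $g_{A,n,k}$ lies in $I_{n,k}$.

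The second and final step is to substitute the explicit solution. Lemma~\ref{lem:BinomialCoeffsAlternatingSums} identifies the unique solution of~\eqref{eq:InhomogLinSys_3} as $\lambda_i=(-1)^i\frac{k}{(k+i)\binom{d}{k+i}}$ for $0\leq i\leq d-k$; these numbers are well defined since $1\leq k+i\leq d$ for all such $i$, so the binomial coefficients in the denominators are nonzero. Plugging this into the displayed expression for $g_{A,n,k}$ yields exactly the claimed formula, and the membership $f_{S,n,k}\in I_{n,k}$ (hence $g_{A,n,k}\in I_{n,k}$) has already been recorded in Lemma~\ref{lem:GenSysDegreewiseSFP} and Proposition~\ref{prop:ExplicitCoefficients_f_A_n_k}.

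Since both ingredients are already proven, there is no genuine obstacle left; the only points needing a moment's care are the notational identification $\mathcal{T}_i=\mathcal{T}_i(A)$ and the observation that the rational coefficients are defined. If one preferred a self-contained argument, one could instead verify the proposed identity directly: by Lemma~\ref{lem:Combinations_g_S_n_k} the coefficient of a squarefree degree-$d$ monomial $u$ on the right-hand side is a sum of products of binomial coefficients depending only on $|\mathrm{supp}(u)\cap C|$, and one reduces, exactly as in the proof of Proposition~\ref{prop:ExplicitCoefficients_f_A_n_k}, to the finite-difference identity of Lemma~\ref{lem:BinomialCoeffsAlternatingSums}. This merely retraces the earlier proofs, so the two-step assembly above is the cleaner route.
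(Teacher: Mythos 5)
Your proposal is correct and takes exactly the same route as the paper: the paper's proof is literally the one-line statement that the result follows by combining Proposition~\ref{prop:ExplicitCoefficients_f_A_n_k} with Lemma~\ref{lem:BinomialCoeffsAlternatingSums}. You have simply spelled out the two-step assembly in more detail, including the harmless notational identification $\mathcal{T}_i=\mathcal{T}_i(A)$ and the nonvanishing of the denominators.
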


\begin{proof}
The proof follows directly by combining Lemma~\ref{lem:BinomialCoeffsAlternatingSums} with Proposition~\ref{prop:ExplicitCoefficients_f_A_n_k}.
\end{proof}
 We illustrate Theorem~\ref{thm:f_description} with an example.
 \begin{example}\label{ex:InhomogSystem_n=5_d=3_k=2}
    Let $n=5$, $k=2$, and $d=3$. Moreover, let $A=\{1,3,4\}$. Then $g_{A,n,k}=x_1 x_3 x_4 + x_1 x_3 x_5 + x_1 x_4 x_5 + x_3 x_4 x_5$. Since $d-k=1$, we look at the set $\{ f_{\{1\},n,k},\ldots, f_{\{5\},n,k}\}\subset (I_{n,k})_{(3)}$, as described in Example~\ref{ex:GenSysHomogPart}. 
     Since $2d-k=4$, we get $C=\{1,\ldots,4\}\setminus A=\{2\}$. Moreover, $\mathcal{T}_0=\{ \{1\},\{3\},\{4\},\{5\} \}$ and $\mathcal{T}_1=\{ \{2\} \}$. We obtain an inhomogeneous linear system of equations
    \begin{equation}\label{eq:InhomogLinSys_example}
        \left\{\begin{array}{ll}
             \binom{3}{1} \lambda_0  & =1 \\[1em]
             \binom{2}{1} \lambda_0 + \binom{2}{0} \lambda_1 & =0
        \end{array}\right. ,
    \end{equation}
    whose solution is clearly $\lambda_0=\frac{1}{3}$, $\lambda_1=-\frac{2}{3}$. On the other hand, one can easily verify that $\lambda_0=(-1)^{0}\frac{ 2 }{ 2\binom{3}{2} }$ and $\lambda_1=(-1)^{1}\frac{ 2 }{ 3\binom{3}{3} }$ as stated in Lemma~\ref{lem:BinomialCoeffsAlternatingSums}, and that 
    $$g_{A,n,k}=\frac{1}{3}\left(f_{\{1\},n,k}+f_{\{3\},n,k}+f_{\{4\},n,k}+f_{\{5\},n,k}\right)-\frac{2}{3}f_{\{2\},n,k},$$
    as stated in Theorem~\ref{thm:f_description}.
\end{example}

\subsection{Hilbert series via Lattice paths}

In what follows, we will prove that the reduced degree reverse lexicographic Gr\"{o}bner basis of $I_{n,k}$ is the union of $\{x_1^2,\ldots,x_n^2\}$ with the set of $g_{A,n,k}$ whose leading term is minimal with respect to division. We will need a counting argument, and, to this end, we introduce a certain type of lattice paths.

\begin{definition}\label{def:LatticePaths}
 An \emph{$(N,E)$-lattice path} is a path on the lattice $\mathbb{Z}^2$ that begins at $(0,0)$ and consists only of northward steps (in the direction $(0,1)$, denoted $N$) and eastward steps (in the direction $(1,0)$, denoted $E$).

There exists a bijection $\tau$ that maps an $(N,E)$-lattice path of length $n$ to the squarefree monomial $\prod_{j \in J} x_j$, where the subset $J \subseteq [n]$ contains an index $j$ if and only if the $j$-th step in the path is north.
\end{definition}

The lattice path associated to $\ini(g_{\{1,3,4\},5,2})=x_1 x_3 x_4$ is illustrated in Figure~\ref{fig:LatticePath_134_n=5_k=2}.
\begin{figure}[h!]
    \centering
    \begin{tikzpicture}
        \draw[step=1cm,gray!30,thin] (0,0) grid (5,5);
        \draw[thick, red, opacity=0.7] (0,2) -- (2.5,4.5) node[above] {$y = x + 2$};

        \draw[thick, black] (0,0) -- (0,1) -- (1,1) -- (1,3) -- (2,3);

        \node[below left] at (0,0) {$(0,0)$};
        \node[above right] at (1,1) {Path $\tau^{-1}(\text{in}(g_{\{1,3,4\},5,2}))$};
    \end{tikzpicture}
    
    \caption{The $(N,E)$-lattice path associated to $x_1 x_3 x_4$.}
    \label{fig:LatticePath_134_n=5_k=2}
\end{figure}

\begin{lemma}\label{lem:LatticePathBijections}
	Let $k\leq d\leq n$. Consider polynomials $g_{A,n,k}$ for the sets $A=\{i_1,\ldots,i_d\}\subseteq[n]$ with $\max(A)\leq 2d-k$ as in \eqref{eq:g}.  
 Then, we have that 
    \begin{enumerate}
	   \item $\ini(g_{A,n,k})=x_{i_1}\cdots x_{i_d}$,
	   \item the $(N,E)$-lattice path $\tau^{-1}(\ini(g_{A,n,k}))$ touches the line $y=x+k$,
	   \item if $P$ is an $(N,E)$-lattice path of length $n$ that intersects the line $y=x+k$, then there exists a polynomial $g_{A,n,k} \in I_{n,k}$ such that $\ini(g_{A,n,k})$ divides $\tau(P)$.
    \end{enumerate}
\end{lemma}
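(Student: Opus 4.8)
The plan is to translate all three claims into combinatorics of support sets and then read them off. Throughout, for a lattice path $P$ of length $n$ let $T\subseteq[n]$ be the set of indices of its North steps, so $\tau(P)=x_T$; after $j$ steps $P$ sits at $(j-a_j,a_j)$ with $a_j:=|T\cap[j]|$. The observation I would record first is the dictionary: $P$ meets the line $y=x+k$ at step $j$ if and only if $a_j-(j-a_j)=k$, equivalently $j=2\,|T\cap[j]|-k$ (in particular $j+k$ is even).

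For part (1) I would argue directly in the degree reverse lexicographic order with $x_1\succ\cdots\succ x_n$. Writing $d=|A|$, the hypothesis $\max(A)\le 2d-k$ together with $|A|=d$ forces $A\subseteq\{1,\dots,2d-k\}$, so $x_A$ is among the monomials of $g_{A,n,k}$ (namely the $x_S$ with $|S|=d$ and $S\cap\{1,\dots,2d-k\}\subseteq A$). For any other such $S$ one has $\emptyset\ne S\setminus A\subseteq\{2d-k+1,\dots,n\}$ and $\emptyset\ne A\setminus S\subseteq\{1,\dots,2d-k\}$, hence $\max(S\triangle A)\in S\setminus A$, which is exactly the condition making $x_A$ the larger monomial in degrevlex. (Alternatively one may cite the identity $g_{A,n,k}=e_d$ in the variables indexed by $A\cup\{2d-k+1,\dots,n\}$ together with the standard fact that $\ini(e_d)$ is the product of its $d$ largest variables; since $A$ consists of the $d$ smallest of those indices, this again gives $\ini(g_{A,n,k})=x_A$.)

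For part (2), by part (1) the relevant path has North steps exactly at the positions of $A$, so $a_j=|A\cap[j]|$, and the integer quantity $2a_j-j$ equals $0$ at $j=0$ and equals $2d-\max(A)\ge 2d-(2d-k)=k$ at $j=\max(A)$; since it changes by exactly $\pm1$ at each step and $k\ge 1>0$, the discrete intermediate value theorem produces a step at which $2a_j-j=k$, i.e. $\tau^{-1}(\ini(g_{A,n,k}))$ touches $y=x+k$. For part (3), given a path $P$ meeting the line, I would pick a step $j_0\ge 1$ where it does so and set $A:=T\cap[j_0]$, $d:=|A|$. The dictionary gives $j_0=2d-k$, hence $\max(A)\le j_0=2d-k$; nonnegativity of the $x$-coordinate $j_0-d$ gives $d\ge k$, and $d\le|T|\le n$. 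So $A$ is admissible in the sense of Definition~\ref{def:f_A_n_k}, whence $g_{A,n,k}$ is defined and lies in $I_{n,k}$ by Proposition~\ref{prop:ExplicitCoefficients_f_A_n_k}; and $\ini(g_{A,n,k})=x_A$ divides $x_T=\tau(P)$ because $A\subseteq T$.

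None of the steps is difficult; the point requiring the most care is the bookkeeping in part (3) — verifying that the set $A$ extracted from the path genuinely satisfies $k\le|A|\le n$ and $\max(A)\le 2|A|-k$ so that Proposition~\ref{prop:ExplicitCoefficients_f_A_n_k} applies — together with keeping the orientation of the "touching" condition consistent (visiting $y=x+k$ at step $j$ is $j=2|T\cap[j]|-k$, not the reverse). I do not anticipate a genuine obstacle.
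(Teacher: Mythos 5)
Your proof is correct and follows essentially the same approach as the paper's, translating the claims into the dictionary between support sets and lattice paths. The only cosmetic differences are that in part (2) you invoke a discrete intermediate value argument for the quantity $2a_j - j$, whereas the paper simply observes that after exactly $2d-k$ steps the path lands on $(d-k,d)$, which lies on the line; and in part (3) you extract $A$ from an arbitrary touching step rather than the minimal one, which changes nothing.
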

\begin{proof}
(1) By the definition of $g_{A, n, k}$, the term $x_{i_1} \cdots x_{i_d}$ appears as one of the summands in $g_{A, n, k}$, and all other terms in its support must include at least one variable with an index greater than $\max(A)$.
       
(2) Since $\max(A) \leq 2d-k$, there are $d$ north steps and $d-k$ east steps among the first $2d-k$ steps in $\tau^{-1}(\ini(g_{A,n,k}))$. This difference of $k$ additional north steps forces the path to intersect the line $y=x+k$ after $2d-k$ steps.

(3) In a path $P$ that intersects the line $y=x+k$, there exists a minimal starting segment (of length at least $k$) that touches the line. Within this segment, there must be exactly $k$ more steps north than east. Let $\ell$ denote the length of this segment; then there exists a positive integer $d$ such that $\ell = 2d - k$. Consider the path $\tilde{P}$, which consists of the first $\ell$ steps of $P$ followed by $n - \ell$ steps east. By construction, there is a polynomial $g_{A,n,k} \in I_{n,k}$ such that $\ini(g_{A,n,k}) = \tau(\tilde{P})$. Since $\tau(\tilde{P})$ divides $\tau(P)$, the claim follows.
\end{proof}

We now proceed to examine the number of elements $g_{A,n,k}$ such that $A$ is minimal with respect to inclusion. Their leading terms minimally generate $\mathrm{SFP}(\ini(I_{n,k}))$ because strict inclusions $A_1\subset A_2$ translate to strict divisibilites $x_{A_1}=\ini{(g_{A_1,n,k})}\mid \ini{(g_{A_2,n,k})}=x_{A_2}$ and vice versa. We start with the following definitions.

\begin{definition}
Let $(a_i)_{i=0}^{\infty}$ be a sequence of integers. The $k$-fold \emph{self-convolution} of $(a_i)_{i=0}^{\infty}$, denoted $(a_i^k)_{i=0}^{\infty}$, is defined as the sequence of coefficients of the power series 
$\textstyle{\left(\sum_{i=0}^{\infty}a_it^i \right)^{k+1}
}$.
\end{definition}

One sequence that is central to our discussion is the Catalan numbers.
\begin{definition}
The \emph{Catalan numbers} $(C_n)_{n=0}^{\infty}$ are given by
$C_n = \binom{2n}{n} - \binom{2n}{n-1}$.
The $r$-th number of the Catalan $(k-1)$-fold \emph{convolution} is denoted by $C_r^{k-1}$. 
\end{definition}

\begin{corollary}\label{cor:ConvolutionCatalanNumbers}
The number of polynomials $g_{A,n,k}$ such that $A$ is minimal with respect to inclusion, and of degree $k+r$, is zero if $n < 2r + k$; otherwise, it is given by $C_r^{k-1}$. 
In particular, for $k=1$ and $k=2$, it is the $r$-th and $(r+1)$-th Catalan number, respectively.
\end{corollary}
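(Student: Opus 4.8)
The plan is to set up a bijection between the leading monomials $x_{i_1}\cdots x_{i_d}$ of the inclusion-minimal polynomials $g_{A,n,k}$ of degree $d = k+r$ and a class of lattice paths that is counted by the $(k-1)$-fold Catalan convolution. By Lemma~\ref{lem:LatticePathBijections}(1), the leading monomial of $g_{A,n,k}$ is $x_A$, and by part (2) of that lemma the corresponding path $\tau^{-1}(x_A)$ of length $n$ reaches the line $y = x+k$ exactly after its first $2d-k = 2r+k$ steps. I would first argue that $A$ being minimal with respect to inclusion is equivalent, via $\tau^{-1}$, to the path \emph{first} touching the line $y=x+k$ precisely at step $2r+k$ and never touching it before: indeed, the condition $\max(A)=2|A|-k$ in the definition of $\mathcal A$ says the last north step among the first $2r+k$ steps is at position $2r+k$, and inclusion-minimality rules out any smaller admissible prefix, which translates exactly into the path staying strictly below $y=x+k$ for the first $2r+k-1$ steps. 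After step $2r+k$ the remaining $n-(2r+k)$ steps are forced to be east (this is built into the definition of $g_{A,n,k}$ via the support condition), so the path is completely determined by its first $2r+k$ steps.

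So the count reduces to: the number of $(N,E)$-paths of length $2r+k$ that end on the line $y=x+k$ (i.e. use $r+k$ north and $r$ east steps) and touch that line for the first time at the final step. Equivalently, stripping the final (north) step, this is the number of paths with $r$ east and $r+k-1$ north steps that stay strictly below $y = x+k$, i.e. satisfy $(\text{north so far}) \le (\text{east so far}) + k - 1$ at every prefix. This is a classical ballot-type count: the number of such paths is the $r$-th term of the $(k-1)$-fold convolution of the Catalan numbers, $C_r^{k-1}$. I would justify this by the standard decomposition of a path constrained by a band of width $k-1$ into a concatenation of $k-1$ independent Dyck-path segments (equivalently, applying the cycle lemma / reflection argument $k-1$ times, or citing the generating-function identity $\big(\sum_i C_i t^i\big)^{k-1} = \sum_r C_r^{k-1} t^r$ together with the known formula for lattice paths in a strip). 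The vanishing statement for $n < 2r+k$ is immediate, since a path of length $n < 2r+k$ cannot contain $2r+k$ steps, so no admissible $A$ of size $k+r$ exists.

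The main obstacle I anticipate is not the enumeration itself — the reduction to counting constrained lattice paths is essentially forced by Lemma~\ref{lem:LatticePathBijections} — but pinning down precisely that \emph{inclusion-minimality of $A$} corresponds to \emph{first contact with the line at step $2r+k$}. One must check both directions carefully: that a non-minimal $A$ yields a path touching $y=x+k$ earlier (so it is counted at a smaller value of $r$), and conversely that minimality forbids earlier contact. This hinges on comparing the definition of the family $\mathcal A$ (subsets with $\max(A)=2|A|-k$, minimal under inclusion) with the "minimal starting segment touching the line" appearing in the proof of Lemma~\ref{lem:LatticePathBijections}(3); once that dictionary is made explicit, the remaining steps are the routine combinatorial identity $\big(\sum_{i\ge 0} C_i t^i\big)^{k-1} = \sum_{r\ge 0} C_r^{k-1} t^r$ applied to the band-constrained path count. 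For $k=1$ the band has width $0$, forcing a Dyck path of semilength $r$, giving $C_r$; for $k=2$ the band has width $1$, and a short bijection (prepend/append a step) shifts the index to give $C_{r+1}$, matching the final sentence of the statement.
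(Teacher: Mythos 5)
Your argument follows the same route as the paper's: use Lemma~\ref{lem:LatticePathBijections} to identify the inclusion-minimal $A$ of size $k+r$ with $(N,E)$-paths of length $2r+k$ that first touch $y=x+k$ at their final (north) step, note the vanishing for $n<2r+k$, strip the final step to reduce to paths with $r$ east and $r+k-1$ north steps staying weakly below $y=x+k-1$, and identify this count with $C_r^{k-1}$. The paper cites Tedford's enumeration for the last step; you propose to prove it directly by a Dyck-path decomposition. Your identification of inclusion-minimality of $A$ with ``first contact at step $2r+k$'' is correct (and you rightly observe that ``no earlier contact'' subsumes ``never exceed the line,'' since a lattice path cannot jump over the line without touching it).

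However, there is an off-by-one slip in the justification of the counting step, and it is worth flagging because your own sanity checks at the end contradict it. With the paper's convention that the $(k-1)$-fold self-convolution has generating function $\bigl(\sum_i C_i t^i\bigr)^{k}$, the identity you need is $\bigl(\sum_i C_i t^i\bigr)^{k}=\sum_r C_r^{k-1}t^r$, not exponent $k-1$; correspondingly, the last-passage decomposition of a path from the origin staying weakly below $y=x+k-1$ and ending on that line yields a concatenation of $k$ (not $k-1$) Dyck excursions, separated by the $k-1$ steps that achieve new minimal distances to the boundary. Your test cases make this visible: for $k=1$ the ``band'' has width $0$ and the constrained path \emph{is} a single Dyck path (one segment, not zero), giving $C_r$; for $k=2$ you need two Catalan factors to produce $\sum_i C_iC_{r-i}=C_{r+1}$, not one. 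Also, ``band'' is slightly misleading: the constraint is one-sided (weakly below $y=x+k-1$, unbounded below), which is exactly what the last-visit decomposition handles. With the exponent corrected to $k$ and the segment count to $k$, your direct argument replaces the paper's citation of Tedford cleanly and the rest of the proof goes through.
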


\begin{proof}
By Lemma~\ref{lem:LatticePathBijections}, the 
polynomials $g_{A,n,k}$ of degree $k+r$ are in bijection with $(N,E)$-lattice paths that take at most $n$ steps and touch the line $y=x+k$ exactly once, specifically at the last step after $k+r$ steps north. Since such a path comprises a total of $2r+k$ steps, there can be no such paths if $n < 2r+k$, resulting in no Gröbner basis elements of degree $r+k$. For $n \geq 2r+k$, we find that, disregarding the last step, these paths are also in bijection with paths that remain below the line $y=x+k-1$ and terminate on that same line after $k-1+r$ steps north.

Next, by \cite[Corollary 16]{Tedford}, $C_i^j$ represents the number of $(N,E)$-lattice paths that start at $(i,0)$, do not cross the line $y=x$, and end at $(i+j,i+j)$. By shifting all such paths left by $i$ steps, we can also interpret $C_i^j$ as counting $(N,E)$-paths starting at the origin that remain below the line $y=x+i$ and terminate at $(j,i+j)$. By combining this count with the previously established enumeration for the number of polynomials $g_{A,n,k}$ of degree $k+r$, we arrive at the complete enumeration. Specifically, for $k=1$ we get $C_r^0 = C_r$ polynomials in degree $r$ and for $k=2$, the claim follows from the well-known fact that $C_n^1=C_{n+1}$; 
see e.g.~\cite[Equation (1.1)]{Catalan_Stanley}.  
\end{proof}

The following lemma presents our main counting argument and serves as the final step needed to complete the main part of the proof of Theorem~\ref{thm:GBasis}.

\begin{lemma}\label{lem:CountingArgument}
Let $d$ be an integer with $0 \leq d \leq n$, and consider the terms of degree $d$ outside the degree reverse lexicographic initial ideal $\ini(I_{n,k})$. If $2d-k\geq n$, there are no such terms. Otherwise, these terms are squarefree, and their number is at most $\binom{n}{d} - \binom{n}{d - k}$.
\end{lemma}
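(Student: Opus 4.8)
The plan is to reduce the count to $\dim_{\kb}\mathrm{SFP}((I_{n,k})_{(d)})$ and then to read that dimension off a set-inclusion matrix. First I would note that any degree-$d$ monomial that is not squarefree is divisible by some $x_i^2$, hence lies in $\ini(I_{n,k})$ because $(x_1^2,\dots,x_n^2)\subseteq I_{n,k}$; so the degree-$d$ terms outside $\ini(I_{n,k})$ are automatically squarefree, and they are exactly the standard monomials of degree $d$, whose number is $\dim_{\kb}(R/\ini(I_{n,k}))_{(d)}=\dim_{\kb}(R/I_{n,k})_{(d)}$. Working in the squarefree algebra $A=R/(x_1^2,\dots,x_n^2)$ (so $R/I_{n,k}=A/I_{n,k}A$ and $\dim_{\kb}A_{(d)}=\binom{n}{d}$) and identifying $(I_{n,k}A)_{(d)}$ with $\mathrm{SFP}((I_{n,k})_{(d)})$ inside the span of squarefree degree-$d$ monomials via Lemma~\ref{lem:SFPofPolys_and_I_n_k}, I obtain
\[
\#\{\text{degree-}d\text{ terms outside }\ini(I_{n,k})\}=\binom{n}{d}-\dim_{\kb}\mathrm{SFP}((I_{n,k})_{(d)}).
\]
Thus the lemma amounts to: $\dim_{\kb}\mathrm{SFP}((I_{n,k})_{(d)})=\binom{n}{d}$ when $2d-k\ge n$, and $\dim_{\kb}\mathrm{SFP}((I_{n,k})_{(d)})\ge\binom{n}{d-k}$ when $2d-k<n$.

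For the first case I would use that $2d-k\ge n$ forces $k\le d\le n$ (since $2d-k\ge n\ge d$), so for every $d$-subset $A\subseteq[n]$ we have $\max(A)\le n\le 2d-k$ and Proposition~\ref{prop:ExplicitCoefficients_f_A_n_k} gives $g_{A,n,k}\in I_{n,k}$. Because $\{1,\dots,2d-k\}\supseteq[n]$, the only squarefree degree-$d$ monomial $u$ with $\mathrm{supp}(u)\cap(\{1,\dots,2d-k\}\setminus A)=\emptyset$ is $u=x_A$ (it must satisfy $\mathrm{supp}(u)\subseteq A$, with $|\mathrm{supp}(u)|=d=|A|$), so the defining formula for $g_{A,n,k}$ collapses to $g_{A,n,k}=x_A$. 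Hence every squarefree degree-$d$ monomial lies in $I_{n,k}$ and $\mathrm{SFP}((I_{n,k})_{(d)})$ fills the whole span, of dimension $\binom{n}{d}$. (For $2d-k=n$ this is precisely Corollary~\ref{cor:SpecialCase_n=2d-k}.)

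For the second case I would invoke Lemma~\ref{lem:GenSysDegreewiseSFP}: $\mathrm{SFP}((I_{n,k})_{(d)})$ is spanned by the $\binom{n}{d-k}$ polynomials $f_{S,n,k}=\sum_{u\text{ sqfree},\,\deg u=d,\,x_S\mid u}u$ with $|S|=d-k$, so it suffices to prove these are linearly independent. In the basis of squarefree degree-$d$ monomials their coordinate matrix is the set-inclusion matrix $W$ with rows the $(d-k)$-subsets and columns the $d$-subsets of $[n]$, entry $1$ exactly when the row set is contained in the column set. Since $(d-k)+d=2d-k<n$, the classical rank formula for set-inclusion matrices (Gottlieb's theorem; equivalently, the decomposition of the permutation modules $\kb^{\binom{[n]}{j}}$ over $\mathbb{Q}$) gives $\operatorname{rank} W=\binom{n}{d-k}$, so the $f_{S,n,k}$ are linearly independent, $\dim_{\kb}\mathrm{SFP}((I_{n,k})_{(d)})=\binom{n}{d-k}$, and the count becomes exactly $\binom{n}{d}-\binom{n}{d-k}$ — in particular at most that. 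The degenerate range $d<k$, where the generating family is empty, $\mathrm{SFP}((I_{n,k})_{(d)})=0$, and $\binom{n}{d-k}=0$, is consistent with this.

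I expect the dimension bookkeeping to be routine; the real content — and the main obstacle — is the full-rank statement for $W$ in exactly the range $2d-k\le n$, i.e., the linear independence of the $f_{S,n,k}$. If I wanted to avoid citing Gottlieb, I would realize $W$ as a map of $S_n$-permutation modules and use the decomposition $\kb^{\binom{[n]}{j}}\cong\bigoplus_{i=0}^{j}S^{(n-i,i)}$ over $\mathbb{Q}$, reducing the rank computation to Schur-lemma bookkeeping; a direct induction on $n$ also works.
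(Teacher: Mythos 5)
Your proof is correct but takes a genuinely different route from the paper's. The paper passes via the lattice-path bijection of Lemma~\ref{lem:LatticePathBijections} to the paths of length $n$ with $d$ north steps that touch the line $y=x+k$, and counts those by a direct induction on $n$ using binomial identities; this needs and gives only the upper bound stated, with equality of the Hilbert series deferred to Corollary~\ref{cor:Hilbert_series} via Fr\"oberg's theorem. You instead reduce the count to $\binom{n}{d}-\dim_{\kb}\mathrm{SFP}((I_{n,k})_{(d)})$, use the spanning set $\{f_{S,n,k}\}$ from Lemma~\ref{lem:GenSysDegreewiseSFP}, and show these are linearly independent when $2d-k<n$ because their coordinate matrix is the inclusion matrix between $(d-k)$-subsets and $d$-subsets of $[n]$, which has full rank $\binom{n}{d-k}$ by Gottlieb's theorem since $(d-k)+d<n$. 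This yields the exact count rather than a bound, so it would also let one prove Corollary~\ref{cor:Hilbert_series} without citing Fr\"oberg; the trade-off is self-containedness versus strength, since the paper's induction is elementary whereas your argument imports a nontrivial external rank theorem (or, as you note, the decomposition of $S_n$-permutation modules over $\mathbb{Q}$, which is valid here because the matrix is integral and the field has characteristic zero). Your handling of the case $2d-k\geq n$, observing that $g_{A,n,k}$ collapses to the monomial $x_A$, is correct and matches Corollary~\ref{cor:SpecialCase_n=2d-k} at the boundary.
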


\begin{proof}
First, we observe that all such terms must be squarefree, since $x_i^2 \in I_{n,k}$ for all $i = 1, \dots, n$. We will show that the number of squarefree monomials of degree $d$ that are not divisible by any $\ini(g_{A,n,k})$ when $2d-k<n$ is precisely $\binom{n}{d} - \binom{n}{d - k}$. Since $g_{A,n,k} \in I_{n,k}$ by Proposition~\ref{prop:ExplicitCoefficients_f_A_n_k}, this establishes the desired bound on the number of terms outside $\ini(I_{n,k})$ in that case.

By Lemma~\ref{lem:LatticePathBijections}, the number of squarefree monomials of degree $d$ not divisible by any $\ini(g_{A,n,k})$ corresponds to the number of $(N, E)$-lattice paths of length $n$ taking $d$ steps north, remaining strictly below the line $y = x + k$. If $2d-k\geq n$, then all path of length $n$ taking $d$ steps north do touch the line $y=x+k$. Hence there are no monomials outside $\ini(I_{n,k})$ in that case. Assume $2d-k<n$ from now on. Given that there are $\binom{n}{d}$ such $(N, E)$-lattice paths of length $n$ with $d$ north steps, it remains to prove that the number of paths touching the line $y = x + k$ in this case is exactly $\binom{n}{d - k}$. We will establish this by induction on $n$.

The case for $n = 1$ is straightforward, so assume $n > 1$ and that the formula holds for $n - 1$. Further, suppose the point $(n - d, d)$ lies below the line $y = x + k$, ensuring that not all paths of length $n$ with $d$ north steps are forced to touch this line.
The key observation is that $(N, E)$-paths ending at $(n - d, d)$ correspond bijectively to paths ending one step earlier at either $(n - d - 1, d)$ or $(n - d, d - 1)$. This correspondence remains valid even if we enforce the condition of touching the line $y = x + k$. By the induction hypothesis, the number of such paths is given by
\[
\binom{n - 1}{d - k} + \binom{n - 1}{d - 1 - k} = \binom{n}{d - k},
\]
as required.
The one case needing extra care is when $(n - d - 1, d)$ lies exactly on the line $y = x + k$, since we cannot directly apply the induction hypothesis in this situation. However, in this case, all $\binom{n - 1}{d}$ paths necessarily touch the line. Since $\binom{n - 1}{d} = \binom{n - 1}{d - k}$ when $(n - d - 1, d)$ lies on $y = x + k$, the formula still holds.
\end{proof}

\begin{corollary}\label{cor:Hilbert_series}
The Hilbert series of $R/I_{n,k}$ is given by
\[
\left[(1+t)^n(1-t^k)\right]
\]
where the brackets indicate truncation at the first non-positive coefficient.
\end{corollary}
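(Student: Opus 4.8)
The plan is to sandwich the Hilbert function $d\mapsto\dim(R/I_{n,k})_d$ between a lower and an upper bound that both coincide with the coefficients of the truncated series $\left[(1+t)^n(1-t^k)\right]$.

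For the lower bound I would pass to the squarefree algebra $B=R/(x_1^2,\ldots,x_n^2)$, which has Hilbert series $(1+t)^n$, and let $\bar\ell$ be the image in $B$ of $x_1+\cdots+x_n$. Since $(x_1^2,\ldots,x_n^2)\subseteq I_{n,k}$, we have $R/I_{n,k}\cong B/(\bar\ell^{k})$, so in degree $d$
\[
\dim(R/I_{n,k})_d=\binom{n}{d}-\dim\!\big(\bar\ell^{k}B_{d-k}\big)\ \geq\ \binom{n}{d}-\binom{n}{d-k},
\]
because multiplication by $\bar\ell^{k}$ carries $B_{d-k}$, of dimension $\binom{n}{d-k}$, into $B_d$. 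For the upper bound I would invoke that passing to an initial ideal preserves the Hilbert function, so $\dim(R/I_{n,k})_d$ equals the number of degree-$d$ monomials outside $\ini(I_{n,k})$; by Lemma~\ref{lem:CountingArgument} this count is $0$ when $2d-k\geq n$ and at most $\binom{n}{d}-\binom{n}{d-k}$ when $2d-k<n$.

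It then remains to locate the truncation point of $(1+t)^n(1-t^k)=\sum_{d\geq 0}\big(\binom{n}{d}-\binom{n}{d-k}\big)t^d$. Using the symmetry and unimodality of the binomials $\binom{n}{j}$ about $j=n/2$, one checks that $\binom{n}{d}-\binom{n}{d-k}>0$ exactly when $2d-k<n$ and is $\leq 0$ otherwise, so the first non-positive coefficient occurs at $d_0=\min\{d:2d-k\geq n\}$ and $\left[(1+t)^n(1-t^k)\right]=\sum_{2d-k<n}\big(\binom{n}{d}-\binom{n}{d-k}\big)t^d$. Combining this with the two bounds gives $\dim(R/I_{n,k})_d=\binom{n}{d}-\binom{n}{d-k}$ for $2d-k<n$ and $\dim(R/I_{n,k})_d=0$ for $2d-k\geq n$, which is precisely the truncated series.

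I expect the only subtlety to be this last piece of bookkeeping: one must verify that the threshold $2d-k=n$ separating the two cases of Lemma~\ref{lem:CountingArgument} is exactly the degree at which $(1+t)^n(1-t^k)$ first acquires a non-positive coefficient. The remaining steps are a straightforward assembly of results already in hand, so no genuine obstacle is anticipated.
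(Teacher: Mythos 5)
Your argument is correct and shares the paper's overall sandwich structure: a lower bound on $\dim(R/I_{n,k})_d$ combined with the upper bound from Lemma~\ref{lem:CountingArgument}, followed by the observation that the truncation point of $(1+t)^n(1-t^k)$ is exactly the degree where $2d-k\geq n$. Where you diverge is the lower bound: the paper cites Fr\"oberg's general result that the Hilbert series of any quotient by forms is coefficientwise at least the truncated Euler product, whereas you derive the needed inequality $\dim(R/I_{n,k})_d\geq\binom{n}{d}-\binom{n}{d-k}$ directly from the presentation $R/I_{n,k}\cong B/(\bar\ell^{\,k})$ by noting that $\dim\bigl(\bar\ell^{\,k}B_{d-k}\bigr)\leq\dim B_{d-k}=\binom{n}{d-k}$. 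Your version is more self-contained and elementary (it replaces the citation by a one-line linear-algebra estimate), while the paper's phrasing has the advantage of explicitly tying the corollary to the thinness/Fr\"oberg framework that runs through the rest of the section. The remaining bookkeeping step—checking by unimodality and symmetry of the binomials that $\binom{n}{d}-\binom{n}{d-k}>0$ exactly when $2d-k<n$, so that the truncation threshold matches the case split in Lemma~\ref{lem:CountingArgument}—is handled correctly, and the proof is complete.
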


\begin{proof}
Since 
\[
\left[(1+t)^n(1-t^k)\right] = \left[\frac{(1-t^2)^n}{(1-t)^n}(1-t^k)\right]\!,
\]
Fr\"oberg's result \cite{Froberg} implies that this is the smallest the Hilbert series of $R/I_{n,k}$ can be. Next, the coefficient of $t^d$ of this polynomial is given by 
\[
\max \left\{\binom{n}{d}-\binom{n}{d-k},0\right\} = \begin{cases}
\binom{n}{d}-\binom{n}{d-k} & \text{ if } 2d-k< n \\
0 & \text{ otherwise}.
\end{cases}
\]
However, by Lemma \ref{lem:CountingArgument}, the Hilbert series of $R/I_{n,k}$ cannot be larger than this. Therefore, equality holds.
\end{proof}

\subsection{Proof of the main theorem}
We are now ready to state our main result. 
\begin{theorem}\label{thm:GBasis} 
Consider the family $\mathcal{A}$ of inclusion-minimal members of the family $\mathcal{A}'$ of subsets $A \subseteq \{1, \ldots, n\}$ satisfying $\max(A) = 2|A| - k$ for some $k\geq 2$.
Then, the reduced Gröbner basis of the ideal $I_{n,k} = (x_1^2, \dots, x_n^2, (x_1 + \cdots + x_n)^k)$ with respect to any term ordering that, after any permutation of the first $k$ variables and a suitable set of transpositions of the form $(k+2i - 1,k+ 2i)$ for $i\geq 1$, satisfies $x_1 \succ \cdots \succ x_n$, is given by
$$G_{n,k}=\{x_1^{2},\ldots,x_n^2 \}\cup \bigcup_{d=k}^{k+\lfloor (n-k)/2\rfloor}\left\{ g_{A,n,k}\mid A\in\mathcal{A}, |A|=d \right\},$$
where for $|A|=d$,
\begin{equation*}\label{eq:g_{A,n,k}}
    g_{A,n,k} = e_d(x_{i_1},\dots, x_{i_{n-d+k}})
\end{equation*}
is the elementary symmetric polynomial of degree $d$ in the variables indexed by the set $\{i_1,\dots, i_{n-d+k}\}=A\cup \{2d-k+1,\dots, n\}$ with leading term $x_A=\prod_{a\in A}x_a$. In particular, $g_{A,n,k}$ is supported on $\binom{n-d+k}{d}$ terms.

Moreover, for fixed $k$ the sequence of cardinalities $|\{A\in\mathcal{A}: |A|=d\}|$ is a $(k-1)$-fold convolution of the sequence of Catalan numbers.
\end{theorem}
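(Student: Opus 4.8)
The plan is to establish the Gröbner basis statement first for a single fixed term order refining $x_1\succ\cdots\succ x_n$, then to bootstrap to all the orders allowed in the hypothesis using that $I_{n,k}$ is invariant under the full symmetric group $S_n$; the elementary symmetric description and the Catalan count are then light bookkeeping on top of the lemmas already proved. The starting observation is that, for any term order refining $x_{j_1}\succ\cdots\succ x_{j_m}$, the leading term of $e_d(x_{j_1},\dots,x_{j_m})$ is $x_{j_1}\cdots x_{j_d}$: leading monomials are multiplicative and $x_a\succeq x_b$ whenever $a\le b$, so replacing the support of any competing squarefree degree-$d$ monomial one entry at a time shows $x_{j_1}\cdots x_{j_d}$ dominates. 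Applied to the $g_{A,n,k}$ this recovers $\ini(g_{A,n,k})=x_A$ (Lemma~\ref{lem:LatticePathBijections}(1)) for every order with $x_1\succ\cdots\succ x_n$, and in particular shows that the monomial ideal $J:=(x_1^2,\dots,x_n^2)+(x_A\mid A\in\mathcal A)$, and the set of leading terms of $G_{n,k}$, do not depend on the order.

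Since $g_{A,n,k}\in I_{n,k}$ by Proposition~\ref{prop:ExplicitCoefficients_f_A_n_k}, we have $J\subseteq\ini(I_{n,k})$, and to prove that $G_{n,k}$ is a Gröbner basis it suffices to show this is an equality, which I would do by a Hilbert function comparison. The degree-$d$ monomials outside $J$ are exactly the squarefree ones divisible by no $x_A$; under the bijection $\tau$ these are the length-$n$ lattice paths with $d$ north steps that stay strictly below $y=x+k$, and the count carried out in the proof of Lemma~\ref{lem:CountingArgument} gives their number as $\max\{\binom nd-\binom n{d-k},0\}$. By Corollary~\ref{cor:Hilbert_series} this equals $\dim_{\kb}(R/\ini(I_{n,k}))_d=\dim_{\kb}(R/I_{n,k})_d$, so $J=\ini(I_{n,k})$.

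For reducedness, the leading coefficients equal $1$; the minimal generators of $\ini(I_{n,k})=J$ are precisely $\{x_i^2\}\cup\{x_A\mid A\in\mathcal A\}$, because $x_i^2$ cannot divide a squarefree monomial, no $x_A$ (which satisfies $|A|\ge k\ge 2$) divides any $x_i^2$, and $\mathcal A$ is an antichain by construction; thus $G_{n,k}$ is in bijection with them. For the tails, a term of $g_{A,n,k}$ other than $x_A$ is $x_B$ with $|B|=d=|A|$, $B\subseteq A\cup\{2d-k+1,\dots,n\}$, $B\ne A$, and I would check that the path $\tau^{-1}(x_B)$ never meets $y=x+k$: for $p\le 2d-k$ one has $B\cap[p]\subseteq A\cap[p]$ (since $B\cap[2d-k]\subseteq A$ because $\max(A)=2d-k$), so the height of $\tau^{-1}(x_B)$ stays under that of the minimal path $\tau^{-1}(x_A)$, which by minimality of $A$ meets the line only at $p=2d-k$; and for $p>2d-k$ the height is at most $d$. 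Since by Lemma~\ref{lem:LatticePathBijections} a squarefree monomial lies in $\ini(I_{n,k})$ exactly when its path meets the line, $x_B\notin\ini(I_{n,k})$, so $G_{n,k}$ is reduced.

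The previous paragraphs already prove the statement for every order refining $x_1\succ\cdots\succ x_n$. Given an order $\prec$ whose induced variable ordering becomes $x_1\succ\cdots\succ x_n$ only after a permutation $\sigma$ of $\{1,\dots,k\}$ composed with some of the transpositions $(k+2i-1,k+2i)$, I would use $\sigma(I_{n,k})=I_{n,k}$: the reduced Gröbner basis with respect to $\prec$ is then $\sigma^{-1}$ of the one with respect to $\sigma(\prec)$, i.e.\ $\sigma^{-1}(G_{n,k})$, so it remains to prove the combinatorial identity $\sigma(\mathcal A)=\mathcal A$ for such $\sigma$ (whence $\sigma(G_{n,k})=G_{n,k}$, since $\sigma$ fixes $\{x_i^2\}$). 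Finally, the identity $g_{A,n,k}=e_d$ on the variables indexed by $A\cup\{2d-k+1,\dots,n\}$, and hence the term count $\binom{n-d+k}{d}$, are immediate from Definition~\ref{def:f_A_n_k} and the support equivalence noted there, and the statement that $(|\{A\in\mathcal A:|A|=d\}|)_d$ is the $(k-1)$-fold convolution of the Catalan numbers is Corollary~\ref{cor:ConvolutionCatalanNumbers} once $\{A\in\mathcal A:|A|=d\}$ is identified with the division-minimal degree-$d$ leading terms $x_A$. The step I expect to be the genuine obstacle is this last combinatorial identity $\sigma(\mathcal A)=\mathcal A$: it requires a sufficiently explicit description of the minimal sets $A$ — essentially, that a size-$d$ member of $\mathcal A$ encodes a Catalan-type prefix path of length $2d-k-1$ staying below $y=x+k-1$ — in order to verify that exactly the prescribed symmetries, and crucially only the "suitable" transpositions, preserve that class; the rest of the argument is assembly of lemmas already in place.
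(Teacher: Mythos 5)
Your proposal is essentially correct and mirrors the paper's argument closely in its first three stages: you establish $\ini(g_{A,n,k})=x_A$ for any order refining $x_1\succ\cdots\succ x_n$ (the same sorted-domination argument, here phrased via successive variable swaps), you show $\ini(I_{n,k})=(x_1^2,\dots,x_n^2)+(x_A\mid A\in\mathcal A)$ by comparing Hilbert functions through Lemma~\ref{lem:CountingArgument} and Corollary~\ref{cor:Hilbert_series}, and you verify reducedness via the same lattice-path observation. These three steps are carried out soundly.

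Where you deviate, and where your one flagged gap sits, is the invariance under the allowed block permutations. You reduce it to the combinatorial identity $\sigma(\mathcal A)=\mathcal A$ and then bootstrap via $\sigma(I_{n,k})=I_{n,k}$. This can be pushed through, but it is genuinely more work than what you need: when $\sigma$ is a transposition $(k+2i-1,\,k+2i)$ acting on a path that dips at that pair (i.e., the swap raises the height at step $k+2i-1$ by two), one must check the raised height cannot reach $k$; this follows from the parity constraint that the height after $p$ steps has the same parity as $p$, and $k+2i-1\not\equiv k \pmod 2$. Similarly, permuting the first $k$ steps is safe because a path of length $<k$ cannot reach height $k$. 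You did not carry out this parity check, and it is exactly the piece the proposal leaves open.

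The paper's Lemma~\ref{lem:Invariance_G_n_k} sidesteps all of this with a shorter observation: since $\max(A)-k=2(|A|-k)$ is always even, the index $\max(A)$ lies at the end of a block in the $(k,2,\dots,2,1)$-block ranking, so every variable with index $\le\max(A)$ outranks every variable with index $>\max(A)$ under any such ordering $\prec$. Combined with the fact that $x_A$ is the unique term of $g_{A,n,k}$ supported entirely on $\{1,\dots,\max(A)\}$, this gives $\ini_\prec(g_{A,n,k})=x_A$ directly for every block ranking, and the rest of the Gröbner-basis and reducedness argument goes through verbatim. As a by-product this route also proves the identity $\sigma(\mathcal A)=\mathcal A$ that you were trying to establish head-on (since the reduced Gröbner basis is unique), so you could either adopt Lemma~\ref{lem:Invariance_G_n_k}'s argument outright, or keep your structure but supply the missing parity check. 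Either way, the remaining ingredients — the elementary-symmetric description, the term count $\binom{n-d+k}{d}$, and the Catalan-convolution count via Corollary~\ref{cor:ConvolutionCatalanNumbers} — are bookkeeping as you say.
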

\begin{proof} 
We start by using any term ordering with $x_1 \succ \cdots \succ x_n$. First, by Lemma~\ref{lem:LatticePathBijections} and Corollary~\ref{cor:Hilbert_series}, the squarefree leading terms of polynomials in $I_{n,k}$ are in bijection with $n$-step $(N,E)$-lattice paths that touch the line $y = x + k$, and each of these terms is divisible by some leading term $\ini(g_{A,n,k})$. Among these leading terms, the minimal ones with respect to division are those that touch the line $y = x + k$ exactly once, occurring after their last step north. Moreover, the penultimate step cannot be east; if it were, there would be an earlier intersection with the line. This establishes the defining conditions of the set $\mathcal{A}$. Regarding non-squarefree leading terms, it is clear that they are divisible by some $x_i^2$. By Proposition~\ref{prop:ExplicitCoefficients_f_A_n_k}, $G_{n,k}\subset I_{n,k}$. Thus, $G_{n,k}$ is a minimal Gr\"{o}bner basis of $I_{n,k}$.

\medskip
To show that $G_{n,k}$ is reduced, first observe that the leading coefficients of its elements are $1$. Now, consider \(g_{A,n,k} \in G_{n,k}\). Let \((a,b) \in \mathbb{Z}^2\) be the lattice point where \(\tau^{-1}(\ini(g_{A,n,k}))\) touches the line \(y = x + k\). Any term \(u \in g_{A,n,k} - \ini(g_{A,n,k})\) can be expressed as 
$u=x_{j_1}\cdots x_{j_m}\ini(g_{A,n,k})/(x_{\ell_1}\cdots x_{\ell_m})$,
where \(0 < m \leq |A|\), \(j_1, \ldots, j_m > \max(A)\), and \(\ell_1, \ldots, \ell_m \in A\). Consequently, the lattice path \(\tau^{-1}(u)\) reaches the point \((a + m, b - m)\) after \(\max(A)\) steps. The closest it could approach the line \(y = x + k\) is at the point \((a + m, b)\) after \(\max(A) + m\) steps. However, this point lies below the line since \(m > 0\) and \((a, b)\) is on the line. Therefore, we conclude that \(u \notin \ini(I_{n,k})\).

The claim on the sequence of cardinalities $|\{A\in\mathcal{A}: |A|=d\}|$ for fixed $k$ follows from Corollary~\ref{cor:ConvolutionCatalanNumbers}.
Moreover, Lemma~\ref{lem:Invariance_G_n_k} below states the conditions a term ordering $\prec'$ must satisfy for $\ini_{\prec'}(I_{n,k})=\ini(G_{n,k})$ to hold. It shows that only the blockwise ranking $\{x_1,\ldots,x_k\}\succ \{x_{k+1},x_{k+2}\}\succ \cdots$ matters, and this establishes the statement on permutations. 
\end{proof}

\begin{remark}\label{rem:k=1}
The reason for omitting the case $k = 1$ from Theorem \ref{thm:GBasis} is due to notational challenges. But it is straightforward to give the Gr\"obner basis also in this case. To this end, we introduce an extra variable $x_0$ with the property that $x_0 \succ x_i$ for $i = 1, \ldots, n$ and consider the ideal  $$I=(x_0^2,x_1^2, \ldots,x_n^2,(x_0+x_1+\cdots+x_n)).$$ After reducing $x_0^2$ with respect to the linear element we get $$I=((x_1+\cdots+x_n)^2, x_1^2,\ldots,x_n^2,(x_0+x_1+\cdots+x_n)).$$ Since $x_0$ only occurs in the last linear polynomial, and is leading, it follows that the reduced Gr\"obner basis for $I$ is equal to $$\{e_1(x_0,\ldots,x_n)\} \cup G_{n,2}.$$
\end{remark}

\begin{remark}
Worth noticing is that although the elements of the Gr\"obner basis only have $0,1$-coefficients, our result only holds in characteristic $0$ in general. This is explicit from the description of the Gr\"obner basis elements given in Theorem~\ref{thm:f_description},  and implicit from results showing that the Lefschetz properties for monomial complete intersections is dependent on the characteristic of the underlying field~\cite{Kustin, LUNDQVIST2019213}.
\end{remark}
\begin{example}\label{ex:GB_n=5_k=2}
    As in Example~\ref{ex:InhomogSystem_n=5_d=3_k=2}, we set $n=5$ and $k=2$. The reduced Gr\"{o}bner basis of $I_{5,2}$ with respect to the degree reverse lexicographic ordering is 
    $$G_{5,2}=\{x_1^2,x_2^2,x_3^2,x_4^2,x_5^2,g_{\{1,2\},5,2},g_{\{1,3,4\},5,2},g_{\{2,3,4\},5,2}  \},$$
    where
    \begin{align*}
        g_{\{1,2\},5,2}&= e_2(x_1,x_2,x_3,x_4,x_5) = x_1 x_2 + x_1 x_3 + \cdots + x_4 x_5,\\ 
        g_{\{1,3,4\},5,2}&= e_3(x_1,x_3,x_4,x_5) = x_1 x_3 x_4 + x_1 x_3 x_5 + x_1 x_4 x_5 + x_3 x_4 x_5,\\
        g_{\{2,3,4\},5,2}&= e_3(x_2,x_3,x_4,x_5) = x_2 x_3 x_4 + x_2 x_3 x_5 + x_2 x_4 x_5 + x_3 x_4 x_5.
    \end{align*}
\end{example}

\subsection{The Gr\"obner basis for a general term ordering}

In the remainder of this section, we investigate all the Gr\"{o}bner bases of $I_{n,k}$, showing that they have the same general structure as $G_{n,k}$, but depend on the way the variables are ranked.  

\begin{remark}\label{rem:G_n_k_NotUniversalInGeneral}
Note that $G_{5,2}$, and indeed any Gr\"{o}bner basis of the form $G_{n,k}$ is also a reduced Gr\"{o}bner basis with respect to the lexicographic ordering with $x_1\succ x_2\succ\cdots$. This is because the leading terms of the elements $g_{A,n,k}$ are the same for the degree reverse lexicographic and lexicographic orderings.

However, $G_{n,k}$ is in general not a universal Gr\"{o}bner basis: For instance, in $G_{5,2}$ (see Example~\ref{ex:GB_n=5_k=2}), the polynomials $g_{\{1,3,4\},5,2}$ and $g_{\{2,3,4\},5,2}$ share the same leading term, $x_3 x_4 x_5$, with respect to the degree reverse lexicographic ordering with $x_5\succ x_4\succ x_3\succ x_2 \succ x_1$.
\end{remark}

We aim to compute the number of different reduced Gr\"{o}bner bases of the ideal $I_{n,k}$. We write $N_{n,k}$ for this number. It turns out that certain features of the variable ranking induced by a given term order $\prec$ suffice to determine the Gr\"{o}bner basis of $I_{n,k}$ with respect to $\prec$.

\begin{definition}\label{def:BlockVariableRanking}
 Let $\prec$ be a term ordering on $\mathbf{k}[x_1,\ldots,x_n]$ and let $X_1,\ldots,X_m$ be subsets of the variables $X=\{x_1,\ldots,x_n\}$ that partition $X$. We say that $\prec$ induces the \emph{block variable ranking} $X_1\succ X_2\succ \cdots\succ X_m$ if, for any integers $1 \leq i < j \leq m$, with $x \in X_i$ and $y \in X_j$, it holds that $x \succ y$. We call an ordering an \emph{$(\ell_1,\ell_2,\ldots,\ell_m)$-block variable ranking} if $|X_i|=\ell_i$ for $1 \leq i \leq m$. In particular, a $(k,2,\ldots,2,1)$-block variable ranking is any term ordering such that:
\begin{itemize}
    \item $\{x_1,\ldots,x_k\}\succ \{x_{k+1},x_{k+2}\}\succ\cdots \succ \{x_{n-1},x_n\}$ if $n-k \equiv 0 \mod 2$, and
    \item $\{x_1,\ldots,x_k\}\succ \{x_{k+1},x_{k+2}\}\succ\cdots\succ \{x_{n-2},x_{n-1}\}\succ \{x_n\}$ if $n-k \equiv 1 \mod 2$.
\end{itemize}
\end{definition}

\begin{lemma}\label{lem:Invariance_G_n_k}
    Let $\prec$ be a $(k,2,\ldots,2,1)$-block variable ranking. 
    Then the reduced Gr\"{o}bner basis of $I_{n,k}$ with respect to $\prec$ is $G_{n,k}$.
\end{lemma}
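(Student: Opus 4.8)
The strategy is to reduce the statement for an arbitrary $(k,2,\ldots,2,1)$-block variable ranking $\prec$ to the case already handled in Theorem~\ref{thm:GBasis}, namely a term ordering with $x_1 \succ \cdots \succ x_n$. The key point is that the construction of $G_{n,k}$, together with the counting in Lemma~\ref{lem:CountingArgument} and Corollary~\ref{cor:Hilbert_series}, is essentially insensitive to the precise ordering within each block: only the block structure $\{x_1,\ldots,x_k\} \succ \{x_{k+1},x_{k+2}\} \succ \cdots$ enters. So the plan is first to show that every $g_{A,n,k} \in G_{n,k}$ retains the \emph{same} leading term $x_A$ under $\prec$, then to invoke the Hilbert series bound to conclude that no further Gröbner basis elements are needed, and finally to repeat the reducedness argument from the proof of Theorem~\ref{thm:GBasis} verbatim.

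\textbf{Step 1: leading terms are preserved.} Fix $A \in \mathcal{A}$ with $|A|=d$, so $\max(A) = 2d-k$ and $g_{A,n,k} = e_d(x_{i_1},\ldots,x_{i_{n-d+k}})$ with $\{i_1,\ldots,i_{n-d+k}\} = A \cup \{2d-k+1,\ldots,n\}$. Every monomial $u$ in the support of $g_{A,n,k}$ other than $x_A$ contains at least one variable of index $> \max(A) = 2d-k$ (this is exactly the argument from Lemma~\ref{lem:LatticePathBijections}(1)). I must check that $x_A \succ u$ under $\prec$. Because $A \subseteq \{1,\ldots,2d-k\}$, every variable appearing in $x_A$ lies in a block that is $\succ$ than the block of $x_{2d-k+1},\ldots,x_n$; more precisely, since $\prec$ is a $(k,2,\ldots,2,1)$-block ranking, the variables $x_1,\ldots,x_k$ form the top block and $x_{k+1},\ldots,x_n$ are split into blocks of size $2$ (and possibly a final singleton) in consecutive order, so any variable of index $\le 2d-k$ is $\succ$ than any variable of index $> 2d-k$ \emph{unless} they share a block of the form $\{x_{k+2i-1},x_{k+2i}\}$ straddled by $2d-k$, which cannot happen since $2d-k \ge k$ means the split point $2d-k$ is either in the top block or at a block boundary of the size-$2$ blocks. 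Writing $u = x_{j_1}\cdots x_{j_m} \cdot x_A / (x_{\ell_1}\cdots x_{\ell_m})$ with $j_1,\ldots,j_m > 2d-k \ge \ell_1,\ldots,\ell_m$, each swap replaces a variable by one in a strictly lower block, so $x_A \succ u$. Hence $\ini_\prec(g_{A,n,k}) = x_A$ for all $A \in \mathcal{A}$, and likewise $\ini_\prec(x_i^2) = x_i^2$.

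\textbf{Step 2: $G_{n,k}$ is a Gröbner basis under $\prec$, and it is reduced.} Since $G_{n,k} \subset I_{n,k}$ by Proposition~\ref{prop:ExplicitCoefficients_f_A_n_k} and Step~1 identifies its leading terms as the $x_i^2$ together with the $x_A$ for $A \in \mathcal{A}$, the monomial ideal $(\ini_\prec(g) : g \in G_{n,k})$ is exactly $(x_1^2,\ldots,x_n^2) + (x_A : A \in \mathcal{A})$, which is independent of $\prec$. The quotient of $R$ by this monomial ideal has, in each degree $d$, dimension equal to the number of squarefree monomials of degree $d$ not divisible by any $x_A$; by the lattice-path bijection of Lemma~\ref{lem:LatticePathBijections} and the count in Lemma~\ref{lem:CountingArgument} this is at most $\binom{n}{d} - \binom{n}{d-k}$, i.e.\ at most the $d$-th coefficient of $[(1+t)^n(1-t^k)]$, which by Fröberg's bound (Corollary~\ref{cor:Hilbert_series}) is at most $\dim_{\mathbf k} (R/I_{n,k})_d$. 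Since $\dim_{\mathbf k}(R/\ini_\prec(I_{n,k}))_d = \dim_{\mathbf k}(R/I_{n,k})_d$ and $\ini_\prec(I_{n,k}) \supseteq (\ini_\prec(g):g\in G_{n,k})$, all these inequalities are equalities and $\ini_\prec(I_{n,k}) = (\ini_\prec(g):g\in G_{n,k})$; thus $G_{n,k}$ is a Gröbner basis under $\prec$, and it is minimal because the generators $x_i^2$ and $x_A$, $A \in \mathcal{A}$, minimally generate this monomial ideal. For reducedness, the leading coefficients are $1$, and the tail-terms argument from the proof of Theorem~\ref{thm:GBasis} applies without change: for $u \in g_{A,n,k} - \ini_\prec(g_{A,n,k})$, writing the lattice path of $u$ as in Step~1 and tracking where it can approach the line $y = x+k$, one sees $u \notin \ini_\prec(I_{n,k})$; and no $x_i^2$ divides any tail monomial since all tails are squarefree.

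\textbf{Main obstacle.} The only genuinely delicate point is Step~1: one must be sure that the block boundaries of a $(k,2,\ldots,2,1)$-ranking never split the index set $\{1,\ldots,2d-k\}$ from its complement in a way that could allow some tail monomial of $g_{A,n,k}$ to become the leading term. This is precisely where the hypothesis $k \ge 2$ and the specific block sizes $(k,2,\ldots,2,1)$ are used: the split point $2d-k$ is $\equiv k \pmod 2$, so it always falls at a boundary between size-$2$ blocks (or inside/at the edge of the top block), never in the middle of a size-$2$ block; hence swapping a variable of index $\le 2d-k$ out for one of index $>2d-k$ always moves strictly down in the block order. Once this bookkeeping is pinned down, everything else is a transcription of arguments already in the paper.
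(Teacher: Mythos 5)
Your proof is correct and takes essentially the same approach as the paper's: the key observation in both is that $\max(A) - k = 2(d-k)$ is even, so the cut at index $\max(A)=2d-k$ coincides with a block boundary of any $(k,2,\ldots,2,1)$-block variable ranking, whence every variable of index $\leq \max(A)$ outranks every variable of index $>\max(A)$ and the leading term of $g_{A,n,k}$ is $x_A$ under $\prec$ as well. Your Step~2 makes explicit a deduction (that coinciding leading terms plus the Hilbert series bound force $G_{n,k}$ to be the reduced Gröbner basis under $\prec$) that the paper leaves implicit; this is a welcome clarification but not a different argument.
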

\begin{proof}
   We need to show that for each polynomial $g \in G_{n,k}$, its leading term with respect to the term ordering $\prec$ coincides with its leading term with respect to the standard degree reverse lexicographic ordering. This is straightforward for the monomials in $G_{n,k}$.
Now, consider any polynomial $g_{A,n,k} \in G_{n,k}$. Let $A = \{i_1, \ldots, i_d\}$ as usual, where we assume $i_1 < \cdots < i_d$ and $\max(A) = i_d = 2d - k$. Then we have $\max(A) - k = 2d - k - k = 2(d - k) \equiv 0 \mod 2$. This implies that any variable with an index less than or equal to $\max(A)$ is ranked higher by $\prec$ than any variable with an index greater than $\max(A)$. 
Recalling Definition~\ref{def:f_A_n_k}, it follows that $\ini_{\prec}(g_{A,n,k}) = x_{i_1} \cdots x_{i_d}$, as desired.
\end{proof}

\begin{definition}\label{def:G_n_k_sigma}
 Consider a permutation $\sigma \in S_n$ and let $x_{\sigma(1)}, \ldots, x_{\sigma(n)}$ be a permutation of the variables $x_1,\ldots,x_n$. With respect to the degree reverse lexicographic ordering defined by $x_{\sigma(1)} \succ \cdots \succ x_{\sigma(n)}$, we obtain a Gröbner basis for $I_{n,k}$, as outlined in Theorem~\ref{thm:GBasis} denoted by $G_{n,k}^{\sigma}$. The Gröbner basis $G_{n,k}$ from Theorem~\ref{thm:GBasis} can be expressed as $G_{n,k}^{\mathrm{id}}$, for the identity permutation `id'. 
\end{definition}

\begin{proposition}\label{prop:N_n_k_viaMultinomialCoeffs}
The number $N_{n,k}$ of distinct Gröbner bases of $I_{n,k}$ is given by the multinomial coefficient $\binom{n}{k, 2, \ldots, 2, 1}$, where the final entry $1$ is included if and only if $n - k \equiv 1 \mod 2$.
\end{proposition}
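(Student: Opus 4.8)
The plan is to count the distinct reduced Gr\"obner bases of $I_{n,k}$ by showing that each one is of the form $G_{n,k}^{\sigma}$ for some $\sigma\in S_n$, and that two permutations $\sigma,\sigma'$ yield the same Gr\"obner basis if and only if they induce the same $(k,2,\ldots,2,1)$-block variable ranking. Granting that, the number $N_{n,k}$ equals the number of distinct $(k,2,\ldots,2,1)$-block variable rankings on $n$ variables, which is exactly the multinomial coefficient $\binom{n}{k,2,\ldots,2,1}$: we partition $\{x_1,\ldots,x_n\}$ into an ordered sequence of blocks of sizes $k,2,\ldots,2$ (and a final $1$ when $n-k$ is odd), and every such ordered set partition arises from some term ordering, while the ordering within a block is irrelevant by Lemma~\ref{lem:Invariance_G_n_k}.

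The key steps, in order, are as follows. First, I would observe that for any term ordering $\prec$ with induced linear variable ranking $x_{\sigma(1)}\succ\cdots\succ x_{\sigma(n)}$, the leading terms of $I_{n,k}$ depend only on this ranking: indeed the squares contribute the $x_i^2$ regardless, and by Lemma~\ref{lem:LatticePathBijections} and the proof of Theorem~\ref{thm:GBasis}, the minimal squarefree leading terms are determined combinatorially from the position (in the $\sigma$-order) of the variables, so $\ini_{\prec}(I_{n,k})=\ini(G_{n,k}^{\sigma})$ and hence the reduced Gr\"obner basis with respect to $\prec$ is $G_{n,k}^{\sigma}$. This already shows every reduced Gr\"obner basis is some $G_{n,k}^{\sigma}$, so $N_{n,k}=|\{G_{n,k}^{\sigma}:\sigma\in S_n\}|$. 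Second, I would show that $G_{n,k}^{\sigma}=G_{n,k}^{\sigma'}$ precisely when $\sigma$ and $\sigma'$ induce the same block variable ranking of type $(k,2,\ldots,2,1)$. The ``if'' direction is Lemma~\ref{lem:Invariance_G_n_k} applied after relabeling by $\sigma$. For the ``only if'' direction, I would argue that the block structure can be read off from $\ini(G_{n,k}^{\sigma})$: the degree-$k$ generator is the elementary symmetric polynomial $e_k$ of the top $k$ variables in the $\sigma$-order (so the first block of size $k$ is recovered as the support-complement pattern of that generator's leading term together with the larger generators), and inductively each subsequent pair $\{x_{k+2i-1},x_{k+2i}\}$ (in $\sigma$-order) is recovered from which degree-$(k+i)$ leading terms appear. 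Thus distinct block rankings give distinct initial ideals, hence distinct reduced Gr\"obner bases.

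Finally, I would count: a $(k,2,\ldots,2,1)$-block variable ranking is an ordered partition of the $n$-element set $\{x_1,\ldots,x_n\}$ into blocks $X_1,\ldots,X_m$ of prescribed sizes $k,2,\ldots,2$ (with a trailing $1$ iff $n-k$ is odd), and the number of such ordered set partitions is the multinomial coefficient $\binom{n}{k,2,\ldots,2,1}$. Every such ranking is realized by an actual term ordering (e.g.\ a block-refined degree reverse lexicographic order), and by the two steps above distinct rankings give distinct, and all, reduced Gr\"obner bases. Hence $N_{n,k}=\binom{n}{k,2,\ldots,2,1}$.

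The main obstacle I anticipate is the ``only if'' direction of the second step: making precise the claim that the block variable ranking is an invariant of the initial ideal $\ini(G_{n,k}^{\sigma})$. One must carefully verify, using the lattice-path characterization of the minimal generators, that if two rankings place the variables in different blocks — say they disagree on which variables occupy positions $2\ell+k-1$ and $2\ell+k$ for some $\ell$, or on the membership of the top $k$-block — then some minimal squarefree generator of $\ini(I_{n,k})$ differs between the two, so that the combinatorial reconstruction of the blocks from the set of leading monomials is unambiguous. The within-block freedom (Lemma~\ref{lem:Invariance_G_n_k}) is exactly what makes the count a multinomial rather than a factorial, so the argument must show the reconstruction is insensitive to intra-block order but sensitive to inter-block order; this is the delicate point and where I would spend most of the proof's effort.
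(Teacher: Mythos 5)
Your proposal takes essentially the same route as the paper: reduce to counting $(k,2,\ldots,2,1)$-block variable rankings, use Lemma~\ref{lem:Invariance_G_n_k} for the ``if'' direction, and for the ``only if'' direction show that the block structure can be reconstructed from the squarefree leading terms of the initial ideal. The delicate ``only if'' step you flag is handled in the paper exactly as you sketch it: one takes the earliest block where two rankings differ (with common prefix $Y$), and observes that the degree-$d$ squarefree component of $\ini(I_{n,k})$ — for $d$ determined by $|Y|$, namely $d=k$ when $Y$ is empty — is generated by leading terms divisible by the product of the two distinguished variables and otherwise only by variables in $Y$, hence differs between the two rankings.
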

\begin{proof}
Since the multinomial coefficient $\binom{n}{k, 2, \ldots, 2, 1}$ counts the number of ordered partitions of an $n$-element set into subsets of cardinalities $k$, $2$, \ldots, $2$, and (possibly) $1$, it can be interpreted as the number of distinct $(k, 2, \ldots, 2, 1)$-block variable rankings of the variables $x_1, \ldots, x_n$. Therefore, it suffices to demonstrate that the reduced Gröbner bases of $I_{n,k}$ with respect to the term orderings $\prec_1$ and $\prec_2$ are equal if and only if $\prec_1$ and $\prec_2$ induce the same $(k, 2, \ldots, 2, 1)$-block variable ranking.

The `if' direction is established in Lemma~\ref{lem:Invariance_G_n_k}.

For the `only if' direction, consider term orderings $\prec_1$ and $\prec_2$ that induce different $(k, 2, \ldots, 2, 1)$-block variable rankings. There exist permutations $\sigma_1, \sigma_2 \in S_n$ such that the Gröbner bases for $\prec_1$ and $\prec_2$ are $G_{n,k}^{\sigma_1}$ and $G_{n,k}^{\sigma_2}$, respectively.
Then, there exists an earliest block from the left where the rankings differ. Let $Y$ be the union of the common blocks to the left in both rankings. If $Y$ is empty, the rankings differ at the first sets, which we denote as $\{w_1,\ldots,w_k\}$ for $\prec_1$ and $\{z_1,\ldots,z_k\}$ for $\prec_2$. Since the degree $k$ squarefree components of $\ini_{\prec_1}(I_{n,k})$ and $\ini_{\prec_2}(I_{n,k})$ are given by $x_{w_1}\cdots x_{w_k}$ and $x_{z_1}\cdots x_{z_k}$, respectively, and these are different, we conclude that $G_{n,k}^{\sigma_1} \neq G_{n,k}^{\sigma_2}$.

Now, assume that $Y$ is not empty. Let the next block in the ranking induced by $\prec_1$ be $\{w_1,w_2\}$ and the corresponding block for $\prec_2$ be $\{z_1,z_2\}$. We know that $\{w_1,w_2\} \neq \{z_1,z_2\}$. By Theorem~\ref{thm:GBasis}, there exists a degree $d$ (determined by $|Y|$) such that the leading terms $\ini(g_{A,n,k})$ of degree $d$ in $G_{n,k}^{\sigma_1}$ are divisible by $w_1 w_2$ and otherwise only by variables in $Y$. In $G_{n,k}^{\sigma_2}$, a similar statement holds, with $z_1 z_2$ replacing $w_1 w_2$. Thus, the degree $d$ squarefree components of $\ini_{\prec_1}(I_{n,k})$ and $\ini_{\prec_2}(I_{n,k})$ differ, hence $G_{n,k}^{\sigma_1} \neq G_{n,k}^{\sigma_2}$, which completes the proof.
\end{proof}

\begin{remark}\label{rem:InitialIdealsBettiNumbers}
For any given term ordering, Lemma~\ref{lem:Invariance_G_n_k} states that the ordering $x_{\sigma(1)} \succ \cdots \succ x_{\sigma(n)}$ for some permutation $\sigma \in S_n$ uniquely determines the Gröbner basis of $I_{n,k}$ for that ordering. In fact, knowledge of the induced block variable ranking is sufficient. Therefore, all Gröbner bases of $I_{n,k}$ take the form $G_{n,k}^{\sigma}$ for some permutation $\sigma$, and all initial ideals of $I_{n,k}$ are equivalent up to renumbering of the variables. This implies that they share the same (multigraded) Betti numbers.
\end{remark}

\begin{example}\label{ex:N_n_k_for_n=5,6,7_k=2,3}
The number of distinct reduced Gröbner bases for the ideals $I_{n,k}$, denoted by $N_{n,k}$, is given in some examples as 
    \begin{align*}
        N_{5,2}=&\binom{5}{2,2,1}
        =30,\quad\quad\ 
        N_{6,2}=\binom{6}{2,2,2}
        =90,\quad
        N_{7,2}=\binom{7}{2,2,2,1}=
        630,\\
        N_{5,3}=&\binom{5}{3,2}
        =10,\quad\quad\quad
        N_{6,3}=\binom{6}{3,2,1}
        =60,\quad
        N_{7,3}=\binom{7}{3,2,2}=
        210.\\
    \end{align*}
\end{example}

\section{The Lefschetz properties}

We now apply the results from the previous section to study the Lefschetz properties. Recall that an Artinian standard graded algebra $A$ has the weak Lefschetz property (WLP) if there exists a linear element $\ell$ such that the multiplication map $\cdot \ell: A_i \to A_{i+1}$ has maximal rank for all $i$, that is, the map is either surjective or injective. Similarly, $A$ has the strong Lefschetz property (SLP) if there exists a linear element $\ell$ such that the multiplication map $\cdot \ell^k: A_i \to A_{i+k}$ has maximal rank for all $i$ and $k$. For an introduction to the Lefschetz properties, see \cite{book, atour}.

\subsection{A new proof of the fact that the squarefree algebra has the SLP}

As a first application of our Gr\"obner basis results to Lefschetz properties, we can show that the squarefree algebra has the SLP.

\begin{theorem}
The squarefree algebra has the SLP, and the algebra $R/I_{n,k}$ is thin. 
\end{theorem}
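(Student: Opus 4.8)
The plan is to derive both statements from the Hilbert series computation in Corollary~\ref{cor:Hilbert_series}. Recall that the strong Lefschetz property for the monomial complete intersection $R/(x_1^2,\ldots,x_n^2)$ is equivalent to the statement that for a general linear form $\ell$ and every $k$, the multiplication map $\cdot\ell^k$ has maximal rank in every degree; by semicontinuity it suffices to check this for the specific linear form $\ell = x_1+\cdots+x_n$, and by a standard argument (see the discussion in the introduction around Stanley's and Watanabe's theorem) this maximal-rank condition holds for all $k$ if and only if the Hilbert series of $S = R/(x_1^2,\ldots,x_n^2,(x_1+\cdots+x_n)^k) = R/I_{n,k}$ equals the truncation $\left[\frac{(1-t^2)^n(1-t^k)}{(1-t)^n}\right] = \left[(1+t)^n(1-t^k)\right]$ for every $k$.

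First I would make this equivalence precise: the cokernel of $\cdot\ell^k : (R/(x_1^2,\ldots,x_n^2))_{d-k} \to (R/(x_1^2,\ldots,x_n^2))_d$ is exactly $(R/I_{n,k})_d$, so the map is surjective in degree $d$ precisely when $\dim_\kb (R/I_{n,k})_d = \max\{\dim(R/(x_1^2,\ldots,x_n^2))_d - \dim(R/(x_1^2,\ldots,x_n^2))_{d-k},\,0\}$, and injectivity in the complementary range follows by the symmetry of the Hilbert function of the Gorenstein algebra $R/(x_1^2,\ldots,x_n^2)$ together with a rank count. Thus proving the SLP for the squarefree algebra reduces to showing that the Hilbert series of $R/I_{n,k}$ is the claimed truncation for all $n$ and all $k\geq 2$ (the cases $k=0,1$ being trivial). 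But this is exactly the content of Corollary~\ref{cor:Hilbert_series}, whose proof rests on the counting argument of Lemma~\ref{lem:CountingArgument} and Fröberg's inequality. So the SLP follows immediately, and the assertion that $R/I_{n,k}$ is thin is simply the statement that its Hilbert series equals the Fröberg bound $\left[\frac{\prod(1-t^{a_i})}{(1-t)^n}\right]$ — which is again Corollary~\ref{cor:Hilbert_series}, since $\left[(1+t)^n(1-t^k)\right] = \left[\frac{(1-t^2)^n(1-t^k)}{(1-t)^n}\right]$.

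The main obstacle is not any single hard estimate — all the real work (the lattice-path bijections, the induction in Lemma~\ref{lem:CountingArgument}, and the invocation of Fröberg's lower bound) has already been done — but rather correctly packaging the classical equivalence "SLP for $R/(x_1^2,\ldots,x_n^2)$ $\iff$ $R/I_{n,k}$ thin for all $k$'' and being careful about semicontinuity: one must note that maximal rank for the \emph{specific} form $x_1+\cdots+x_n$ implies maximal rank for a general form, and that it suffices to treat this one form because after a linear change of coordinates any power of a general linear form can be brought to $(x_1+\cdots+x_n)^k$. I would write the proof in essentially this order: (i) recall the cokernel description and the reduction of SLP to a Hilbert-series identity; (ii) cite Corollary~\ref{cor:Hilbert_series} to obtain that identity for every $k$; (iii) conclude the SLP and, as an immediate byproduct, that $R/I_{n,k}$ is thin.
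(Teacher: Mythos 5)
Your proof is correct and follows essentially the same route as the paper: both reduce the SLP for the squarefree algebra to the Hilbert series identity of Corollary~\ref{cor:Hilbert_series} via the standard equivalence between maximal rank of $\cdot\ell^k$ and the thinness of $R/I_{n,k}$. Your extra remarks on the cokernel description, Gorenstein symmetry, and semicontinuity make explicit what the paper leaves as a known equivalence, but the substance is identical.
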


\begin{proof}
Recall that  an algebra defined by an ideal of forms of degree $a_1,\dots, a_m$ is thin if it has a Hilbert series given by
$$\left[\frac{\prod_{i=1}^{m}(1-t^{a_{i}})}{(1-t)^n}\right].$$ 
Moreover, the squarefree algebra $R/(x_1^2,\dots, x_n^2)$ has the strong Lefschetz property if and only if the algebras $R/I_{n,k}$ are thin for all $k$. That is, we want the Hilbert series of $R/I_{n,k}$ to be given by
\[
\left[\frac{\prod_{i=1}^{n}(1-t^{2})}{(1-t)^n}(1-t^k)\right] = \left[(1+t)^n(1-t^k) \right].
\]
However, this is precisely the statement of Corollary \ref{cor:Hilbert_series}. Therefore, all algebras $R/I_{n,k}$ are thin, and the squarefree algebra has the SLP.
\end{proof}

As mentioned in the introduction, the SLP for an Artinian monomial complete intersection can be derived from the squarefree algebra. The argument given by Hara and Watanabe in \cite{Watanabe_Boolean}
proceeds as follows. Suppose we wish to show that the algebra
$$B=\mathbf{k}[y_1,\dots, y_m]/(y_1^{d_1+1},\dots, y_m^{d_m+1})$$ has the SLP. The idea is then to embed $B$ into the squarefree algebra $$A=\mathbf{k}[x_1,\dots, x_n]/(x_1^2,\dots, x_n^2)$$ where $n=d_1+\cdots + d_m$ is chosen so that $A$ and $B$ have the same socle degree. Now define a map $\phi:\mathbf{k}[y_1,\dots, y_m]\to A$ by \[
\phi(y_i) = x_{d_1+d_2+\cdots + d_{i-1} + 1} + x_{d_1+d_2+\cdots + d_{i-1} + 2} + \cdots + x_{d_1+d_2+\cdots + d_{i-1} + d_i}.
\]
One can then verify that $\ker(\phi)=(y_1^{d_1+1},\dots, y_m^{d_m+1})$, so $\phi$ induces an inclusion of $B$ into $A$. Thus, if $\ell=y_1+\cdots + y_r$ is a linear form on $B$, then $\phi(\ell)=x_1+\cdots + x_n = \ell'$ is a linear form on $A$ where we know that $\cdot (\ell')^{n-2i}:A_i \to A_{n-i}$ is bijective. The  commutative diagram in Figure \ref{fig:com_diagram} 
\begin{figure}[h]
\begin{center}   
\begin{tikzcd}
A_i \arrow[r, "(\ell')^{n-2i}"]                      & A_{n-i}                         \\
B_i \arrow[u, "\phi", hook] \arrow[r, "\ell^{n-2i}"] & B_{n-1} \arrow[u, "\phi", hook]
\end{tikzcd}
\end{center}
\caption{The commutative diagram used to show that $A$ has the SLP.}    
\label{fig:com_diagram}
\end{figure}
then forces $\cdot \ell^{n-2i}:A_i \to A_{n-i}$ to be injective. Hence, it has full rank and $A$ has the SLP.

While the SLP for Artinian monomial complete intersections was established by Stanley and Watanabe in the 1980s, the corresponding result for squarefree algebras dates back to the 1970s, originally studied in the context of combinatorial properties of zero-one matrices. Fröberg and Hollman \cite{Hollman} credit this earlier work to Kantor \cite{Kantor}, Graver and Jurkat \cite{Graver}, and Wilson \cite{Wilson}, with these references initially brought to their attention by Bernt Lindström.

Besides the results from the 1970s, we are aware of the proof by Hara and Watanabe for the case $d=2$, an argument by Ikeda \cite{Ikeda} from the 1990s, and a more recent proof by Phuong and Tran \cite{Phuong}. Therefore, to the best of our knowledge, our proof represents the seventh contribution to the specific case of $d=2$.

\subsection{The WLP for powers of general linear forms in the squarefree algebra}
Let $R=\kb[x_1,\ldots,x
_n]$. Given that $R/(\ell_1^{a_1},\ldots,\ell_{n+1}^{a_{n+1}}) $ is a thin algebra for general $\ell_i$, it is natural to ask if $R/(\ell_1^d,\ldots,\ell_{n+2}^d)$ is also thin. 
However, Fr\"oberg and Hollman \cite{Hollman} showed that this is not always the case. They showed this by computing the Hilbert series for the case $n=5,a_1=\ldots=a_7=2$, which equals
$1+ 5t +8t^2+t^3$, differing from the expected series $\left[\frac{(1-t^2)^7}{(1-t)^5)}\right] = 1 + 5t + 8t^2$. This discrepancy was established through a symbolic computation in the algebra $$\mathbf{k}[x_1,\ldots,x_5]/(x_1^2,x_2^2,x_3^2,x_4^2,x_5^2,(x_1+\cdots+x_5)^2, (a_1x_1+\cdots+a_5x_5)^2),$$ which was a computationally intensive task given the available resources in 1994.

In 2000, Cruz and Iarrobino \cite{CruzIarrobino} conjectured that the socle degree for the case $a_1=\cdots=a_{n+2}=2$ equals $n/2$ when $n$ is even, and $(n+1)/2$ when $n$ is odd. Moreover,  they conjectured that the dimension in the socle equals $2^{n/2}$ in the case of an even number of variables, and $1$  in the odd case. This conjecture  was settled in 2008 by Sturmfels and Xu \cite{SturmfelsXu}.  It is straightforward to show that their result implies that 
$R/(\ell_1^{2},\ldots,\ell_{n+2}^{2})$ is thin if and only if $n \in \{1,2,3,4,6\}$. Thus $R/(\ell_1^{2},\ldots,\ell_{n+1}^{2})$ has the WLP if and only if $n \in \{1,2,3,4,6\}$.

In 2012, Migliore, Miro-R\'oig, and Nagel \cite{Miglioreetal} showed that when $n\geq4$ is even, the algebra  $R/(\ell_1^d,\ldots,\ell_{n+1}^d)$ fails the WLP for all $d$ except for $(n,d) = (4,2)$, and conjectured that it should fail the WLP when $n \geq 9$ is odd. The conjecture was settled by Boij and the second author in 2023 \cite{boijlundqvist}, although many special cases had been settled earlier than that, for instance the case $d$ large enough by Nagel and Trok \cite{nageltrok}. The proof relies on Macaulay's inverse system and a reduction to the case $d=2$, from which it is shown that the WLP fails because of surjectivity.

In this section we make an attempt on the non-equigenerated case and give a complete characterization of $n,k$ for which the algebra $R/(\ell_1^2,\dots, \ell_n^2, \ell_{n+1}^k)$ defined by powers of general linear forms $\ell_1,\dots, \ell_{n+1}$ has the WLP.

Also, in this case, it turns out that the question of the WLP can be reduced to examining the multiplication map by $\ell_{n+2}^k$ on $R/(\ell_1^2, \dots, \ell_n^2, \ell_{n+1}^2)$. However, contrary to the result for the equigenerated case, failure of the WLP is due to the lack of injectivity, not surjectivity. In particular, this means that we must find non-trivial elements in the kernel, which turns out to be technically challenging. As a by-product, we obtain non-trivial identities in the squarefree algebra, which are interesting in their own right (see Proposition \ref{prop:syzygy_relation} and Proposition \ref{prop:syz_relation_even}).

\medskip
The classification result that we will prove is stated in the following theorem, whose proof will be presented in the end of this section.

\begin{theorem}\label{thm:WLP_linear_forms}
Let $\mathbf{k}$ be a field of characteristic zero and fix integers $n,k\geq 2$. Then for general linear forms $\ell_1,\dots, \ell_{n+1}$, the algebra $\mathbf{k}[x_1,\dots, x_n]/(\ell_1^2,\dots, \ell_n^2, \ell_{n+1}^k)$ has the weak Lefschetz property if and only if \[
\begin{cases}
k\geq \frac{n-3}{2} & \text{ for } n \text{ odd,}\\
k\geq \frac{n}{2} & \text{ for } n \text{ even.}\\
\end{cases}
\] 
\end{theorem}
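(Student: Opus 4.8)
The first reduction is to pass from the ``abstract'' algebra $\mathbf{k}[x_1,\dots,x_n]/(\ell_1^2,\dots,\ell_n^2,\ell_{n+1}^k)$ defined by $n+1$ general linear forms to the concrete presentation used throughout the paper. After a general linear change of coordinates we may assume $\ell_i = x_i$ for $i=1,\dots,n$ and $\ell_{n+1}=x_1+\cdots+x_n$, so that the algebra is $R/I_{n,k}$; the question of whether $R/(\ell_1^2,\dots,\ell_n^2,\ell_{n+1}^k)$ has the WLP then becomes the question of whether multiplication by a further general linear form $\ell_{n+2}$ (equivalently, by a general power-free form, but here a linear one suffices since we are asking for the WLP) on $R/I_{n,k}$ has maximal rank in every degree. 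I would first record, using the Hilbert series $[(1+t)^n(1-t^k)]$ from Corollary~\ref{cor:Hilbert_series}, exactly in which degree $i$ the multiplication map $\cdot\ell_{n+2}\colon (R/I_{n,k})_i\to(R/I_{n,k})_{i+1}$ can possibly fail maximal rank, i.e.\ the degree where the Hilbert function switches from increasing to decreasing (or is flat), and check that for $k$ in the asserted range the Hilbert function is ``unimodal with the right plateau'' so that a single general linear form has a chance of working — and conversely that for $k$ below the range the numerology already forces a failure is \emph{not} automatic, so the failure really has to be produced by exhibiting a kernel element.

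\textbf{The positive direction (WLP holds for $k$ large).} Here the plan is to use the Gröbner-deformation principle of Conca~\cite{Conca} and Wiebe~\cite{Wiebe}: if the initial ideal $\ini(I_{n,k}+(\ell))$ for a suitable generic-enough linear $\ell$ has the expected Hilbert function, or more precisely if $R/\ini(I_{n,k})$ has the WLP witnessed by the last variable, then $R/I_{n,k}$ has the WLP. Concretely, by Theorem~\ref{thm:GBasis} we know $\ini(I_{n,k})$ explicitly: it is generated by $x_1^2,\dots,x_n^2$ together with the monomials $x_A$ for $A\in\mathcal A$. One then wants to show that multiplication by $x_n$ (or by $x_{n-1}+x_n$, matching the block structure) on $R/\ini(I_{n,k})$ has maximal rank in the critical degree. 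Because the initial ideal is so combinatorial — squares plus a ``staircase'' of squarefree monomials cut out by the lattice-path / line $y=x+k$ condition — this should reduce to a purely combinatorial bijection/injection between squarefree monomials of consecutive degrees avoiding the staircase, which I would build by a matching argument (pair a monomial $u$ of degree $i$ not in $\ini(I_{n,k})$ with $u\cdot x_j$ for the smallest admissible $j$ keeping it outside the initial ideal), exactly in the spirit of the counting in Lemma~\ref{lem:CountingArgument}. The constraint $k\ge (n-3)/2$ (odd) resp.\ $k\ge n/2$ (even) is precisely what makes such a matching exist in the critical degree.

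\textbf{The negative direction (WLP fails for $k$ small), the main obstacle.} This is where the real work lies, and the paper itself flags it: for small $k$ the Hilbert-series numerology does \emph{not} obstruct maximal rank, so failure of the WLP must come from a genuine non-trivial kernel element for the multiplication map $\cdot\ell_{n+2}^?$ — and since failure is by injectivity rather than surjectivity, one has to \emph{construct} an explicit syzygy-type relation in the squarefree algebra $R/(x_1^2,\dots,x_n^2)$ showing that some nonzero class is annihilated by a product of (powers of) two general linear forms. The plan is to produce, for $n$ and $k$ below the stated bounds, an explicit element $F$ in the appropriate degree of $R/I_{n,k}$ with $\ell_{n+2}\cdot F \equiv 0$, using the relations among powers of pairs of general linear forms in the squarefree algebra that the introduction promises in Proposition~\ref{prop:syzygy_relation} and Proposition~\ref{prop:syz_relation_even} (split into the $n$ odd and $n$ even cases). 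Concretely one works in $R/(\ell_1^2,\dots,\ell_n^2,\ell_{n+1}^2)$ — the reduction to the quadratic case $k=2$-type bottleneck — identifies its socle degree and socle dimension (the Cruz--Iarrobino/Sturmfels--Xu numbers $2^{n/2}$ or $1$), and exploits that when $k$ is too small relative to $n$ the map $\cdot\ell_{n+2}^k$ hits a degree where the source is already ``too big'' on the kernel side. The hard part — the main obstacle — is thus writing down the kernel element explicitly and \emph{proving} it is nonzero modulo $I_{n,k}$; this is a linear-algebra problem over the squarefree monomials, and I expect one has to exhibit a clever closed-form combination (likely again symmetric-function-flavored, e.g.\ built from elementary symmetric polynomials on complementary variable blocks) and verify the identity by a generating-function or finite-difference computation analogous to Lemma~\ref{lem:BinomialCoeffsAlternatingSums}. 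Once both directions are in place, the theorem follows by assembling the cases $n$ even / $n$ odd and $k$ above / below the threshold.
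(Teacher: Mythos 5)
Your high-level plan is correct in identifying the two key ingredients — the Conca--Wiebe Gröbner deformation for the positive direction, and the explicit syzygies of Propositions~\ref{prop:syzygy_relation} and~\ref{prop:syz_relation_even} (together with a lifting lemma) for the negative direction — but there is a genuine gap in the negative direction, and the positive direction is organized rather differently from the paper.

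The gap: you write ``failure of the WLP must come from a genuine non-trivial kernel element \ldots and since failure is by injectivity rather than surjectivity, one has to construct an explicit syzygy-type relation,'' and your whole negative-direction plan rests on producing a non-zero element killed by $\ell^k$. But for many $(n,k)$ in the failure region the source of the relevant map $\cdot\ell^{k}\colon A_{p+1-k}\to A_{p+1}$ (after the paper's reduction, with $A=\mathbf{k}[x_1,\ldots,x_{n-1}]/I_{n-1,2}$ and $n=2p+2$) has \emph{larger} dimension than the target — e.g.\ for $n=12$, $p=5$, the Hilbert function of $A$ is $1,11,54,154,275,297,132$, so for $k=2,3$ the map $A_{p+1-k}\to A_{p+1}$ goes from dimension $275$ resp.\ $154$ to dimension $132$. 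In those degrees maximal rank means \emph{surjectivity}, so producing a kernel element proves nothing; the map is expected to have a kernel. The paper therefore must additionally prove that these maps are not surjective, which it does by citing Boij and Lundqvist's inverse-system computation (\cite[Theorem 5.2]{boijlundqvist}) showing $A/(\ell^k)$ is non-zero in the socle degree. Without this second half, your argument only establishes failure of the WLP for those $k$ in the failure range where the source dimension is at most the target dimension, leaving the remaining $k$ uncovered.

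Two smaller points of divergence worth noting. First, you do not anticipate the paper's Lemma~\ref{lem:full_rank_switch}, which replaces ``WLP of $R/I_{n,k}$ via a general linear form'' with ``$\cdot\ell^k$ has full rank on $\mathbf{k}[x_1,\ldots,x_{n-1}]/I_{n-1,2}$.'' This is not cosmetic: after this switch, most of the positive direction becomes trivial (the socle degree of that quotient is $(n-1)/2$ or $n/2$, so for $k$ in the asserted range the maps $\cdot\ell^k$ either start in degree $0$ or land in a zero space), and only the single boundary case $n=2k+3$ requires real work (Lemma~\ref{lem:Injective_WLP}). Your plan to build an explicit matching on $R/\ini(I_{n,k})$ is therefore doing unnecessary work and is also imprecise about which linear form Conca--Wiebe lets you deform to: the reduction (via Migliore--Mir\'o-Roig--Nagel and Nicklasson) is to $\ell=x_1+\cdots+x_n$ on the monomial quotient, not to a single variable $x_n$ or a two-variable block $x_{n-1}+x_n$. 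Second, the ``lifting'' of the syzygy in degree $p$ (resp.\ the degree-$2$ kernel form) down to lower degrees, which you gesture at, is exactly the role of Lemmas~\ref{lem:Injective_WLP} and~\ref{lem:Lift_kernel_deg2}; this part of your plan does match the paper.
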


\begin{remark}\label{rem:equiv}
\begin{itemize}
\item[{\rm (i)}] As noted earlier, a change of variables gives the isomorphism
\begin{equation*}\label{eq:simplification}
    \mathbf{k}[x_1, \ldots, x_n]/(\ell_1^{2}, \ldots, \ell_n^2, \ell_{n+1}^{k}) \cong 
    \mathbf{k}[x_1, \ldots, x_n]/(x_1^{2}, \ldots, x_n^{2}, (x_1 + \cdots + x_n)^{k}).
\end{equation*}
Thus, Theorem~\ref{thm:WLP_linear_forms} provides a characterization of the integers $n$ and $k$ for which the ideal $I_{n,k}$ defines an algebra with the weak Lefschetz property.

\item[{\rm (ii)}] Note that multiplication by a form $f$ of degree $d$ on an algebra $A$ has full rank if and only if 
\[
\mathrm{HS}(A/(f);t) = [(1 - t^d) \mathrm{HS}(A;t)],
\]
where the square brackets indicate truncation of the polynomial at the first non-positive coefficient.
\end{itemize}
\end{remark}

In the following lemmas, it is sometimes useful to work with the ideal $I_{n-1,2}$ in place of $I_{n,k}$ to streamline the analysis and simplify computations.

\begin{lemma}\label{lem:full_rank_switch}
The algebra $\mathbf{k}[x_1,\ldots,x_n]/(x_1^{2},\ldots,x_n^{2}, (x_1+\cdots+x_n)^{k})$ has the WLP if and only if, for a general linear form $\ell$ in $\mathbf{k}[x_1,\ldots,x_{n-1}]$, the multiplication maps by $\ell^k$ yield full rank maps on $A=\mathbf{k}[x_1,\ldots,x_{n-1}]/(x_1^{2},\ldots,x_{n-1}^{2}, (x_1+\cdots+x_{n-1})^2)$.
\end{lemma}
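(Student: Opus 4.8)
The claim is a dimension-counting equivalence via Macaulay's inverse system / apolarity, so the plan is to reduce both algebras to their "general linear form" incarnations and then peel off one variable. By Remark~\ref{rem:equiv}(i), $\mathbf{k}[x_1,\ldots,x_n]/(x_1^2,\ldots,x_n^2,(x_1+\cdots+x_n)^k)$ is isomorphic to $R/(\ell_1^2,\ldots,\ell_n^2,\ell_{n+1}^k)$ for general linear forms $\ell_1,\ldots,\ell_{n+1}$ in $\mathbf{k}[x_1,\ldots,x_n]$. The WLP for this algebra means that for a further general linear form $\ell_{n+2}$, the map $\cdot\,\ell_{n+2}\colon (R/(\ell_1^2,\ldots,\ell_n^2,\ell_{n+1}^k))_i \to (\cdots)_{i+1}$ has full rank in every degree; equivalently, by Remark~\ref{rem:equiv}(ii), that $R/(\ell_1^2,\ldots,\ell_n^2,\ell_{n+1}^k,\ell_{n+2})$ is thin. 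Since $\ell_{n+2}$ is a general linear form, we may change coordinates so that $\ell_{n+2}=x_n$; quotienting by $x_n$ and eliminating it identifies this with $\mathbf{k}[x_1,\ldots,x_{n-1}]/(\bar\ell_1^2,\ldots,\bar\ell_n^2,\bar\ell_{n+1}^k)$ where the $\bar\ell_j$ are the images of the $\ell_j$, which remain general linear forms in $n-1$ variables. Thus the WLP of the original algebra is equivalent to full rank of multiplication by $\bar\ell_{n+1}^k$ on $\mathbf{k}[x_1,\ldots,x_{n-1}]/(\bar\ell_1^2,\ldots,\bar\ell_n^2)$.

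The remaining point is that among $n$ general linear forms in $n-1$ variables, the squarefree algebra $\mathbf{k}[x_1,\ldots,x_{n-1}]/(\bar\ell_1^2,\ldots,\bar\ell_{n-1}^2)$ already has the SLP (this is the theorem just proved), so the extra relation $\bar\ell_n^2$ is the "strong Lefschetz square"; concretely one may take the first $n-1$ of the general forms to be $x_1,\ldots,x_{n-1}$ and $\bar\ell_n = x_1+\cdots+x_{n-1}$. Hence full rank of $\cdot\,\bar\ell_{n+1}^k$ on $\mathbf{k}[x_1,\ldots,x_{n-1}]/(\bar\ell_1^2,\ldots,\bar\ell_n^2)$ is exactly the statement that, for a general linear form $\ell$, multiplication by $\ell^k$ has full rank on $A=\mathbf{k}[x_1,\ldots,x_{n-1}]/(x_1^2,\ldots,x_{n-1}^2,(x_1+\cdots+x_{n-1})^2) = R'/I_{n-1,2}$, which is the asserted reformulation. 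One should double-check that ``general'' is preserved at each step: genericity of $\ell_{n+2}$ lets us assume it is $x_n$ after a linear change of coordinates fixing the (already generic) $\ell_1,\ldots,\ell_{n+1}$, and the images of generic forms under a generic projection are again generic, so the quantifier ``there exists a linear form'' matches ``for a general linear form'' throughout.

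\textbf{Main obstacle.} The only subtle point is the reduction $\ell_{n+2}\rightsquigarrow x_n$ and the elimination of $x_n$: one must argue that quotienting $R/(\ell_1^2,\ldots,\ell_{n+1}^k)$ by a general linear form $x_n$ yields the polynomial ring in $n-1$ variables modulo the \emph{images} of the generators, and that these images are powers of \emph{general} linear forms in those $n-1$ variables of the same degrees. This is the standard fact that a general hyperplane section of $R/(\ell_1^{a_1},\ldots,\ell_{m}^{a_{m}})$ is $\mathbf{k}[x_1,\ldots,x_{n-1}]/(\bar\ell_1^{a_1},\ldots,\bar\ell_{m}^{a_{m}})$ with the $\bar\ell_j$ general — it follows because the map $R/(x_n) \to \mathbf{k}[x_1,\ldots,x_{n-1}]$ sending $x_i\mapsto x_i$ is an isomorphism of graded rings carrying $\ell_j \bmod x_n$ to a linear form whose coefficients are a generic point of the corresponding parameter space. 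Everything else is a direct translation through Remark~\ref{rem:equiv} and the change-of-coordinates isomorphism, so no further computation is needed.
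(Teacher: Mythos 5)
Your proof is correct and follows essentially the same strategy as the paper's: both identify the quotient of $R/I_{n,k}$ by a general linear form with $A/(\ell^k)$ for a general $\ell$ (you via a general hyperplane section in the general-forms picture, the paper via an explicit change of variables eliminating $x_n$), and both then convert the two full-rank statements into the same Hilbert-series condition via Remark~\ref{rem:equiv}(ii). The only presentational difference is that you phrase the Hilbert-series bookkeeping through ``thinness'' at two points, whereas the paper writes out the chain of truncated-series equalities explicitly.
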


\begin{proof}
For a general linear form $\ell'$ in $n$ variables, we have that
\begin{align*}
&\mathbf{k}[x_1,\ldots,x_n]/(x_1^{2},\ldots,x_{n}^{2}, (x_1+\cdots+x_n)^{k}, \ell') \\
&= \mathbf{k}[x_1,\ldots,x_n]/(x_1^{2},\ldots,x_{n}^{2}, (x_1+\cdots+x_n)^{k}, a_1 x_1 + \cdots + a_n x_n)\\ 
&\cong \mathbf{k}[x_1,\ldots,x_n]/(x_1^{2},\ldots,x_{n}^{2}, (a_1' x_1 + \cdots + a_n' x_n)^{k}, x_1+\cdots+x_n)\\ 
&\cong \mathbf{k}[x_1,\ldots,x_{n-1}]/(x_1^{2},\ldots,x_{n-1}^{2}, (x_1+\cdots+x_{n-1})^2, (a_1'' x_1 + \cdots + a_{n-1}'' x_{n-1})^{k})
\end{align*}
for some generic coefficients $a_i, a_i'$ and $a_i''$. We denote the last algebra as $B$. 
Since we know that 
$\mathrm{HS}(\mathbf{k}[x_1,\ldots,x_n]/I_{n,k};t)=[(1-t^k)(1+t)^n]$, 
we conclude that multiplication by $\ell^k$ on the algebra $A$ 
has full rank if and only if
\begin{align*}
\mathrm{HS}(B;t) &= [(1-t^k)\mathrm{HS}(A
;t)] 
=[(1-t^k)(1-t^2)(1+t)^{n-1}] \\
&=[(1-t)(1-t^k)(1+t)^{n}]
=[(1-t)\mathrm{HS}(\mathbf{k}[x_1,\ldots,x_n]/I_{n,k};t)].
\end{align*}
However, $B$ has this Hilbert series if and only if multiplication by a general linear form $\ell'$ has full rank on $\mathbf{k}[x_1, \ldots, x_n] / I_{n,k}$. We conclude that $\mathbf{k}[x_1, \ldots, x_n] / I_{n,k}$ has the WLP if and only if multiplication by $\ell^k$ has full rank on $A$, 
as required.
\end{proof}

In the upcoming lemmas, we will need some notation. For a set $S=\{s_1,\dots, s_p\}$, we denote $a_S=a_{s_1}\cdots a_{s_p}$ and $x_S=x_{s_1}\cdots x_{s_p}$.
For any $(k-1)$ subset $T\subseteq[n]$, we define $\mathcal{T}_i=\{S\subseteq [n]:\ |S|=|T|=k-1,\ |S\cap T|=i\}$. 

 \medskip
 
The following proposition is crucial for establishing the failure of the WLP.
\begin{proposition}\label{prop:syzygy_relation}
Let $n = 2p + 1$ 
for some $p \geq 1$, and let $\ell = a_1 x_1 + \cdots + a_n x_n$ be a general linear form. Then there exists another linear form $\ell'$ and a degree $(p-1)$ form $g$ such that in $\mathbf{k}[x_1, \ldots, x_n] / (x_1^2, \ldots, x_n^2)$, we have that
\[
\ell^p \ell' = (x_1 + \cdots + x_n)^2 g.
\]
\end{proposition}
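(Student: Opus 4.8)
The plan is to work entirely in the squarefree algebra $Q=\mathbf{k}[x_1,\dots,x_n]/(x_1^2,\dots,x_n^2)$, where $n=2p+1$, and to exhibit the identity $\ell^p\ell'=(x_1+\cdots+x_n)^2 g$ by a dimension count on the relevant graded pieces. Write $L=x_1+\cdots+x_n$ and recall from Corollary~\ref{cor:Hilbert_series} (applied with $k=2$, in $n$ variables) that $\mathrm{HS}(Q/(L^2);t)=[(1+t)^n(1-t^2)]$, so in particular $Q/(L^2)$ has socle degree $p$ and its top degree piece $(Q/(L^2))_{(p)}$ has dimension $\binom{n}{p}-\binom{n}{p-2}$; meanwhile $\dim Q_{(p)}=\binom{n}{p}$ and $\dim Q_{(p+1)}=\binom{n}{p+1}=\binom{n}{p}$. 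The strategy is: first show that the multiplication map $\cdot\,\ell^p\colon Q_{(0)}\to Q_{(p)}$ is injective (trivially, since $\ell^p\neq 0$ in $Q$ for general $\ell$ — this follows because $Q$ has the SLP, just established), so $\ell^p$ spans a line in $Q_{(p)}$; then I must locate $\ell^p$ relative to the subspace $(L^2)_{(p)}=L^2\cdot Q_{(p-2)}\subseteq Q_{(p)}$.

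The heart of the argument is the claim that $\ell^p$, after multiplication by a suitable linear form $\ell'$, lands inside $L^2\cdot Q_{(p-1)}$ as an element of $Q_{(p+1)}$. Equivalently, I want to show the composite
\[
Q_{(1)}\xrightarrow{\;\cdot\,\ell^p\;} Q_{(p+1)}\twoheadrightarrow \bigl(Q/(L^2)\bigr)_{(p+1)}
\]
has a nonzero kernel. Since $Q/(L^2)$ has socle degree $p$, the target $\bigl(Q/(L^2)\bigr)_{(p+1)}$ is \emph{zero}. Hence \emph{every} linear form $\ell'$ satisfies $\ell^p\ell'\in (L^2)_{(p+1)}=L^2\cdot Q_{(p-1)}$, and in particular there exists $g\in Q_{(p-1)}$ with $\ell^p\ell'=L^2 g$ — for any choice of $\ell'$. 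This is the clean way to see existence; the slightly delicate point is only to make sure the conclusion is nontrivial, i.e. that one can arrange $\ell^p\ell'\neq 0$ (and hence $g\neq 0$). For that, pick $\ell'$ so that $\ell^p\ell'\neq 0$ in $Q$, which is possible because $Q$ satisfies the SLP: multiplication $\cdot\,\ell^{p+1}\colon Q_{(0)}\to Q_{(p+1)}$ need not be injective (indeed $\binom{n}{p+1}$ can be reached), but one can instead invoke that the pairing $Q_{(p)}\times Q_{(1)}\to Q_{(p+1)}$ is nondegenerate in the first slot on the line spanned by $\ell^p$ when $\ell$ is general; concretely, $\ell^p\cdot\ell\neq 0$ fails exactly when $\ell^{p+1}=0$, and for general $\ell$ we may simply take $\ell'$ to be a general linear form not annihilating $\ell^p$, which exists since $\ell^p\neq 0$ and $Q$ has no socle in degree $p-1<$ socle degree... — here one should instead observe $Q_{(p)}$ has socle only in the top degree $n$, so $\ell^p$ is not in the socle and some $\ell'$ moves it nontrivially.

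Thus the key steps, in order, are: (1) record $\mathrm{HS}(Q/(L^2);t)$ from Corollary~\ref{cor:Hilbert_series} and read off that the socle degree is $p$; (2) conclude $\bigl(Q/(L^2)\bigr)_{(p+1)}=0$, so $\ell^p\ell'\in L^2\cdot Q_{(p-1)}$ for every linear $\ell'$, giving the desired $g$; (3) choose $\ell'$ general enough that $\ell^p\ell'\neq 0$ in $Q$ — using that $\ell^p$ is a general degree-$p$ element (SLP) and hence not annihilated by a general linear form — so that $g\neq 0$ and the identity is nontrivial, with $\deg g = p-1$ as claimed. I expect step (3), ensuring the relation is nonzero, to be the only real obstacle: it requires knowing that a general power $\ell^p$ is not killed by every linear form, which should follow from the SLP of $Q$ (the map $\cdot\,\ell\colon Q_{(p)}\to Q_{(p+1)}$ has full rank for general $\ell$, and one checks $\ell^p$ is not in its kernel, e.g. because $\ell^p$ pairs nontrivially against a monomial complementary to its support) — but the honest verification that $\ell'$ can be taken \emph{linear} (rather than merely some form) and general, while keeping $\ell$ general, is the point that needs care, and I would do it by an explicit non-vanishing check of the coefficient of a well-chosen squarefree monomial in $\ell^p\ell'$.
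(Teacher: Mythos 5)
Your proposal hinges on the claim that $Q/(L^2)$, where $Q=\mathbf{k}[x_1,\dots,x_{2p+1}]/(x_1^2,\dots,x_{2p+1}^2)$, has socle degree $p$, so that $\bigl(Q/(L^2)\bigr)_{(p+1)}=0$ and the desired relation holds for \emph{every} linear form $\ell'$. This is not correct. Reading off the truncated Hilbert series $[(1+t)^{2p+1}(1-t^2)]$, the coefficient of $t^{p+1}$ is $\binom{2p+1}{p+1}-\binom{2p+1}{p-1}=\binom{2p+1}{p}-\binom{2p+1}{p-1}>0$, while the coefficient of $t^{p+2}$ is $\binom{2p+1}{p+2}-\binom{2p+1}{p}<0$. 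So the socle degree is $p+1$, not $p$: for $n=2p+1$ odd the last nonvanishing degree is $\lceil n/2\rceil=p+1$. (You can confirm this against the paper's proof of Theorem~\ref{thm:WLP_linear_forms}, where the algebra $A$ in $n-1$ variables is assigned socle degree $n/2$ when $n$ is even, i.e.\ $q+1$ when $n-1=2q+1$.)

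Because $\bigl(Q/(L^2)\bigr)_{(p+1)}\neq 0$, the map $\cdot\,\ell^p\colon \bigl(Q/(L^2)\bigr)_{(1)}\to\bigl(Q/(L^2)\bigr)_{(p+1)}$ is a map from a space of dimension $2p+1$ into a space of dimension $\binom{2p+1}{p+1}-\binom{2p+1}{p-1}$, which equals $5$ when $p=2$ and already exceeds $2p+1$ for $p\geq 3$. No dimension count can produce a kernel here; indeed the existence of a nonzero element in this kernel is exactly the content of the proposition and is precisely the failure of injectivity that the paper is going to lengths to establish. Asserting that the target vanishes therefore turns a nontrivial theorem into a tautology — a sign the approach cannot be right. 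The paper's proof is genuinely constructive: it writes down a specific $\ell'=\sum_i\bigl(pa_i-\sum_{j\neq i}a_j\bigr)a_ix_i$ and a specific $g$ whose coefficients $b_i$ solve a triangular linear system, and verifies coefficientwise that $\ell^p\ell'=L^2 g$ in $Q$. You would need an argument of that explicit character (or some other nontrivial input) rather than a socle-degree shortcut.

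Your concern in step (3) about arranging $\ell^p\ell'\neq 0$ is secondary: once you have an explicit nonzero $\ell'$ as in the paper, genericity of $\ell$ makes nonvanishing easy. The real missing idea is the construction of $\ell'$ and $g$; the vanishing of $\bigl(Q/(L^2)\bigr)_{(p+1)}$ that would have made existence automatic simply does not hold.
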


\begin{proof}
Let $B=\mathbf{k}[x_1, \ldots, x_n] / (x_1^2, \ldots, x_n^2)$. In $B$, we have
\[\ell^p = p!\sum_{T=\{i_1<\dots <i_p\}}a_Tx_T\quad\text{and}\quad (x_1+\cdots + x_n)^2 = 2\sum_{i<j}x_ix_j
\]
We will proceed using these expressions. We claim that we can choose 
\[{\ell'=\sum_{i=1}^{n}\big(pa_i -\sum_{\substack{j=1, j\neq i}}^{n}a_j\big)a_ix_i}\quad\text{and}\quad g=
\sum_{\substack{T\subseteq [n]\\ |T|=p-1}}\big(\sum_{i=0}^{p-1}b_i\sum_{\substack{S\in \mathcal{T}_i}}\frac{a_1\cdots a_n}{a_S}\big)x_T,\] 
where the coefficients $b_i$ are determined as solutions to the system~\eqref{eqn:syz_system} that will be determined later. First, we observe that in $A$, we can write $\ell^p\ell'$ as 
\begin{align*}
\ell^p\ell' &= \big(p!\sum_{T=\{i_1<\dots <i_p\}}a_T x_T 
\big) \big(\sum_{i=1}^{n}\big(pa_i -\sum_{\substack{j=1,j\neq i}}^{n}a_j\big)a_ix_i\big) \\
&= -(p+1)!\sum_{T=\{i_1<\dots <i_{k+1}\}}a_Tx_T\   
\big(\sum_{j\notin T 
}
a_j \big).
\end{align*}
The equality holds because each squarefree monomial $x_T$ of degree $p+1$ is formed such that, for any $x_s$ in its support, $a_s$ appears in 
$\textstyle{pa_i -\sum_{\substack{j=1, j\neq i}}^{n}a_j}$  
once with coefficient $p$, and $p$ times with coefficient $-1$. For any $x_s$ not dividing $x_T$,~$a_s$~appears $(p+1)$ times with coefficient $-1$, resulting in the final coefficient $-(p+1) \sum_{j \notin T} a_j$.

\medskip

Now, we verify that $g$ satisfies the desired condition. To see this, observe that
\begin{align*}
\sum_{T=\{i_1<\dots <i_{p+1}\}}a_Tx_T\ \big(\sum_{j\notin  T}a_j \big)=\sum_{T=\{i_1<\dots <i_{k+1}\}}\big(\!\!\!\!\sum_{\substack{S\subseteq [n]\\ |S\cap T|=0 \\ |S|=k-1}
}\frac{a_1\cdots a_n}{a_S} \big)x_T.
\end{align*}
Thus, in $g(x_1 + \cdots + x_n)^2$, we observe that to find the coefficient of $x_T$ for $T=\{i_1, \ldots i_{p+1}\}$, we need all the coefficients $(a_1 \cdots a_n)/{a_S}$ arising from sets $S$ with $|S \cap T| \neq 0$ to cancel out. Fix a set $S$ in $\mathcal{T}_s$ for  
some $s > 0$, where $|S| = k-1$ and $|S \cap T| = s$. The coefficient $(a_1 \cdots a_n)/{a_S}$ appears in $g$ with coefficient $b_i$ for any monomial $x_{T'}$ arising from a set $T'$ with $|T'| = p-1$, $T' \subseteq T$ (so that $x_{T'}$ divides $x_T$), and $|S \cap T'| = i$. The number of such sets $T'$ is given by
\[
\binom{s}{i} \binom{p+1-s}{p-1-i},
\]
where the first binomial coefficient counts the possible intersections of $S$ and $T'$, while the second counts the possible remaining elements of $T'$. Hence, for $g(x_1 + \cdots + x_n)^2$ to be a constant multiple of $\ell^p \ell'$, we must solve the system of equations
\begin{equation}\label{eqn:syz_system}
\sum_{i=0}^{p-1}\binom{s}{i}\binom{p+1-s}{p-1-i}b_i=0
\end{equation}
for $s = 1, 2, \dots, p-1$. This is a triangular system, as the second binomial coefficient is non-zero only for $i = s-2, s-1, s$, so $b_i$ has non-zero coefficients only for these values of $i$. Therefore, the system has a solution. As a result, there exists a constant $c$ such that for $g' = c g$, we have $\ell^p \ell' = g'(x_1 + \cdots + x_n)^2$, as desired.
\end{proof}

\begin{remark}
One can determine the polynomial $g$ in Proposition~\ref{prop:syzygy_relation} explicitly. By imposing the condition that $\binom{p+1}{p-1} b_0 = 1$, the system~\eqref{eqn:syz_system} becomes a special case of the system encountered in \eqref{eq:InhomogLinSys_3}, whose solution is provided in Lemma~\ref{lem:BinomialCoeffsAlternatingSums}.
\end{remark}

\begin{example}
Let $n = 5$ and $p = 2$. Proposition~\ref{prop:syzygy_relation} then concerns the forms
\begin{align*}
\ell &= a_1 x_1 + a_2 x_2 + a_3 x_3 + a_4 x_4 + a_5 x_5, \\
\ell' &= (2a_1 - a_2 - a_3 - a_4 - a_5) a_1 x_1 + \cdots + (2a_5 - a_1 - a_2 - a_3 - a_4) a_5 x_5, \\
g &= \sum_{j=1}^{5} \big(b_1 \frac{a_1 a_2 a_3 a_4 a_5}{a_j} + b_0 \sum_{\substack{p=1 \\ p \neq j}}^{5} \frac{a_1 a_2 a_3 a_4 a_5}{a_p}\big) x_j,
\end{align*}
where $2b_0 + b_1 = 0$. If we choose $b_0 = -1$ and $b_1 = 2$, a simple calculation shows
\begin{align*}
\ell^2 \ell' &= g (x_1 + x_2 + x_3 + x_4 + x_5)^2 \\
&= -6 \left(a_1 a_2 a_3 x_1 x_2 x_3 (a_4 + a_5) + \cdots + a_3 a_4 a_5 x_3 x_4 x_5 (a_1 + a_2)\right)
\end{align*}
in the algebra $\mathbf{k}[x_1, \ldots, x_5] / (x_1^2, \ldots, x_5^2)$. This relation for $n = 5$ and $p = 2$ was the one that was computed symbolically by Fröberg and Hollman as mentioned earlier.
\end{example}

Proposition~\ref{prop:syzygy_relation} will be used to establish the failure of the WLP for even $n$ in Theorem~\ref{thm:WLP_linear_forms}. For the odd $n$, we need the following proposition.

\begin{proposition}\label{prop:syz_relation_even}
Let $n=2p$ for some $p\geq 3$ and let $\ell=a_1x_1 + \cdots + a_nx_n$ be a general linear form. Then there is a degree $2$ form $f$ and a degree $p-2$ form $g$ such that
\[
\ell^{p-2}f = (x_1+\cdots + x_n)^2g
\]
in $\kb[x_1,\dots, x_n]/(x_1^2,\dots, x_n^2)$ and $f$ is not a multiple of $(x_1+\cdots + x_n)^2$.
\end{proposition}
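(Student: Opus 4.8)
The plan is to reformulate the claim and then reduce it to Proposition~\ref{prop:syzygy_relation}. Write $B=\mathbf{k}[x_1,\dots,x_n]/(x_1^2,\dots,x_n^2)$ and $\ell_0=x_1+\cdots+x_n$. The assertion is equivalent to saying that the subspace $\ell_0^2 B_{p-2}\cap\ell^{p-2}B_2$ of $B_p$ is strictly larger than the line spanned by the trivial element $\ell_0^2\ell^{p-2}$ — equivalently, that multiplication by $\ell^{p-2}$ from degree $2$ to the top degree $p$ of $A:=\mathbf{k}[x_1,\dots,x_n]/(x_1^2,\dots,x_n^2,\ell_0^2)$ fails to be injective (injectivity of multiplication by $\ell^{p-2}$ on $B_2$ makes the two formulations match). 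So it is enough to produce, for general $\ell$, a quadric $f\notin(\ell_0^2)$ and a form $g$ of degree $p-2$ with $\ell^{p-2}f=\ell_0^2 g$ in $B$.

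I would split off the last variable, writing $\ell=\ell_1+a_n x_n$ with $\ell_1$ a general linear form in $x_1,\dots,x_{n-1}$. Since $x_n^2=0$ in $B$, the binomial theorem gives there $\ell_1^{p-1}=\ell^{p-1}-(p-1)a_n x_n\ell^{p-2}$, hence $\ell_1^{p-1}\ell'=\ell^{p-2}\bigl(\ell_1\ell'-(p-2)a_n x_n\ell'\bigr)$ for any linear form $\ell'$; moreover $(x_1+\cdots+x_{n-1})^2=\ell_0^2-2\ell_0 x_n$ and $\ell_0 x_n=(x_1+\cdots+x_{n-1})x_n$ in $B$. Now apply Proposition~\ref{prop:syzygy_relation} in the $n-1=2p-1$ variables $x_1,\dots,x_{n-1}$ (its parameter being $p-1$): it produces a linear form $\ell'$ and a form $\tilde g$ of degree $p-2$, both in $x_1,\dots,x_{n-1}$, with $\ell_1^{p-1}\ell'=(x_1+\cdots+x_{n-1})^2\,\tilde g$. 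Combining these identities, one rearranges to
\[
\ell_0^2\,\tilde g \;=\; \ell^{p-2}\bigl(\ell_1\ell'-(p-2)a_n x_n\ell'\bigr)\;+\;2(x_1+\cdots+x_{n-1})\,x_n\,\tilde g \quad\text{in }B .
\]
The crux, and the step I expect to be the main obstacle, is to show that the correction term $2(x_1+\cdots+x_{n-1})x_n\tilde g$ again lies in $\ell^{p-2}B_2$, say $=\ell^{p-2}h$ for a quadric $h$; then $f:=\ell_1\ell'-(p-2)a_n x_n\ell'+h$ and $g:=\tilde g$ do the job. For $p=3$ this is immediate. For $p\ge 4$ I would use the explicit description of $\tilde g$ from Theorem~\ref{thm:f_description} and the Remark following Proposition~\ref{prop:syzygy_relation} (its coefficients are built from the $a_i$ exactly as the Gröbner basis elements are), together with the isomorphism $B/(\ell)\cong\mathbf{k}[x_1,\dots,x_{n-1}]/(x_1^2,\dots,x_{n-1}^2,\ell_1^2)$, which turns divisibility by $\ell$ in $B$ into vanishing in that algebra, and an induction on $p$ if needed. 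An alternative is to bypass Proposition~\ref{prop:syzygy_relation} and repeat its method: make the ansatz $g=\sum_{|U|=p-2}d_U x_U$ with the $d_U$ expressed via auxiliary scalars $b_0,\dots,b_{p-2}$ as in Theorem~\ref{thm:f_description}, expand $\ell^{p-2}f=\ell_0^2 g$ in $B$, and check that the identity collapses to a triangular linear system in the $b_i$ with constant coefficients, solvable by the finite-difference argument of Lemma~\ref{lem:BinomialCoeffsAlternatingSums}.

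Finally I would verify $f\notin(\ell_0^2)$. Setting $x_n=0$ in $\ell^{p-2}h=2(x_1+\cdots+x_{n-1})x_n\tilde g$ gives $\ell_1^{p-2}\cdot h|_{x_n=0}=0$ in $\mathbf{k}[x_1,\dots,x_{n-1}]/(x_1^2,\dots,x_{n-1}^2)$, and since multiplication by $\ell_1^{p-2}$ is injective there in degree $2$ (as $p\ge 3$), $h$ is divisible by $x_n$; hence $f|_{x_n=0}=\ell_1\ell'$. Because $\ell_1$ and $\ell'$ are general and non-proportional, $\ell_1\ell'$ is not a scalar multiple of $(x_1+\cdots+x_{n-1})^2$ in $\mathbf{k}[x_1,\dots,x_{n-1}]/(x_1^2,\dots,x_{n-1}^2)$, so $f$ is not a scalar multiple of $\ell_0^2$. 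This is precisely the non-injectivity statement that feeds, via Lemma~\ref{lem:full_rank_switch}, into the proof of Theorem~\ref{thm:WLP_linear_forms}; the hypothesis $p\ge 3$ is essential, since for $p=2$ the form $g$ would be constant and $f$ forced to be a multiple of $\ell_0^2$.
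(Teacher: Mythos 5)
Your route---split off $x_n$, use $x_n^2=0$ to relate $\ell^{p-2}$ and $\ell_0^2$ to their $(n-1)$-variable counterparts, and invoke Proposition~\ref{prop:syzygy_relation} in $n-1=2(p-1)+1$ variables---is a genuinely different strategy from the paper's, but it has a gap at exactly the step you flag as the crux, and this gap cannot be closed as stated. For the reduction to produce $\ell_0^2\tilde g=\ell^{p-2}f$ you need $2\ell_0'x_n\tilde g\in\ell^{p-2}B_2$, where $\ell_0'=x_1+\cdots+x_{n-1}$. Writing a would-be preimage as $h=h_0+x_nh_1$ with $h_0,h_1\in\kb[x_1,\dots,x_{n-1}]$ and comparing the parts of $\ell^{p-2}h$ that are free of $x_n$ and divisible by $x_n$, one gets $\ell_1^{p-2}h_0=0$ and $\ell_1^{p-2}h_1+(p-2)a_n\ell_1^{p-3}h_0=2\ell_0'\tilde g$ in $B'=\kb[x_1,\dots,x_{n-1}]/(x_1^2,\dots,x_{n-1}^2)$. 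Since $\cdot\ell_1^{p-2}\colon B'_2\to B'_p$ is injective for $p\geq3$, the first forces $h_0=0$ and the second becomes $\ell_1^{p-2}h_1=2\ell_0'\tilde g$; so you would need $\ell_0'\tilde g$ to lie in the $(n-1)$-dimensional subspace $\ell_1^{p-2}B'_1\subseteq B'_{p-1}$, which nothing you have cited forces. Your claim that this is ``immediate'' for $p=3$ is in fact false: with $n=6$ and $a_i=i$ for $i\leq 5$, the example following Proposition~\ref{prop:syzygy_relation} gives $\tilde g$ proportional to $43x_1-47x_2-77x_3-92x_4-101x_5$, and writing $\ell_0'\tilde g=\ell_1\sum_i d_ix_i$ forces $a_id_j+a_jd_i=c_i+c_j$ for all $i<j$; the $(2,3)$ and $(2,4)$ equations then give $d_1=11/3$ and $d_1=25/16$, a contradiction. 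So the correction term is not in $\ell B_2$, and the reduction does not go through.

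The ``alternative'' you sketch at the end is essentially what the paper does. It posits
\[
f=\sum_{\substack{T\subseteq[n]\\|T|=2}}a_Tx_T\Big(\sum_{i=0}^{2}\lambda_i\!\!\sum_{S\in\mathcal{T}_i}\!a_S\Big),
\qquad
g=\sum_{\substack{T\subseteq[n]\\|T|=p-2}}x_T\Big(\sum_{i=0}^{p-2}b_i\!\!\sum_{S\in\mathcal{T}_i}\!\frac{a_1\cdots a_n}{a_S}\Big),
\]
shows by the intersection-counting argument of Proposition~\ref{prop:syzygy_relation} that matching coefficients in $\ell^{p-2}f=\ell_0^2g$ collapses to the two triangular systems~\eqref{coef:lambda} and~\eqref{coeff:b_i}, and reads off $f\notin(\ell_0^2)$ from the fact that $f$ has distinct coefficients while $\ell_0^2$ does not. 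You would need to actually carry this direct computation out rather than gesture at it; as written, the proof is incomplete.
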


\begin{proof}
The proof follows similarly to that of Proposition~\ref{prop:syzygy_relation}.
We claim that 
\[f= \sum_{\substack{T\subseteq [n]\\ |T|=2}}a_Tx_T\ \big( \sum_{i=0}^{2}\lambda_i \sum_{\substack{S\in\mathcal{T}_i}}a_S \big)\quad\text{and}\quad g = \sum_{\substack{T \subseteq [n] \\ |T| = p-2}} x_T\ \big( \sum_{i=0}^{p-2} b_i \sum_{\substack{S \in \mathcal{T}_{p-2}}} \frac{a_1 \cdots a_n}{a_S} \big),\] 
where the coefficients $\lambda_i$ and $b_i$ are determined as solutions to the systems~\eqref{coef:lambda} and \eqref{coeff:b_i}, respectively. 
We first show that $\ell^{p-2}f$ can be written as
\begin{equation}\label{eq:lambda_syz}
\ell^{p-2}f = c\sum_{\substack{T\subseteq [n]\\ |T|=p}}a_Tx_T\ \big(\! \sum_{\substack{S\in\mathcal{T}_0}}a_S \big) \\
= c\sum_{\substack{T\subseteq [n]\\ |T|=p}}x_T\ \big( \!\!\!\!\sum_{\substack{S\subseteq [n]\\ |S\cap T|=0\\ |S|=p-2}}\frac{a_1\cdots a_{n}}{a_S} \big)
\end{equation}
for some constant $c$.
The above equalities hold because, in $\ell^{p-2}f$, the coefficient in front of $x_T$ for a set $T$ with $|T| = p$ is given by $a_T$, multiplied by sums of terms of the form $a_S$ for sets $S$ of size 2. We want to show that all constants corresponding to such sets with $|S \cap T| \neq 0$ cancel out. Consider a set $S$ with $|S| = 2$ and $|S \cap T| = s > 0$. Then $a_S$ appears in $f$ with coefficient $\lambda_i$ for any monomial $x_{T'}$, where $T'$ is a set with $|T'| = 2$, $T' \subseteq T$, and $|S \cap T'| = i$. Since there are 
$\binom{s}{i} \binom{p-s}{2-i}$
such sets $T'$, the system of equations that needs to be solved is
\begin{equation}\label{coef:lambda}
\begin{cases}
\lambda_2 + \binom{2}{1}\binom{p-2}{1}\lambda_1 + \binom{p-2}{2}\lambda_0 &= 0, \\
\binom{p-1}{1} \lambda_1 + \binom{p-1}{2} \lambda_0 &= 0.
\end{cases}
\end{equation}
Since this system is triangular, it has a solution, and we can find $\lambda_0$, $\lambda_1$, and $\lambda_2$ such that Equation \eqref{eq:lambda_syz} is satisfied.

\medskip
We next want $g(x_1 + \cdots + x_n)^2$ to be given by the expression in equation \eqref{eq:lambda_syz}. By the same arguments as in Proposition~\ref{prop:syzygy_relation}, and by considering how many times sets with specified intersection sizes occur, $g(x_1 + \cdots + x_n)^2$ will be in the desired form if the system given by
\begin{equation}\label{coeff:b_i}
    \sum_{i=0}^{p-2} \binom{s}{i} \binom{p-s}{p-i-2} b_i = 0
\end{equation}
for $s = 1, 2, \dots, p-2$, has a solution. This system is triangular since only the $b_i$ with $i = s$, $i = s-1$, and $i = s-2$ have non-zero coefficients. Hence, the system has a solution, and our claimed form of $g$ (or possibly $cg$ for some constant $c$) satisfies that $\ell^{p-2} f = (x_1 + \cdots + x_n)^2 g$. Finally, it is clear that $f$ is not a multiple of $(x_1 + \cdots + x_n)^2$ in $k[x_1, \dots, x_n]/(x_1^2, \dots, x_n^2)$ since $f$ has different coefficients in front of its non-zero terms, while $(x_1 + \cdots + x_n)^2$ does not.
\end{proof}

While Propositions~\ref{prop:syzygy_relation} and~\ref{prop:syz_relation_even} will be used to establish failure of the WLP, the next lemma will be used to lift this failure to more degrees and address the nontrivial cases for which the WLP holds.

\begin{lemma}\label{lem:Injective_WLP}
Let $B=\mathbf{k}[x_1,\ldots,x_n]/(x_1^{2},\ldots,x_{n}^{2}, (x_1+\cdots+x_n)^{2})$ and let $\ell$ be a general linear form in $B$. If $n\geq 2p+2$, then $\cdot \ell^p : B_1 \to B_{p+1}$ is injective.
\end{lemma}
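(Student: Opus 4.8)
The plan is to translate injectivity of $\cdot\ell^p\colon B_1\to B_{p+1}$ into a dimension count and then derive the needed inequality from the Hilbert series of $B$, which we already control. Concretely, $B_1$ has dimension $n$, so it suffices to show that the image of multiplication by $\ell^p$ has dimension $n$, i.e.\ that $\cdot\ell^p$ has full rank $n$. Equivalently, writing $A = B/(\ell^p)$, it suffices to show
\[
\dim_{\mathbf k} A_{p+1} = \dim_{\mathbf k} B_{p+1} - \dim_{\mathbf k} B_1 = \dim_{\mathbf k} B_{p+1} - n.
\]
Since $B = \mathbf{k}[x_1,\dots,x_n]/I_{n,2}$, Corollary~\ref{cor:Hilbert_series} gives $\mathrm{HS}(B;t) = [(1+t)^n(1-t^2)]$, whose degree-$d$ coefficient is $\binom{n}{d}-\binom{n}{d-2}$ as long as $2d-2<n$. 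So the first thing to check is that under the hypothesis $n\ge 2p+2$ we are still in the range where the Hilbert function of $B$ in degrees $1$ and $p+1$ is given by this binomial formula, i.e.\ $2(p+1)-2 = 2p < n$ and $2\cdot 1 - 2 = 0 < n$; both hold. Thus $\dim B_1 = n$ and $\dim B_{p+1} = \binom{n}{p+1}-\binom{n}{p-1}$.

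Next I would invoke the lower bound on the Hilbert series of $A$: since $A$ is a quotient of the polynomial ring by $n$ quadrics, one quadratic $(x_1+\cdots+x_n)^2$, and the degree-$p$ form $\ell^p$, Fröberg's inequality (used already in the proof of Corollary~\ref{cor:Hilbert_series}) gives
\[
\dim_{\mathbf k} A_{p+1} \;\ge\; \Bigl[(1+t)^n(1-t^2)(1-t^p)\Bigr]_{p+1},
\]
where $[\cdot]_{p+1}$ denotes the coefficient of $t^{p+1}$ after truncation at the first non-positive value. The key computation is then to check that this truncation has not yet kicked in by degree $p+1$ — equivalently that all coefficients of $(1+t)^n(1-t^2)(1-t^p)$ through degree $p+1$ are positive — and that the degree-$(p+1)$ coefficient equals exactly $\binom{n}{p+1}-\binom{n}{p-1}-n$. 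Expanding, the coefficient of $t^{p+1}$ in $(1+t)^n(1-t^2)(1-t^p)$ is
\[
\Bigl(\binom{n}{p+1}-\binom{n}{p-1}\Bigr) - \Bigl(\binom{n}{1}-\binom{n}{-1}\Bigr) = \binom{n}{p+1}-\binom{n}{p-1}-n,
\]
which is precisely $\dim B_{p+1}-n$. Combining the Fröberg lower bound with the obvious upper bound $\dim A_{p+1}\le \dim B_{p+1}-\dim(\ell^p B_0\cap\text{stuff})$... more cleanly: $\dim A_{p+1} \le \dim B_{p+1} - \mathrm{rank}(\cdot\ell^p) \le \dim B_{p+1}$, and full rank is equivalent to $\dim A_{p+1} = \dim B_{p+1}-n$ together with $\mathrm{rank}\le n = \dim B_1$. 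So $\dim A_{p+1}\ge \dim B_{p+1}-n$ from Fröberg forces $\mathrm{rank}(\cdot\ell^p)\le n$ to be an equality, i.e.\ $\cdot\ell^p$ is injective on $B_1$.

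The main obstacle is the positivity check for the truncation bracket: one must verify that for $n\ge 2p+2$ the polynomial $(1+t)^n(1-t^2)(1-t^p)$ has strictly positive coefficients in all degrees $0,1,\dots,p+1$, so that the bracketed Hilbert series genuinely has the binomial value in degree $p+1$ rather than a truncated $0$. This amounts to showing $\binom{n}{d}-\binom{n}{d-2}-\binom{n}{d-p}+\binom{n}{d-p-2}>0$ for $d\le p+1$; for $d\le p-1$ the last two terms vanish and positivity is the standard unimodality/log-concavity fact that $\binom{n}{d}>\binom{n}{d-2}$ when $2d-2<n$, while for $d\in\{p,p+1\}$ one checks the four-term expression directly using $n\ge 2p+2$. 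An alternative, and perhaps cleaner, route avoiding Fröberg is to use the concrete relation from Proposition~\ref{prop:syzygy_relation} or~\ref{prop:syz_relation_even} only for the WLP-failure direction and, for this injectivity lemma, argue instead that a generic linear form $\ell$ on $B$ — equivalently a generic point configuration — cannot have $\ell^p$ annihilating a nonzero linear form, by a specialization/semicontinuity argument reducing to a single explicit choice of $\ell$ (e.g.\ exploiting the monomial-complete-intersection structure after the change of coordinates of Remark~\ref{rem:equiv}); but the Hilbert-series argument above is the most economical given the tools already developed.
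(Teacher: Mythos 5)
Your Hilbert-series argument has a gap that I don't think can be repaired along the lines you propose. The issue is the \emph{direction} of Fröberg's inequality. Writing $A = B/(\ell^p)$ and $r = \mathrm{rank}(\cdot\ell^p\colon B_1\to B_{p+1})$, one has the exact equality $\dim A_{p+1} = \dim B_{p+1} - r$. Injectivity means $r = n$, i.e.\ $\dim A_{p+1} \le \dim B_{p+1} - n$, so you need an \emph{upper} bound on $\dim A_{p+1}$. Fröberg's inequality (the result cited in the proof of Corollary~\ref{cor:Hilbert_series}) provides a coefficient-wise \emph{lower} bound on the Hilbert series of a quotient by forms of prescribed degrees; here it yields $\dim A_{p+1} \ge \dim B_{p+1} - n$, i.e.\ $r \le n$, which is already trivially true because $\dim B_1 = n$. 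The sentence ``$\dim A_{p+1}\ge \dim B_{p+1}-n$ from Fröberg forces $\mathrm{rank}(\cdot\ell^p)\le n$ to be an equality'' does not follow: an inequality $r \le n$ is not an equality. Worse, establishing the matching upper bound $\dim A_{p+1} \le \dim B_{p+1} - n$ is \emph{equivalent} to the maximal-rank statement we are trying to prove (this is exactly the content of Remark~\ref{rem:equiv}(ii) and Lemma~\ref{lem:full_rank_switch}), so there is no way to deduce it from general positions of the Fröberg series — the argument would be circular.

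The paper's proof takes a genuinely different route. Using the Conca/Wiebe deformation result, it reduces to checking that $\cdot\ell^p\colon A_1\to A_{p+1}$ has full rank on the \emph{monomial} algebra $A = \mathbf{k}[x_1,\ldots,x_n]/\ini(I_{n,2})$, and since $A$ is monomial, to the single choice $\ell = x_1 + \cdots + x_n$ (citing Migliore--Mir\'o-Roig--Nagel and Nicklasson). Then it expands $(x_1+\cdots+x_n)^p(\sum c_i x_i)$ over the squarefree monomials not in $\ini(I_{n,2})$, chooses a handful of specific monomials that survive (using the explicit lattice-path description of $\ini(I_{n,2})$ from Theorem~\ref{thm:GBasis} and the hypothesis $n \ge 2p+2$), and reads off a system of linear equations on the $c_i$ that forces $c_1 = \cdots = c_n = 0$. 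Your closing remark — that one could use a specialization argument reducing to a single explicit $\ell$ — is in the right spirit, but it is not carried out, and the explicit coefficient-chasing on the initial ideal is exactly where the real work lies; the Hilbert-series reformulation by itself cannot close the loop.
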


\begin{proof}
We begin by noting that, by an argument of Conca \cite[Theorem 1.1]{Conca}, which was first pointed out by Wiebe \cite[Proposition 2.9]{Wiebe} to hold in this generality, it is sufficient to demonstrate that the map $\cdot \ell^p : A_1 \to A_{p+1}$ has full rank, where $A = \mathbf{k}[x_1, \ldots, x_n]/\ini(I_{n,2})$. Next, since $A$ is a monomial algebra, it suffices to consider the linear form $\ell = x_1 + \dots + x_n$, as established for the WLP by Migliore, Mir\'o-Roig, and Nagel in \cite[Proposition 2.2]{Miglioretrans} and generalized to the SLP and arbitrary characteristics by Nicklasson \cite[Theorem 2.2] {Nicklasson_SLP_two_variables}. We will prove that the map is injective by showing that if
\[
(x_1+\cdots + x_{n})^p \big(\sum_{i=1}^{n}c_ix_i \big) = 0
\]
in $A$, then all $c_i=0$. To this end, we expand and observe that
\[
(x_1+\cdots + x_{n})^p \big(\sum_{i=1}^{n}c_ix_i \big) = p!\sum_{\substack{S\subseteq [n]\\ |S|=p+1}}\big(\sum_{j\in S}c_j\big)x_S.
\]
By our restriction on $n$ and $p$, it follows from our description of $\ini(I_{n,2})$ in Theorem~\ref{thm:GBasis} that all monomials of the form $x_ix_{n-p+1}x_{n-p+2}\cdots x_{n}$ for $i=1,2,\dots, n-p$ are nonzero in $A_{p+1}$. Therefore, we have 
\[
c_i + \sum_{j=n-p+1}^{n} c_j = 0,
\]
which implies that $c_1 = c_2 = \cdots = c_{n-p}$. Additionally, the monomials
\[
x_{n-p-1} x_{n-p} \cdots x_{n-p+i-2} x_{n-p+i} \cdots x_n
\]
are nonzero in $A_{p+1}$ for $i = 1, 2, \dots, p+1$. Hence, we obtain
\[
\sum_{\substack{j=n-p-1 \\ j \neq n-p+i-1}}^{n} c_j = 0,
\]
which, combined with the relation $c_{n-p} + \cdots + c_n = 0$, implies that $c_{n-p-1} = c_{n-p} = \cdots = c_n$. Therefore, all the $c_j$'s must be equal, and since $c_{n-p} + \cdots + c_n = 0$, it follows that $c_j = 0$ for all $j$. This establishes the injectivity of the map.
\end{proof}

Similarly to how Propositions~\ref{prop:syzygy_relation} and~\ref{prop:syz_relation_even} cover both even and odd cases, an alternative version of Lemma~\ref{lem:Injective_WLP} is also necessary.
\begin{lemma}\label{lem:Lift_kernel_deg2}
Consider the algebra $B = \mathbf{k}[x_1, \dots, x_n] / (x_1^2, \dots, x_n^2, (x_1 + \cdots + x_n)^2)$, where $n = 2p$ and $p \geq 3$. Then, for $q$ such that $2 < q < p$ and $\ell$ a general linear form, the map $\cdot \ell^{q-2}: B_2 \to B_q$ is injective.
\end{lemma}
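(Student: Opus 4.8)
The plan is to follow the same route as in Lemma~\ref{lem:Injective_WLP}. First I would pass to the monomial algebra $A=\mathbf{k}[x_1,\dots,x_n]/\ini(I_{n,2})$: by the Conca--Wiebe semicontinuity argument (Conca~\cite[Theorem~1.1]{Conca}, Wiebe~\cite[Proposition~2.9]{Wiebe}) it suffices to show that $\cdot\,\ell^{q-2}\colon A_2\to A_q$ has full rank, and since $A$ is a monomial algebra one may take $\ell=x_1+\cdots+x_n$ (Migliore--Mir\'o-Roig--Nagel~\cite[Proposition~2.2]{Miglioretrans}, Nicklasson~\cite[Theorem~2.2]{Nicklasson_SLP_two_variables}); as the relevant Hilbert function values satisfy $\dim A_2\le\dim A_q$ this is the same as injectivity. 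Recall from Theorem~\ref{thm:GBasis} and Lemma~\ref{lem:CountingArgument} that a squarefree monomial $x_S$ with $S=\{s_1<\cdots<s_d\}$ is nonzero in $A$ exactly when $s_i\ge 2i-1$ for all $i$ (its $(N,E)$-path stays strictly below $y=x+2$); in degree $2$ the only vanishing monomial is $x_1x_2$. Expanding
\[
\ell^{q-2}\Bigl(\sum_{T}c_Tx_T\Bigr)=(q-2)!\sum_{|S|=q}\Bigl(\sum_{T\in\binom{S}{2}}c_T\Bigr)x_S ,
\]
where the outer sum ranges over $2$-subsets $T\neq\{1,2\}$, one sees that injectivity is equivalent to the combinatorial claim: if $\Sigma(S):=\sum_{T\in\binom{S}{2}}c_T=0$ for every \emph{admissible} $q$-set $S$ (meaning $s_i\ge 2i-1$ for all $i$; note that an admissible $S$ can never contain $\{1,2\}$), then all $c_T=0$.

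To prove the claim I would first extract an additive form $c_{\{a,b\}}=e_a+e_b$ for suitable scalars $e_1,\dots,e_n$. The hypotheses $n=2p$ and $2<q<p$ force $p\ge4$ and $n-q=2p-q\ge p+1\ge5$, so there is an ample supply of admissible $q$-sets of the shape $\{a,b\}\cup C$ with $C$ a fixed block of $q-2$ large indices. For four such sets sharing the same core $C$, the second-difference combination
\[
\Sigma(\{a,b\}\cup C)-\Sigma(\{a,b'\}\cup C)-\Sigma(\{a',b\}\cup C)+\Sigma(\{a',b'\}\cup C)
\]
cancels all terms involving $C$ and equals $c_{\{a,b\}}-c_{\{a,b'\}}-c_{\{a',b\}}+c_{\{a',b'\}}=0$; since all mixed second differences vanish on a large range of indices, the coefficients are additively separable there. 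Repeating with cores that omit one or two of the large indices and include small indices in their place propagates the identity $c_{\{a,b\}}=e_a+e_b$ to all pairs $\{a,b\}\neq\{1,2\}$, the only pair one must route around throughout.

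Substituting $c_{\{a,b\}}=e_a+e_b$ into the relation for a single admissible $q$-set $S$ gives $\Sigma(S)=(q-1)\sum_{a\in S}e_a=0$, hence $\sum_{a\in S}e_a=0$ for every admissible $S$. Comparing two admissible sets that differ in one element shows $e_a=e_{a'}$ whenever both indices lie in a suitable range, and varying the common part of the two sets (again using that $n-q$ is large) forces all the $e_a$ to equal a common value $e$; then $qe=0$, so $e=0$ and all $c_T=0$, as desired.

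The first paragraph is routine, being identical in form to the reduction in Lemma~\ref{lem:Injective_WLP}. I expect the main obstacle to be the combinatorial bookkeeping in the middle paragraph: one must choose the ``cores'' of large indices so that every auxiliary $q$-set actually satisfies $s_i\ge 2i-1$ (this is precisely where $2q<n$ is used), and one must carry the additive identity $c_{\{a,b\}}=e_a+e_b$ past the single excluded pair $\{1,2\}$ and out to the large indices without circular reasoning. It is worth noting at the outset that for every integer $q$ with $2<q<p$ one has $p\ge4$ and $n-q\ge5$, so all the index ranges invoked in the argument are nonempty.
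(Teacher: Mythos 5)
Your reduction in the first paragraph (pass to $A=\mathbf{k}[x_1,\dots,x_n]/\ini(I_{n,2})$ via Conca--Wiebe and take $\ell=x_1+\cdots+x_n$) is exactly the paper's, but from that point on your strategy is genuinely different. The paper first kills $c_{i,j}$ for $i,j\geq q$ by invoking the SLP of the monomial complete intersection $\mathbf{k}[x_q,\dots,x_n]/(x_q^2,\dots,x_n^2)$ (where $q\leq p-1$ gives the needed $\dim B_q'\geq\dim B_2'$), and then runs a downward induction on the smallest index $r$: assuming $c_{i,j}=0$ for $i,j\geq r$, it extracts monomial conditions showing $c_{r-1,j}=0$ for $j\geq r$. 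You instead aim to extract additive separability $c_{\{a,b\}}=e_a+e_b$ from mixed second differences, which avoids invoking SLP of a sub-algebra and, if completed, finishes cleanly via $\sum_{a\in S}e_a=0$, $e_a$ constant, $qe=0$.

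However, the proposal has a genuine gap at exactly the point you flag. The "propagation" sentence --- "Repeating with cores that omit one or two of the large indices ... propagates the identity $c_{\{a,b\}}=e_a+e_b$ to all pairs" --- is the entire content of the argument and is not carried out. What must be verified is that for \emph{every} quadruple of distinct indices $(a,a',b,b')$ (with all four cross-pairs $\neq\{1,2\}$) one can choose a $(q-2)$-set $C$ disjoint from $\{a,a',b,b'\}$ such that the dominance-smallest of the four $q$-sets satisfies $s_i\geq 2i-1$, and that the accessible second-difference constraints generate all quadruple relations needed for global separability. This requires real bookkeeping: for instance, if $\{a,b\}$ contains very small indices then $C$ must avoid low indices (e.g.\ $2\notin C$ when $\{a,b\}=\{1,3\}$), while when both $a',b'$ are among the top few indices $C$ must be pushed down accordingly; and for pairs of large indices the pair cannot be the two smallest sorted entries, so the quadruple must be chosen with care. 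You correctly identify $2q<n$ as the numerical margin that should make this possible, but you do not show that the "accessible-quadruple graph" is connected enough to force additivity, which is a combinatorial connectivity argument of comparable weight to the paper's downward induction. As written, the proposal is a plausible alternative route, but it stops short of a proof at precisely the step you name as the main obstacle.
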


\begin{proof}
As in Lemma~\ref{lem:Injective_WLP}, it suffices to show that the map $\cdot \ell^{q-2}:A_2 \to A_{q}$ is injective, where $A=\mathbf{k}[x_1,\dots, x_n]/\ini(I_{n,2})$ and $\ell=x_1+\cdots + x_n$. Let 
\[
g=\sum_{i<j}c_{i,j}x_ix_j
\]
be a quadratic form. Then
\begin{equation}\label{eq:deg2_prod}
g(x_1+\cdots + x_n)^{q-2} = (q-2)!\sum_{|T|=q}\big(\!\!\!\sum_{\{i,j\}\in T}c_{i,j} \big)x_T
\end{equation}
in $A$. We aim to show that if the expression in \eqref{eq:deg2_prod} equals zero, then all $c_{i,j} = 0$. By the bijection in Lemma~\ref{lem:LatticePathBijections}, it is clear that only the monomials corresponding to lattice paths that do not intersect the line $y = x + 2$ impose conditions on the coefficients. Focusing on the terms in \eqref{eq:deg2_prod} involving variables $x_i$ with $i \geq q$, the corresponding paths do not intersect the line and therefore provide constraints on the coefficients. These are precisely the equations that arise from multiplying by $(\ell')^{q-2}$ for $\ell' = x_q + \cdots + x_n$ in the algebra $B' = \mathbf{k}[x_k, \dots, x_n] / (x_q^2, \dots, x_n^2)$. Since $B'$ is a monomial complete intersection with the strong Lefschetz property, we know that $\cdot (\ell')^{q-2}: B_2' \to B_q'$ has full rank. Given that the socle degree of $B'$ is $n - (q-1) = 2p - q + 1$, we have $\dim_{\mathbf{k}} B_q' \geq \dim_{\mathbf{k}} B_2'$ because $2(p-q) + 1 \geq 2$. Thus, $\cdot (\ell')^{q-2}: B_2' \to B_q'$ is injective, and hence $c_{i,j} = 0$ for all $i, j \geq q$.

To prove that all $c_{i,j} = 0$ without restrictions on $i, j$, we will show that no smallest index $r$ exists for which $c_{i,j} = 0$ whenever $i, j \geq r$. Suppose, for contradiction, that $c_{i,j} = 0$ for all $i, j \geq r > 1$. We will demonstrate that this implies $c_{i,j} = 0$ for all $i, j \geq r - 1$, as well.
Considering all terms in \eqref{eq:deg2_prod} involving variables with index at least $r-1$, we obtain, by assumption, the expression
\[
\sum_{\substack{T \subset \{r, \dots, n\} \\ |T| = q-1}} \left(\sum_{j \in T} c_{r-1,j} \right) x_{r-1} x_T.
\]
Factoring out $x_{r-1}$, this expression resembles the one in Lemma~\ref{lem:Injective_WLP}. Similar to that case, examining monomials of the form $x_T = x_i x_{n-q+3} x_{n-q+4} \cdots x_n$ for $i = r, r+1, \dots, n-q+2$ yields
\[
c_{r-1, i} + \sum_{j=n-q+3}^n c_{r-1, j} = 0,
\]
which implies $c_{r-1, r} = \cdots = c_{r-1, n-q+2}$. Additionally, the monomials
\[
x_{n-q+1} x_{n-q+2} \cdots x_{n-q+j-1} x_{n-q+j+1} \cdots x_n
\]
for $j = 1, 2, \dots, q$ imply that $c_{r-1, n-q+1} = c_{r-1, n-q+2} = \cdots = c_{r-1, n}$. Thus, all $c_{r-1, j}$ are equal, and any one of these equations implies that $c_{r-1, j} = 0$ for all $j \geq r$, as desired.
Note that the case $r = 2$ is slightly different since $x_1 x_2 x_{n-q+3} \cdots x_n = 0$ in $A$, which means we obtain no information about $c_{1,2}$. However, as $x_1 x_2 = 0$ in $A$, this poses no issue. Thus, the proof is complete.
\end{proof}

With this preparation, we are now ready to prove Theorem~\ref{thm:WLP_linear_forms}.
\begin{proof}[{\bf Proof of Theorem~\ref{thm:WLP_linear_forms}}]
By Remark~\ref{rem:equiv}(i), it suffices to prove the statement for the algebra $\mathbf{k}[x_1, \ldots, x_n]/(x_1^{2}, \ldots, x_{n}^{2}, (x_1 + \cdots + x_n)^k)$. By Lemma~\ref{lem:full_rank_switch}, this is equivalent to showing that the $k$-th power of a general linear form has full rank on $\mathbf{k}[x_1, \ldots, x_{n-1}]/(x_1^{2}, \ldots, x_{n-1}^{2}, (x_1 + \cdots + x_{n-1})^2)$ for the same values of $n$ and $k$. We begin by examining the ideals for which we want to show the WLP.

Let $A = \mathbf{k}[x_1, \ldots, x_{n-1}]/(x_1^{2}, \ldots, x_{{n-1}}^{2}, (x_1 + \cdots + x_{n-1})^{2})$. The socle degree of $A$ is given by the smallest integer $d$ for which $\binom{n-1}{d} - \binom{n-1}{d-2} > 0$, which is $(n-1)/2$ if $n$ is odd, and $n/2$ if $n$ is even. Since any multiplication by a power of a linear form from $A_0$, or into a zero-dimensional vector space, has full rank, it follows that any $k$ greater than $(n-1)/2$ (for $n$ odd) or greater than $n/2$ (for $n$ even) works. Additionally, when $n = 2k + 3$, the socle degree is $k + 1$, so the only map potentially lacking full rank is from degree 1 to degree $k + 1$. However, this map is injective by Lemma~\ref{lem:Injective_WLP}, establishing all the required full-rank maps.

We are now left to show that the remaining values of $n,k$ give algebras that do not have the WLP. Assume first that $n=2p+2$ is even. Then we claim that for any general linear form $\ell$ in $A$, the map 
\[
\cdot \ell^k:A_{p+1-k} \to A_{p+1}
\]
does not have full rank. For $k = p$, Proposition~\ref{prop:syzygy_relation} provides a linear form $\ell' \in A_1$ such that $\ell^p \ell' = 0$ in $A$. Additionally, by Lemma~\ref{lem:Injective_WLP}, we have $\ell^k \ell' \neq 0$ for any $k < p$. Therefore, $\ell^{p-k} \ell'$ is a nonzero element in the kernel of $\ell^k$ for all $k = 2, \dots, p-1$, demonstrating that $\cdot \ell^k: A_{p+1-k} \to A_{p+1}$ cannot be injective. 
Moreover, this map cannot be surjective either. Using the theory of inverse systems, Boij and the second author in \cite[Theorem 5.2]{boijlundqvist} have shown that $A/(\ell^k)$ is always nonzero in degree $p + 1$ for any $k \geq 2$. This establishes all required failures of maximal rank for even $n$.

The case where $n = 2p + 1$ is odd follows a similar proof. First, the map 
\[
\cdot \ell^k: A_{p-k} \to A_p
\]
is not injective for $k = p - 2$ by Proposition~\ref{prop:syz_relation_even}, as there exists a nonzero form $f$ of degree 2 with $\ell^{p-2} f = 0$ in $A$. Furthermore, by Lemma~\ref{lem:Lift_kernel_deg2}, we have $\ell^{p-k-2} f \neq 0$ in $A$ for $k < p - 2$, which means that $\ell^{p-k-2} f$ is a nonzero element in the kernel of $\cdot \ell^k: A_{p-k} \to A_p$, demonstrating that this map is never injective.
Additionally, by 
\cite[Theorem 5.2]{boijlundqvist}, 
setting the last variable to zero shows that $A/(\ell^k)$ is nonzero in degree $p$ for any $k \geq 2$. This completes the verification of all required failures of full rank, proving the theorem.
\end{proof}

\subsection{A remark on the Fr\"oberg conjecture}
We have the isomorphism $$\mathbf{k}[x_1,\dots, x_n]/(\ell_1^2,\dots, \ell_n^2, \ell_{n+1}^k,\ell_{n+2}) \cong 
\mathbf{k}[x_1,\dots, x_{n-1}]/(\ell_1^2,\dots, \ell_n^2, \ell_{n+1}^k),$$
which implies, by Theorem~\ref{thm:WLP_linear_forms}, that the latter algebra is thin when $$
\begin{cases}
k\geq \frac{n-3}{2} & \text{ for } n \text{ odd,}\\
k\geq \frac{n}{2} & \text{ for } n \text{ even}.\\
\end{cases}
$$
From this, it follows that the Fr\"oberg conjecture is true for $a_1 = \cdots = a_{n+1} = 2$ and $a_{n+2} = k$, where
$$
\begin{cases}
k \geq \frac{n-2}{2} & \text{for } n \text{ even}, \\
k \geq \frac{n+1}{2} & \text{for } n \text{ odd}.
\end{cases}
$$
The statement is trivial when $n$ is odd, as in this case, $k$ is the socle degree or higher. However, for even $n$, the case $k = \frac{n-2}{2}$ offers a new result for the conjecture.

\section{A minimal free resolution of the initial ideal}
The initial ideal of $I_{n,k}$ belongs to the class of squarefree stable ideals plus powers, as studied in \cite{Murai2008}. The structure of the minimal free resolution of such ideals and formulas for computing their Betti numbers are well-established, often relying on subsets of the generators. In this section, we provide an explicit description of the Betti numbers of $I_{n,k}$ using the combinatorial properties of its generators. This allows us to directly extract the extremal Betti numbers, regularity, and projective dimension in terms of $n$ and $k$.

We briefly recall the notion of Betti numbers and refer to \cite{eisenbud2013commutative} for more details.
Consider the ideal $I$ in the polynomial ring $R=\kb[x_1, \ldots, x_n]$. Assume that
\[
F : 0 \to \cdots \to F_i \to F_{i-1} \to \cdots \to F_0 \to I \to 0
\]
is a minimal free resolution of $I$ (i.e., a  free resolution such that $\varphi_{i+1}(F_{i+1}) \subseteq m F_i$ for all $i \geq 0$) and $m=(x_1,\ldots,x_n)$. The $i$-th Betti number $\beta_i(I)$ of $I$ is defined as the rank of $F_i$. The $i$-th graded Betti number in degree $j$ 
denoted by $\beta_{i,j}(I)$, is the rank of the degree $j$ part of $F_i$.

Finding the Betti numbers of ideals is, in general, challenging. However, by semi-upper semicontinuity, we have the inequality $\beta_{i,j}(I) \leq \beta_{i,j}(\ini(I))$ for all $i,j \geq 0$. Thus, the Betti numbers of the initial ideal provide an upper bound for the Betti numbers of the ideal. Our goal in this section is to compute the Betti numbers of the initial ideal $\ini(I_{n,k})$, hence providing an upper bound for those of the ideal $I_{n,k}$.

\subsection{The squarefree part of 
$\ini(I_{n,k})$}
\begin{definition}
Let $I$ be a squarefree monomial ideal with the minimal generating set $G(I)$. Then $I$ is called \emph{strongly squarefree stable} if, for any monomial $ux_j \in G(I)$ and any index $i < j$ such that $x_i$ does not divide $u$, the monomial $ux_i$ is also in $I$.
\end{definition}

In the following, let $J_{n,k}$ denote the squarefree part of the initial ideal $\ini(I_{n,k})$. 

\begin{proposition}\label{prop:Strongly_stable} 
The ideal $J_{n,k}$ is strongly squarefree stable.
\end{proposition}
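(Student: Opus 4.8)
The plan is to work directly with the combinatorial description of the generators of $J_{n,k}$ coming from Theorem~\ref{thm:GBasis}. Recall that the minimal generators of $\ini(I_{n,k})$ are the squares $x_1^2,\dots,x_n^2$ together with the leading monomials $\ini(g_{A,n,k})=x_A$ for $A\in\mathcal{A}$, i.e.\ for $A\subseteq[n]$ minimal with respect to inclusion among sets satisfying $\max(A)=2|A|-k$. Since $J_{n,k}$ is the squarefree part of $\ini(I_{n,k})$, its minimal generating set is exactly $\{x_A : A\in\mathcal{A}\}$ (the squares contribute nothing squarefree, and no $x_A$ divides another by minimality of $\mathcal{A}$; also no $x_A$ is divisible by any $x_i^2$). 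So I need to check: for every $A\in\mathcal{A}$ with $x_A=u\,x_j$ (meaning $j\in A$, $u=x_{A\setminus\{j\}}$) and every $i<j$ with $i\notin A$, the monomial $u\,x_i=x_{(A\setminus\{j\})\cup\{i\}}$ lies in $J_{n,k}$, i.e.\ is divisible by some $x_B$ with $B\in\mathcal{A}$.

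The key step is the following: set $A'=(A\setminus\{j\})\cup\{i\}$, so $|A'|=|A|=:d$. I claim $A'$ contains a subset $B\in\mathcal{A}$, equivalently that the $(N,E)$-lattice path $\tau^{-1}(x_{A'})$ touches the line $y=x+k$; then Lemma~\ref{lem:LatticePathBijections}(3) produces $g_{B,n,k}\in I_{n,k}$ with $\ini(g_{B,n,k})=x_B$ dividing $x_{A'}$, and $B\in\mathcal{A}$ after passing to an inclusion-minimal such subset. To see the path touches the line, compare $A'$ with $A$: the path for $A$ reaches height difference exactly $k$ after step $\max(A)=2d-k$ (there are $d$ north and $d-k$ east steps among the first $2d-k$ steps, since $\max(A)=2d-k$ forces all of $A$ into $\{1,\dots,2d-k\}$). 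Replacing $j\in A$ by a smaller index $i\notin A$ only moves one north step to an \emph{earlier} position in the first $\max(A)$ steps (if $i\le 2d-k$) — this can only increase the running excess of north over east steps at every intermediate point up to $2d-k$, so the path still attains excess $\ge k$ somewhere in its first $2d-k$ steps, hence touches $y=x+k$. The one case to handle separately is when $i>2d-k$ — but since $i<j\le\max(A)=2d-k$, this cannot occur, so there is nothing extra to check.

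The main obstacle, and the point requiring care, is the bookkeeping around inclusion-minimality: $A'$ itself need not be \emph{minimal} in $\mathcal{A}$ (it need not even satisfy $\max(A')=2|A'|-k$), so I must be careful to phrase the conclusion as ``$x_{A'}$ is divisible by $\ini(g_{B,n,k})$ for some $B\in\mathcal{A}$'' rather than ``$A'\in\mathcal{A}$.'' This is exactly what the lattice-path argument via Lemma~\ref{lem:LatticePathBijections}(3) delivers: the minimal starting segment touching $y=x+k$ has length $2d'-k$ for some $d'\le d$, and truncating and padding with east steps yields the required $B$ with $x_B\mid x_{A'}$ and $B$ minimal, hence $B\in\mathcal{A}$. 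Once this divisibility is in place, $x_{A'}=u\,x_i\in\ini(I_{n,k})$, and being squarefree it lies in $J_{n,k}$, which is precisely the strong squarefree stability condition. I would close by remarking that this simultaneously re-derives that the $x_A$, $A\in\mathcal{A}$, are the minimal generators of $J_{n,k}$.
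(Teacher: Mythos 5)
Your proposal is correct and takes essentially the same lattice-path approach as the paper: both observe that moving the north step from position $j$ to the earlier position $i$ preserves the property of touching $y=x+k$ (hence divisibility by some $x_B$ with $B\in\mathcal{A}$, via Lemma~\ref{lem:LatticePathBijections}). You make the touching argument slightly more explicit than the paper, which simply splits into the two cases ``first touch at the last north step'' and ``earlier touch'' without spelling out why a touch must occur at all.
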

\begin{proof}
By Lemma~\ref{lem:LatticePathBijections}, the generators of $J_{n,k}$ correspond to $(N,E)$-lattice paths that touch the line $y = x + k$ for the first time after their last step. Let $P$ be a path corresponding to a generator $ux_j$. If $x_i$ does not divide $u$ and $i < j$, we need to show that $ux_i$ is also in $J_{n,k}$. 
In terms of the path, $ux_i$ corresponds to a new path $P'$ constructed from $P$ by changing the $j$-th step from going north to east, and the $i$-th step from going east to north. Since both $P$ and $P'$ take the same number of total steps and the same number of steps north, one of two cases must occur: 

(1) If the first time $P'$ touches the line $y = x + k$ is at the last step, then $P'$ corresponds to a monomial that is a minimal generator of $J_{n,k}$.

(2) If $P'$ touches the line $y = x + k$ at an earlier step, then the path up to that point corresponds to another generator $v$ of $J_{n,k}$ in an earlier degree. In this case, $ux_i$ is a multiple of $v$ and thus is in $J_{n,k}$, as desired.
\end{proof}

We now recall a formula for the Betti numbers of a strongly stable ideal from Gasharov, Hibi, and Peeva \cite{GasharovHibiPeeva2002}, as presented in \cite{Murai2008}. 
\begin{lemma}[\cite{GasharovHibiPeeva2002,Murai2008}]\label{lem:Murai}
   Let $G(I)$ be the set of minimal generators of a strongly squarefree stable ideal $I\subset R$. Then the Betti numbers $\beta_{p,p+s}(I)$
   are given by
\begin{equation}\label{eq:Gasharov_betti_formula}
\beta_{p,p+s}(I) =  
\sum_{\substack{u\in G(I)\\ \deg u = s}}\binom{\max u - s}{p},
\end{equation}
where $\max u$ is the largest index of a variable dividing the monomial $u\neq 1$.
\end{lemma}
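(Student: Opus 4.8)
The plan is to follow the classical route via \emph{linear quotients} and iterated mapping cones, which is how Gasharov--Hibi--Peeva and Murai obtain the formula. Write $G(I)=\{u_1,\dots,u_r\}$ and order the generators by non-decreasing degree, breaking ties so that $u$ precedes $v$ whenever $\max(u)<\max(v)$. The heart of the argument is to show that with respect to this order $I$ has linear quotients, and in fact that
\[
(u_1,\dots,u_{j-1}):u_j \;=\; \bigl(x_t : t<\max(u_j)\ \text{and}\ x_t\nmid u_j\bigr)
\]
for every $j$; write $\mathrm{set}(u_j)$ for the set of indices $t$ occurring here. This is the technical heart of the lemma, and it is precisely here that strong (as opposed to plain) squarefree stability is used.

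For the inclusion ``$\supseteq$'': if $t<\max(u_j)=:m$ and $x_t\nmid u_j$, then $v:=x_tu_j/x_m$ is a squarefree monomial of degree $\deg u_j$ which lies in $I$ by strong squarefree stability (it is obtained from $u_j$ by lowering the index $m$ to $t$). Some minimal generator divides $v$; a degree count shows that this generator is either $v$ itself --- in which case $\max v<m$ --- or has strictly smaller degree, and in either case it precedes $u_j$ in our order, so $x_tu_j=v\cdot x_m\in(u_1,\dots,u_{j-1})$. For ``$\subseteq$'': one has $(u_1,\dots,u_{j-1}):u_j=\sum_{i<j}\bigl(v_i/\gcd(v_i,u_j)\bigr)$, and since $G(I)$ is a minimal generating set no $v_i$ divides $u_j$, so each monomial $v_i/\gcd(v_i,u_j)$ is divisible by some $x_t$ with $x_t\nmid u_j$; the remaining point is that such a $t$ can always be chosen below $\max(u_j)$. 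This is where the ``smaller $\max$ first'' tie-break and strong stability combine: if every variable of $v_i$ lying outside $u_j$ had index exceeding $\max(u_j)$, one pushes those indices down via strong squarefree stability to produce a generator contradicting the minimality of $u_j$ or of $v_i$ in $G(I)$.

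Granting the linear-quotients statement, the second step is homological bookkeeping. Building a free resolution of $R/I$ by iterated mapping cones along $u_1,\dots,u_r$, the $j$-th cone contributes to homological degree $p$ a free summand of rank $\binom{|\mathrm{set}(u_j)|}{p}$ generated in internal degree $p+\deg u_j$; since every colon ideal above is generated by linear forms, all comparison maps in the cones have entries in $\mathfrak{m}$, so the resulting resolution is minimal. Hence $\beta_{p,p+s}(I)=\sum_{u\in G(I),\,\deg u=s}\binom{|\mathrm{set}(u)|}{p}$. Finally, $|\mathrm{set}(u)|$ counts the indices in $\{1,\dots,\max u-1\}$ not lying in the support of $u$, of which there are $(\max u-1)-(\deg u-1)=\max u-\deg u$; substituting gives the stated formula. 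I expect the only genuinely delicate point to be the ``$\subseteq$'' inclusion in Step~1 --- ruling out that the colon ideal picks up a variable of index $\ge\max(u_j)$ --- while the mapping-cone minimality in Step~2 and the final count of $|\mathrm{set}(u)|$ are routine.
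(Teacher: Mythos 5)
The paper states this lemma as a citation to Gasharov--Hibi--Peeva and Murai and supplies no proof, so there is no in-text argument for you to match; what matters is whether your reconstruction is sound, and it is. The linear-quotients/iterated-mapping-cone route is a standard derivation of this formula (it is essentially the one Murai uses, via Herzog--Takayama), and every step you write is correct: the order (increasing degree, then increasing $\max$) has the property you need, the ``$\supseteq$'' inclusion is exactly as you say, the Herzog--Takayama minimality criterion applies because all the colon ideals are generated by variables, and the final count $|\mathrm{set}(u)| = \max u - \deg u$ is right.

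The one step you flag as delicate, the inclusion $(u_1,\dots,u_{j-1}):u_j \subseteq (x_t : t<\max u_j,\ x_t\nmid u_j)$, does hold, but the ``push indices down'' sentence should be spelled out, since it is where the hypothesis is actually used. Set $m=\max u_j$ and take $i<j$. If $\deg u_i = \deg u_j$ the claim is immediate (both supports lie in $\{1,\dots,m\}$ and are distinct sets of the same size, both containing $m$). If $\deg u_i < \deg u_j$ and, for contradiction, every index in $\mathrm{supp}(u_i)\setminus\mathrm{supp}(u_j)$ exceeded $m$, then $\mathrm{supp}(u_i)\cap\{1,\dots,m\}\subseteq\mathrm{supp}(u_j)$; because $|\mathrm{supp}(u_j)\setminus\mathrm{supp}(u_i)| = \deg u_j - |\mathrm{supp}(u_i)\cap\mathrm{supp}(u_j)|$ strictly exceeds the number of indices of $u_i$ above $m$, one can lower each such index in turn into a vacant slot of $\mathrm{supp}(u_j)$, staying inside $I$ by strong squarefree stability, and arrive at a monomial of $I$ of degree $\deg u_i<\deg u_j$ that divides $u_j$, contradicting minimality of $u_j$ in $G(I)$. (You should also note, as it is used implicitly throughout, that the strong squarefree exchange property extends from $G(I)$ to all squarefree monomials of $I$: any monomial of $I$ is a multiple of some $g\in G(I)$, and the exchange either acts on $g$ or leaves $g$ as a divisor.) With these details made explicit your proof is complete.
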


In~\eqref{eq:Gasharov_betti_formula}, we used that $|\{j\in [n]:\; j<\max u \text{ and } x_ju \text{ is squarefree}\}|=\max u - \deg u$ for any squarefree monomial $u$. Combining Proposition~\ref{prop:Strongly_stable} and Lemma~\ref{lem:Murai}, we can determine the Betti~numbers of the squarefree part of $\ini(I_{n,k})$.

\begin{proposition}
The Betti numbers $\beta_{p,p+s}(J_{n,k})$ are zero for $n<2s-k$ and otherwise are given by
\[
\beta_{p,p+s}(J_{n,k}) = C_{s - k}^{k-1}\binom{s - k}{p}.
\]
\end{proposition}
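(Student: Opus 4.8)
The plan is to assemble the structural results already in hand. By Proposition~\ref{prop:Strongly_stable}, the ideal $J_{n,k}$ is strongly squarefree stable, so the Gasharov--Hibi--Peeva formula of Lemma~\ref{lem:Murai} applies and gives
\[
\beta_{p,p+s}(J_{n,k}) = \sum_{\substack{u\in G(J_{n,k})\\ \deg u = s}}\binom{\max u - s}{p}.
\]
Thus everything reduces to describing the degree-$s$ minimal generators of $J_{n,k}$ and computing $\max u$ for each of them.

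First I would pin down $G(J_{n,k})$. By Theorem~\ref{thm:GBasis}, the minimal generators of $\ini(I_{n,k})$ are the squares $x_1^2,\dots,x_n^2$ together with the leading terms $\ini(g_{A,n,k}) = x_A$ for $A\in\mathcal{A}$. A squarefree monomial lying in $\ini(I_{n,k})$ cannot be divisible by any $x_i^2$, hence is divisible by some $x_A$; conversely each $x_A$ with $A\in\mathcal{A}$ is a minimal generator of $\ini(I_{n,k})$ and so is not divisible by any $x_{A'}$ with $A'\in\mathcal{A}$, $A'\neq A$. Therefore $G(J_{n,k}) = \{x_A : A\in\mathcal{A}\}$. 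Now by the defining condition of $\mathcal{A}$, every $A\in\mathcal{A}$ with $|A| = s$ satisfies $\max(A) = 2s-k$, so $\max(x_A) - s = (2s-k)-s = s-k$ is independent of the chosen generator. Hence the summand $\binom{\max u - s}{p} = \binom{s-k}{p}$ is constant over all degree-$s$ generators, and the sum in Lemma~\ref{lem:Murai} collapses to
\[
\beta_{p,p+s}(J_{n,k}) = \bigl|\{A\in\mathcal{A} : |A| = s\}\bigr|\cdot\binom{s-k}{p}.
\]

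It then remains to substitute the enumeration of these generators. Corollary~\ref{cor:ConvolutionCatalanNumbers}, applied with $r = s-k$, states that the number of $x_A$ with $|A|=s$ and $A$ minimal with respect to inclusion is $0$ when $n < 2r+k = 2s-k$ and equals $C_{s-k}^{k-1}$ otherwise. Plugging this in yields $\beta_{p,p+s}(J_{n,k}) = 0$ for $n < 2s-k$ and $\beta_{p,p+s}(J_{n,k}) = C_{s-k}^{k-1}\binom{s-k}{p}$ otherwise, which is the claim (the degenerate range $s < k$, where $J_{n,k}$ has no degree-$s$ generator, is covered by the convention $C_r^{k-1}=0$ for $r<0$).

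No step here is genuinely difficult; the proof is a bookkeeping synthesis of Proposition~\ref{prop:Strongly_stable}, Lemma~\ref{lem:Murai}, Theorem~\ref{thm:GBasis}, and Corollary~\ref{cor:ConvolutionCatalanNumbers}. The one point that deserves a moment of attention is the identification $G(J_{n,k}) = \{x_A : A\in\mathcal{A}\}$ together with the observation that $\max u$ is constant on each degree stratum of generators --- it is exactly this constancy that makes the Gasharov--Hibi--Peeva sum degenerate into a single product.
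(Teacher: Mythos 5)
Your proof is correct and follows essentially the same route as the paper's: both invoke Proposition~\ref{prop:Strongly_stable} to get strong squarefree stability, apply the Gasharov--Hibi--Peeva formula from Lemma~\ref{lem:Murai}, note that every degree-$s$ generator $x_A$ has $\max(A)=2s-k$ so $\max u - s = s-k$ is constant, and then count generators via Corollary~\ref{cor:ConvolutionCatalanNumbers}. Your identification of $G(J_{n,k})=\{x_A : A\in\mathcal{A}\}$ is spelled out in slightly more detail than the paper does, but the argument is the same.
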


\begin{proof}
By Proposition~\ref{prop:Strongly_stable}, the ideal $J_{n,k}$ is strongly squarefree stable. Hence, its Betti numbers are given by Lemma~\ref{lem:Murai}. By Theorem~\ref{thm:GBasis}, 
there are no minimal generators of the Gr\"obner basis of degree $s$ if $n < 2s - k$, and so the Betti numbers are zero for these values of $n$. Otherwise, for any minimal generator $u$ of degree $s$, we have that $\max u = 2s - k$. Moreover, by Corollary~\ref{cor:ConvolutionCatalanNumbers}, there are $C_{s-k}^{k-1}$ minimal generators of degree $s$ in $J_{n,k}$. Thus, the result follows by applying \eqref{eq:Gasharov_betti_formula}.
\end{proof}

We now apply Proposition~2.1 in \cite{Murai2008} to compute the Betti numbers of the ideal $\ini(I_{n,k})=J_{n,k}+(x_1^2,\ldots,x_n^2)$. Before presenting the formula, however, we need to fix the following notation.
For each squarefree monomial $u$ of $\ini(I_{n,k})$, we define
\[
A_p(u) = \sum_{j=1}^{t} \binom{i_j - 1}{p - j},\quad \text{if } u=x_{i_1}\cdots x_{i_t} \text{ with } i_1>\cdots > i_t.
\]
\begin{corollary}\label{cor:MuraiFormula}
The Betti numbers  
$\beta_{p,p+s}(\ini(I_{n,k}))$  
are given by
\begin{equation}\label{eq:Murai_formula}
\begin{split}
&\sum_{\substack{u\in \mathrm{SFP}(J_{n,k})\\ \deg(u)=s}} A_{p+1}(u)
-\sum_{\substack{u\in \mathrm{SFP}(J_{n,k})\\ \deg(u)=s-1}}
\biggl(\binom{n}{p+1}-A_{p+2}(u)\biggr)
+\delta_{s-1,p+1}\binom{n}{p+1},
\end{split}
\end{equation}
where $\delta_{s-1, p+1}$ equals $1$ if $s - 1 = p + 1$ and $0$ otherwise.
\end{corollary}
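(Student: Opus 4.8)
The plan is to invoke Murai's formula for the Betti numbers of a squarefree stable ideal plus powers, which in the quadratic-powers case ($x_1^2,\ldots,x_n^2$) specializes exactly to the displayed expression in \eqref{eq:Murai_formula}. Concretely, I would cite \cite[Proposition~2.1]{Murai2008}, which expresses $\beta_{p,p+s}$ of an ideal of the form $J + (x_1^{a_1},\ldots,x_n^{a_n})$, with $J$ squarefree stable, in terms of the quantities $A_p(u)$ attached to the squarefree generators $u$ of the ideal. Since $\ini(I_{n,k}) = J_{n,k} + (x_1^2,\ldots,x_n^2)$ and $J_{n,k}$ is strongly squarefree stable by Proposition~\ref{prop:Strongly_stable}, the hypotheses of that proposition are satisfied with all exponents $a_i = 2$.

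The work, then, is the bookkeeping of specializing Murai's general formula to exponents $2$. First I would recall that the minimal generating set of $\ini(I_{n,k})$ consists of the squares $x_1^2,\ldots,x_n^2$ together with the squarefree monomials in $\mathrm{SFP}(J_{n,k}) = \mathrm{SFP}(\ini(I_{n,k}))$, i.e.\ the leading terms $\ini(g_{A,n,k})$. In Murai's formula the contribution splits into three parts: a term collecting $A_{p+1}(u)$ over squarefree generators $u$ of degree $s$ (this is the "pure squarefree stable" part, matching Lemma~\ref{lem:Murai} after re-indexing); a correction term coming from the interaction between the squarefree generators of degree $s-1$ and the quadratic monomials $x_i^2$, which for exponent $2$ produces the factor $\binom{n}{p+1} - A_{p+2}(u)$ summed over $u$ of degree $s-1$; and finally the contribution of the $n$ quadratic generators themselves, which all have the same degree $2$ and hence contribute only when $s - 1 = p + 1$, giving the Kronecker-delta term $\delta_{s-1,p+1}\binom{n}{p+1}$. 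Assembling these three pieces yields exactly \eqref{eq:Murai_formula}.

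The main obstacle is purely notational: one must check that Murai's conventions for indexing the homological degree, the internal degree, and the combinatorial coefficients align with ours, in particular that the shift from $\beta_{p,p+s}$ in our grading to Murai's grading is handled consistently, and that the definition of $A_p(u)$ used here (writing $u = x_{i_1}\cdots x_{i_t}$ with $i_1 > \cdots > i_t$ and setting $A_p(u) = \sum_{j=1}^t \binom{i_j - 1}{p-j}$) is the same quantity appearing in \cite[Proposition~2.1]{Murai2008}. There is no new mathematical content beyond verifying that the exponent sequence $(2,2,\ldots,2)$ collapses the general powers-part of Murai's formula to the stated form, so the proof is short: after these identifications, the corollary follows immediately.

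\begin{proof}
Since $\ini(I_{n,k}) = J_{n,k} + (x_1^2,\ldots,x_n^2)$ with $J_{n,k}$ strongly squarefree stable by Proposition~\ref{prop:Strongly_stable}, the ideal $\ini(I_{n,k})$ is of the form treated in \cite[Proposition~2.1]{Murai2008}, namely a squarefree stable ideal plus a power ideal $(x_1^{a_1},\ldots,x_n^{a_n})$, here with $a_1 = \cdots = a_n = 2$. Applying that proposition, the Betti number $\beta_{p,p+s}(\ini(I_{n,k}))$ decomposes into three contributions. The first is the contribution of the squarefree generators $u$ of degree $s$, which equals $\sum_{u} A_{p+1}(u)$ with the sum over $u \in \mathrm{SFP}(J_{n,k})$ of degree $s$. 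The second is the contribution arising from the interaction of the squarefree generators of degree $s-1$ with the quadratic generators $x_i^2$; because every $a_i = 2$, the relevant summand for such a $u$ is $\binom{n}{p+1} - A_{p+2}(u)$, and these are subtracted. The third is the contribution of the $n$ quadratic generators $x_1^2,\ldots,x_n^2$ themselves, all of degree $2$; in homological degree $p$ they contribute $\binom{n}{p+1}$ precisely when their internal degree $2$ matches $s-1$, i.e.\ when $s - 1 = p + 1$, and nothing otherwise, giving the term $\delta_{s-1,p+1}\binom{n}{p+1}$. Summing the three contributions yields \eqref{eq:Murai_formula}.
\end{proof}
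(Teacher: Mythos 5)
Your proof takes essentially the same route as the paper: the corollary is obtained by specializing Murai's Proposition~2.1 to $\ini(I_{n,k}) = J_{n,k} + (x_1^2,\ldots,x_n^2)$ with all exponents $a_i = 2$, using Proposition~\ref{prop:Strongly_stable} to verify the squarefree-stability hypothesis, exactly as the paper does. One small slip in your explanation of the third term: the condition $s-1=p+1$ does not come from ``internal degree~$2$ matching $s-1$'' but from the Koszul resolution of $(x_1^2,\ldots,x_n^2)$, whose only nonzero Betti numbers in homological degree $p$ occur in internal degree $2(p+1)$, so that $\beta_{p,p+s}$ is nonzero precisely when $p+s = 2(p+1)$, i.e., $s-1=p+1$; the conclusion is correct either way.
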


\begin{example}\label{ex:Murai_formula}
Consider the ideal $\ini(I_{4,2})=(x_1 x_2, x_1 x_3 x_4, x_2 x_3 x_4)+(x_1^2,\ldots,x_4^2)$ and its squarefree part $J_{4,2}=(x_1 x_2, x_1 x_3 x_4, x_2 x_3 x_4)$.
We compile the table 
$$
\begin{array}{r||c|c|c|c|}
u & A_1(u) & A_2(u) & A_3(u) & A_4(u) \\
\hline 
x_1 x_2         & 1 & 2 & 0  & 0 \\
\hline 
x_1 x_2 x_3     & 1 & 3 & 3  & 0 \\
\hline 
x_1 x_2 x_4     & 1 & 4 & 5  & 1 \\
\hline 
x_1 x_3 x_4     & 1 & 4 & 6  & 2 \\
\hline 
x_2 x_3 x_4     & 1 & 4 & 6  & 3 \\
\hline 
x_1 x_2 x_3 x_4 & 1 & 4 & 6  & 4 \\
\end{array}
$$
of the nonzero values of $A_p(u)$.
Now we apply Corollary~\ref{cor:MuraiFormula} to obtain the Betti numbers of $\ini(I_{4,2})$,  
gathered in the Betti diagram
$$
\begin{array}{c|cccc}
      & 0 & 1 & 2 & 3\, \\
    \hline 
    2 & 5 & 2 & 0 & 0\, \\
    3 & 2 & 15 & 16 & 5\, \\
    4 & 0 & 0 & 0 & 0\, \\
    5 & 0 & 0 & 0 & 0, \\
\end{array}
$$
where $\beta_{p,p+s}$ is the entry in row $s$ and column $p$.
\end{example}

\subsection{Mapping cone resolution}
Herzog and Takayama introduced a general mapping cone procedure in~\cite{herzog2002resolutions} for constructing free resolutions of monomial ideals. 
Mayer-Vietoris trees offer a combinatorial method to compute the support of these resolutions~\cite{Saenz-De-Cabezon2009MVT}. 
We now provide the minimal resolution of $J_{n,k}$ using~mapping~cones. 

\smallskip

Let $I$ be a monomial ideal with a unique minimal generating set of monomials $G(I) = \{g_1, \dots, g_r\}$. Define $I_i = ( g_1, \dots, g_i )$ as the ideal generated by the first $i$ generators in a chosen ordering. For each $i$, we have the exact sequence
\begin{equation*}\label{eq:ses}
0 \longrightarrow I_{i-1} \cap ( g_i ) \stackrel{j}{\longrightarrow} I_{i-1} \oplus ( g_i )  \stackrel{l}{\longrightarrow} I_i \longrightarrow 0.
\end{equation*}
In this sequence, $g_i$ is referred to as a {\em pivot monomial}. 
Assume that free resolutions $\FF_{i}'$ and $\widetilde{\FF}_i$ are known for the ideals $I'_i=I_{i-1}$ and $\widetilde{I}_i = I_{i-1} \cap ( g_i )$, respectively. Then, a (not necessarily minimal) resolution $\FF_i$ of $I_i$ can be constructed as the mapping cone of the chain complex morphism $\psi: \widetilde{\FF}_i \longrightarrow \FF_{i}'$ that lifts the inclusion $j$. 
By applying this process recursively on $i$, we obtain a free resolution $\FF = \FF_r$ of $I$, known as an {\em iterated mapping cone resolution}.

\smallskip
A Mayer-Vietoris tree of a monomial ideal $I$, denoted by MVT$(I)$ is a binary tree that represents the sequence of ideals in the iterated mapping cone process. Each node in this tree contains an ideal, given by its minimal generating set, and is labeled with a pair $(p, d)$, where $p$ is the node's position and $d$ its dimension. The root node, labeled $(1, 0)$, contains the ideal $I = I_r$. For a node labeled $(p, d)$ containing an ideal $J$ with at least two generators, there are two child nodes: the left child, labeled $(2p, d+1)$ contains the ideal $\widetilde{J}$, and the right child, labeled $(2p + 1, d)$ contains the ideal $J'$.
Nodes at position $1$ and all even positions are referred to as the {\em relevant} nodes of the tree. 

By \cite[Theorem~1]{Saenz-De-Cabezon2009MVT}, there exists a resolution of $I$ supported on the relevant nodes of MVT$(I)$. This resolution is referred to as the {\em Mayer-Vietoris resolution} of $I$. The dimension of each node in MVT$(I)$ corresponds to the homological degree of the corresponding module in the resolution. The following proposition gives a sufficient condition for a Mayer-Vietoris resolution to be minimal.

\begin{proposition}[\cite{Saenz-De-Cabezon2009MVT}, Proposition~3]\label{prop:repeated_nodes}
Let $\mu \in \mathbb{N}^n$ be a multidegree such that $x^\mu$ is a minimal generator in exactly $k$ relevant nodes of the Mayer-Vietoris tree of $I$, and all of these nodes have the same dimension $i$. Then, $\beta_{i, \mu}(I) = k$. 
In particular, if $x^\mu$ appears as a minimal generator in only one relevant node,  
then $\beta_{i, \mu}(I) = 1$. 
\end{proposition}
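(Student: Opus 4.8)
The plan is to compute $\beta_{(i,\mu)}(I)=\dim_{\mathbf k}\bigl(\mathrm{Tor}^R_i(I,\mathbf k)\bigr)_\mu$ directly from the Mayer--Vietoris resolution $\FF$ of $I$, exploiting the hypothesis that, among the relevant nodes, the multidegree $\mu$ shows up only in homological degree $i$. Recall from \cite[Theorem~1]{Saenz-De-Cabezon2009MVT} that $\FF$ is supported on the relevant nodes of $\mathrm{MVT}(I)$ and that the homological degree in which a free generator of $\FF$ sits equals the dimension of the node that produces it. Since every node of the tree stores a monomial ideal via its unique minimal monomial generating set, the free generators contributed by a relevant node $N$ are in bijection with the minimal generators of the ideal at $N$, and a basis element coming from the generator $x^\nu$ has multidegree $\nu$. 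Consequently, for every $j$, the free generators of $\FF_j$ of multidegree $\mu$ are in bijection with the relevant nodes of dimension $j$ having $x^\mu$ in their minimal generating set.

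Next I would record that $\FF$ is $\mathbb Z^n$-graded. Each short exact sequence $0\to I_{j-1}\cap(g_j)\to I_{j-1}\oplus(g_j)\to I_j\to 0$ is one of multigraded modules, and in the iterated mapping cone every comparison morphism lifting the inclusion can be chosen multigraded, because a lift of a degree-$0$ chain map between bounded complexes of free $\mathbb Z^n$-graded modules can always be taken of degree $0$. Hence $\mathrm{Tor}^R_i(I,\mathbf k)_\mu=H_i\bigl(\FF\otimes_R\mathbf k\bigr)_\mu$ in the fine grading. Now fix $\mu$ and $i$ as in the statement. By hypothesis the only relevant nodes having $x^\mu$ as a minimal generator have dimension $i$, so by the first paragraph $\FF_j$ has no free generator of multidegree $\mu$ whenever $j\neq i$, that is, $(\FF_j\otimes_R\mathbf k)_\mu=0$ for $j\neq i$, while $(\FF_i\otimes_R\mathbf k)_\mu$ has dimension $k$. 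Thus, in multidegree $\mu$, the complex $\FF\otimes_R\mathbf k$ is concentrated in homological degree $i$, where it is $k$-dimensional, so $H_i(\FF\otimes_R\mathbf k)_\mu$ has dimension $k$ (and $H_j(\FF\otimes_R\mathbf k)_\mu=0$ for $j\neq i$). Therefore $\beta_{(i,\mu)}(I)=k$, and the ``in particular'' is the special case $k=1$.

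I expect the only delicate point to be the passage in the second paragraph: one must make sure the iterated mapping cone is carried out through $\mathbb Z^n$-graded comparison maps, so that the free generators of $\FF$ genuinely carry the multidegrees of the minimal generators attached to the relevant nodes. This is precisely the structural content supplied by \cite[Theorem~1]{Saenz-De-Cabezon2009MVT}; once it is invoked, the conclusion is immediate, since the hypothesis forces the complex $\FF\otimes_R\mathbf k$ to have, in multidegree $\mu$, homology in the single homological degree $i$, with a direct count giving rank $k$ there.
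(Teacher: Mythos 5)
Your argument is correct and, since the paper merely cites this statement from \cite{Saenz-De-Cabezon2009MVT} without reproducing a proof, it fills in exactly what one expects the reference's own proof to contain. The core mechanism is sound: because the Mayer--Vietoris resolution $\FF$ is a $\mathbb{Z}^n$-graded free resolution (not necessarily minimal) whose free generators in homological degree $j$ carry precisely the multidegrees of the minimal generators appearing in relevant nodes of dimension $j$, one computes $\beta_{(i,\mu)}(I)=\dim_{\kb}\mathrm{Tor}^R_i(I,\kb)_\mu$ via $\FF\otimes_R\kb$. After tensoring, $(\FF_j\otimes_R\kb)_\mu$ records only the generators of $\FF_j$ of multidegree exactly $\mu$, so the hypothesis that $\mu$ occurs in relevant nodes only in dimension $i$ forces the multidegree-$\mu$ strand of $\FF\otimes_R\kb$ to be concentrated in a single homological degree, where it has rank $k$; the differentials on either side vanish for degree reasons, giving $\beta_{(i,\mu)}(I)=k$. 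The one caveat you already flag yourself, namely that the comparison maps in the iterated mapping cone must be chosen $\mathbb{Z}^n$-graded, is indeed the only step that needs care, and your justification (degree-zero lifts exist between bounded complexes of free multigraded modules) is the standard one. Nothing is missing.
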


\begin{remark}
In any non-minimal resolution, there is at least one reduction pair of generators in consecutive homological degrees with the same multidegree. Thus, a sufficient condition for a Mayer-Vietoris resolution to be minimal is the absence of repeated multidegrees in consecutive dimensions.
\end{remark}

\begin{example}\label{ex:MVT}
{Consider the ideal $I_{4,2} = ( x_1x_2, x_1x_3x_4,x_3x_3x_4,x_1^2,x_2^2,x_3^2,x_4^2 )$}. Figure \ref{fig:MVT} depicts a Mayer-Vietoris tree of $I_{4,2}$, in which we display the first few nodes. Each node is labeled with its position $p$, dimension $d$, and the corresponding ideal $I$, denoted as $(p, d) \, I$. The relevant nodes are shaded darker.
\begin{figure}
\begin{center}
 \begin{tikzpicture}[scale=0.6,transform shape]
 \tikzstyle{level 1}=[sibling distance=9cm]
 \tikzstyle{level 2}=[sibling distance=5cm]
 \tikzstyle{level 3}=[sibling distance=3cm] 
 \node{$(1,0)$ $x_1x_2,x_1x_3x_4,x_2x_3x_4, x_1^2, x_2^2, x_3^2,x_4^2$}
 child{ node{$(2,1)$ $x_4^2(x_1x_2, x_1x_3,x_2x_3,x_1^2, x_2^2,x_3^2)$}
 	child{node[left=.5cm,rectangle,draw]{$(4,2)$ $x_3^2x_2^2(x_1,x_2)$}}
 	child{node[left=.3cm,xshift=.2cm,color=black!50!white]{$(5,1)$ $x_4^2(x_1x_2, x_1x_3,x_2x_3,x_1^2, x_2^2)$}
 	      child{node[xshift=-1cm,rectangle,draw]{$(10,2)$ $x_2^2x_4^2(x_1,x_3)$}}
          child{node[below=.5cm,xshift=-1cm,color=black!50!white]{$(11,1)$ $x_4^2(x_1x_2,x_1x_3,x_2x_3,x_1^2)$}
          child{node[xshift=-1cm,rectangle,draw]{$(22,2)$ $x_1^2x_4^2(x_2,x_3)$}}
	child{node[below=.6cm,xshift=-1cm,color=black!50!white,rectangle,draw]{$(23,1)$ $x_4^2(x_1x_2,x_1x_3,x_2x_3)$}}}
          }}
 child{ node[color=black!50!white]{$(3,0)$ $x_1x_2,x_1x_3x_4,x_2x_3x_4, x_1^2, x_2^2, x_3^2$}
 	child{node[xshift=-.3cm]{$(6,1)$ $x_3^2(x_1x_2,x_1x_4,x_2x_4, x_1^2, x_2^2)$}
    child{node[xshift=-1cm,rectangle,draw]{$(12,2)$ $x_2^2x_3^3(x_1,x_4)$}}
	child{node[below=.5cm,xshift=-1cm,color=black!50!white]{$(13,1)$ $x_3^2(x_1x_2,x_1x_4,x_2x_4, x_1^2)$}
    child{node[xshift=-1cm,rectangle,draw]{$(26,2)$ $x_1^2x_3^2(x_2,x_4)$}}
	child{node[below=.6cm,xshift=-1cm,color=black!50!white,rectangle,draw]{$(27,1)$ $x_3^2(x_1x_2,x_1x_4,x_2x_4)$}}}}
 	child{node[color=black!50!white]{$(7,0)$ $x_1x_2,x_1x_3x_4,x_2x_3x_4, x_1^2, x_2^2$}
	child{node[xshift=-1cm,rectangle,draw]{$(14,1)$ $x_2^2(x_1,x_3x_4)$}}
	child{node[below=.5cm,xshift=-1cm,color=black!50!white]{$(15,0)$ $x_1x_2,x_1x_3x_4,x_2x_3x_4, x_1^2$}
    child{node[xshift=-1cm,rectangle,draw]{$(30,1)$ $x_1^2(x_2,x_3x_4)$}}
	child{node[below=.6cm,xshift=-1cm,color=black!50!white,rectangle,draw]{$(31,0)$ $x_1x_2,x_1x_3x_4,x_2x_3x_4$}}}
       }};
 \end{tikzpicture}
\end{center}
\caption{Partial display of the Mayer-Vietoris tree MVT$(\ini(I_{4,2}))$.}\label{fig:MVT}
\end{figure}

Since MVT$(I_{4,2})$ has no repeated multidegrees in nodes of consecutive dimensions, the corresponding resolution of $I_{4,2}$ is minimal. In particular, $\beta_0(I_{4,2}) = 7$, $\beta_1(I_{4,2}) = 17$, $\beta_2(I_{4,2}) = 16$ and $\beta_3(I_{4,2}) = 5$, with their multidegrees given by the monomials in the nodes at position $1$ for $\beta_0$, at positions $2$, $6$, $14$, $30$, $62$ and $126$ for $\beta_1$, at positions $4$, $10$, $12$, $22$, $26$, $28$ $46$, $54$, $56$, $60$, $94$ and $110$ for $\beta_2$, and at positions $8$, $20$, $24$, $44$ and $52$ for $\beta_3$.
\end{example}

We are now ready to construct a Mayer-Vietoris resolution of $\ini(I_{n,k})$. However, before proceeding, we first recall the following well-known lemmas.

\begin{lemma}\label{lem1}
Let $I \subseteq \kb[x_1, \dots, x_n]$ be a squarefree stable monomial ideal, and let $\sigma = \{x_{i_1}, \dots, x_{i_k}\}$ be a subset of the variables. The ideal $I_\sigma = ( g_\sigma \mid g \in G(I) ) \subseteq \kb[\{x_1, \dots, x_n\} \setminus \sigma]$ is squarefree stable, where $g_\sigma$ is the monomial obtained by setting each variable in $\{x_1, \dots, x_n\} \setminus \sigma$ equal to $1$ in the monomial $g$.
\end{lemma}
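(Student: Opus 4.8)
The plan is to prove that $I_\sigma$ is squarefree stable by working directly from the definition, transporting the stability condition of $I$ through the operation $g \mapsto g_\sigma$. Recall that a squarefree monomial ideal $J$ is squarefree stable if for every $ux_j \in G(J)$ and every $i < j$ with $x_i \nmid u$, the monomial $ux_i$ lies in $J$ (equivalently, is divisible by some generator of $J$). Note first that $g_\sigma$ is well-defined as a monomial in the smaller ring $\kb[\{x_1,\dots,x_n\}\setminus\sigma]$: since $g$ is squarefree, setting the variables in $\{x_1,\dots,x_n\}\setminus\sigma$ to $1$ just deletes those variables from the support of $g$, leaving the squarefree monomial $\prod_{x_\ell \in \mathrm{supp}(g)\cap \sigma} x_\ell$. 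So $I_\sigma$ is indeed a squarefree monomial ideal generated by $\{g_\sigma : g \in G(I)\}$ (after discarding repetitions and non-minimal elements).

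First I would fix a minimal generator $v x_j$ of $I_\sigma$, where $x_j \in \sigma$, and an index $i < j$ with $x_i \in \sigma$, $x_i \nmid v$; the goal is to show $v x_i \in I_\sigma$. Choose $g \in G(I)$ with $g_\sigma = v x_j$. The subtlety is that $g$ itself may involve variables outside $\sigma$, i.e. $g = w \cdot v x_j$ where $w$ is a squarefree monomial supported on $\{x_1,\dots,x_n\}\setminus\sigma$. Since $x_i \notin \mathrm{supp}(v x_j)$ and $x_i \in \sigma$ while $w$ is supported off $\sigma$, we have $x_i \nmid g$; also $i < j$ and $x_j \mid g$. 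By squarefree stability of $I$ applied to $g = (g/x_j)\,x_j$ — here $x_i \nmid g/x_j$ since $x_i \nmid g$ — we conclude $(g/x_j) x_i \in I$, hence it is divisible by some $h \in G(I)$. Applying the $(-)_\sigma$ operation, which respects divisibility of monomials (if $a \mid b$ then $a_\sigma \mid b_\sigma$), we get that $h_\sigma \in G(I)_\sigma$ divides $((g/x_j)x_i)_\sigma = (g_\sigma/x_j)\,x_i = v x_i$. Therefore $v x_i \in I_\sigma$, as required.

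The one point needing a little care is the claim that $(-)_\sigma$ respects divisibility and that $((g/x_j)x_i)_\sigma = (g_\sigma / x_j)\, x_i$; both follow because on squarefree monomials the operation is simply "intersect the support with $\sigma$," and $x_i, x_j \in \sigma$ while the deleted part $w$ of $g$ lies off $\sigma$, so $w_\sigma = 1$ and the support bookkeeping goes through cleanly. I would also remark that the hypothesis only requires $I$ to be squarefree stable (not strongly so), and that the argument never uses anything about $n$ or $k$ — it is a purely formal closure property of squarefree stable ideals under the specialization $x_\ell \mapsto 1$ for $\ell$ outside a fixed variable subset. I do not anticipate a serious obstacle here; the main thing to get right is the translation of indices and supports between the two polynomial rings, and in particular keeping track of the fact that a generator of $I$ mapping to a generator of $I_\sigma$ may have been "padded" by variables outside $\sigma$ that disappear under specialization.
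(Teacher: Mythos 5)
The paper offers no proof of this lemma (it is stated as one of several ``well-known lemmas''), so I can only evaluate your argument on its own terms. The mechanics of your proof are fine: you correctly observe that a generator $g \in G(I)$ mapping to a generator $vx_j$ of $I_\sigma$ may be ``padded'' by variables that vanish under the specialization, you apply the stability condition of $I$ at the index $j$, and you check that specialization respects divisibility so the conclusion passes to $I_\sigma$. That part is clean.

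However, your closing remark --- ``the hypothesis only requires $I$ to be squarefree stable (not strongly so)'' --- is wrong, and the error is worth understanding because it is hidden in the definition you quote. What you call ``squarefree stable'' (the condition for \emph{every} $ux_j \in G(I)$ and \emph{every} $i < j$ with $x_i \nmid u$) is precisely what the paper defines as \emph{strongly} squarefree stable. Squarefree stable in the weaker, standard sense only imposes the exchange condition at $j = \max(\mathrm{supp}(ux_j))$. Your argument genuinely needs the strong form: when $g$ involves variables outside $\sigma$ of larger index than $j$, the index $j$ at which you invoke the condition is \emph{not} the maximum of $\mathrm{supp}(g)$, so the weak condition says nothing. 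And the conclusion really does fail for merely squarefree stable input: take $I = (x_1x_2,\, x_2x_3) \subseteq \kb[x_1,x_2,x_3]$, which is squarefree stable but not strongly so (moving $x_2 \mapsto x_1$ in $x_2x_3$ would require $x_1x_3 \in I$). Specializing at $x_3 = 1$ (equivalently $\sigma = \{x_1,x_2\}$) gives $I_\sigma = (x_2) \subseteq \kb[x_1,x_2]$, which is not squarefree stable since $x_1 \notin (x_2)$. So the lemma is false as literally worded; the correct hypothesis is ``strongly squarefree stable'', which is what $J_{n,k}$ satisfies (Proposition~\ref{prop:Strongly_stable}) and what your proof actually uses. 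One last note: the lemma as printed has a small inconsistency between the description of $g_\sigma$ (setting the variables \emph{outside} $\sigma$ to $1$, leaving support in $\sigma$) and the declared ambient ring $\kb[\{x_1,\dots,x_n\}\setminus\sigma]$; you inherited this, but your proof is self-consistent under the ``keep $\sigma$'' reading, and the argument is symmetric in $\sigma$ versus its complement, so nothing is lost.
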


\begin{lemma}\label{lem2}
Let $I \subseteq \kb[x_1, \dots, x_n]$ be a monomial ideal, and let $y^\mu \in \kb[y_1, \dots, y_m]$ be a monomial in a different set of variables. Then the minimal free resolutions of $I$ and $I' = y^\mu I = ( y^\mu g \mid g \in G(I) ) \subseteq \kb[x_1, \dots, x_n, y_1, \dots, y_m]$ are isomorphic.
\end{lemma}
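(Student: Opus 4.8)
The plan is to reduce the statement to flat base change: the inclusion $R:=\kb[x_1,\dots,x_n]\hookrightarrow T:=\kb[x_1,\dots,x_n,y_1,\dots,y_m]$ is a flat ring extension, and multiplication by the monomial $y^\mu$ is an isomorphism of graded $T$-modules up to an internal degree shift by $|\mu|:=\deg(y^\mu)$. Concretely, I would fix a minimal graded free resolution
\[
F_\bullet:\ \cdots \to F_i \xrightarrow{\varphi_i} F_{i-1}\to \cdots \to F_0 \to I \to 0
\]
of $I$ over $R$, so that every entry of every matrix $\varphi_i$ lies in the homogeneous maximal ideal $(x_1,\dots,x_n)$ of $R$. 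Since $T\cong R[y_1,\dots,y_m]$ is a free, hence flat, $R$-module, the complex $F_\bullet\otimes_R T$ is a graded free resolution of $I\otimes_R T\cong IT$, the ideal of $T$ generated by $I$, with the same ranks $\mathrm{rank}\,F_i$ in every homological degree. Moreover the entries of the differentials of $F_\bullet\otimes_R T$ still lie in $(x_1,\dots,x_n)\subseteq (x_1,\dots,x_n,y_1,\dots,y_m)$, the homogeneous maximal ideal of $T$, so this resolution is again minimal; hence $\beta_{i,j}(IT)=\beta_{i,j}(I)$ for all $i,j$ (Betti numbers over $T$ on the left, over $R$ on the right).

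Next I would observe that multiplication by $y^\mu$ defines a graded $T$-module map $IT(-|\mu|)\to T$ with image $y^\mu\cdot IT = (\,y^\mu g\mid g\in G(I)\,)=I'$; as $T$ is a domain, $y^\mu$ is a nonzerodivisor, so this map is an isomorphism $IT(-|\mu|)\xrightarrow{\ \sim\ } I'$ of graded $T$-modules. Consequently $(F_\bullet\otimes_R T)(-|\mu|)$ is a minimal graded free resolution of $I'$. In particular the minimal free resolutions of $I$ and $I'$ have the same length and the same ranks in every homological degree, and their graded Betti numbers agree after the shift, $\beta_{i,j}(I')=\beta_{i,j-|\mu|}(I)$; this is the asserted isomorphism, understood up to the degree shift by $\deg(y^\mu)$.

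There is no genuine obstacle here; the two points that need care are (i) verifying that minimality survives $-\otimes_R T$, which works precisely because the entries of the differentials of $F_\bullet$ involve only the variables $x_1,\dots,x_n$ and so remain in the maximal ideal of $T$, and (ii) making the internal shift by $|\mu|$ explicit so that ``isomorphic'' is unambiguous. A purely combinatorial alternative is available, since both ideals are monomial and the lcm-lattice of $G(I')$ is that of $G(I)$ with every join multiplied by $y^\mu$, so any lcm-lattice-supported resolution (for instance a Mayer--Vietoris resolution) transports verbatim; but the flat base change argument is shorter and keeps the degree bookkeeping transparent.
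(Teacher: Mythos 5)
The paper states this as a ``well-known lemma'' and does not supply a proof, so there is no in-paper argument to compare against. Your flat base change proof is correct and is one of the standard ways to prove it: tensoring a minimal free resolution $F_\bullet$ of $I$ over $R=\kb[x_1,\dots,x_n]$ with the flat extension $T=\kb[x_1,\dots,x_n,y_1,\dots,y_m]$ gives a free resolution of $IT$ whose differentials still have entries in $(x_1,\dots,x_n)\subseteq\mathfrak{m}_T$, hence minimal; and multiplication by the nonzerodivisor $y^\mu$ gives a graded isomorphism $IT(-|\mu|)\xrightarrow{\sim} I'$, so the shifted complex resolves $I'$ minimally. You are also right to flag that ``isomorphic'' must be read either ungraded or up to the internal shift by $\deg(y^\mu)$; the paper is informal on this point, but what it actually uses downstream is just the equality of Betti numbers up to shift, which your argument delivers. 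Your remark about the lcm-lattice being unchanged (every join scaled by $y^\mu$) is closer in spirit to the Mayer--Vietoris framework in which the paper invokes the lemma, and would serve equally well.
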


\begin{lemma}[\cite{Saenz-De-Cabezon2009MVT}, Proposition 7 and Remark 7]\label{lem3}
 For every squarefree stable ideal $I$ there is a Mayer-Vietoris tree whose corresponding resolution 
 is minimal.
\end{lemma}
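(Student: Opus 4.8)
The plan is to build a particular Mayer--Vietoris tree of $I$ by peeling the pivots in a \emph{linear quotients order} of the generators, and then to note that every mapping cone carried out along the way is minimal. Recall from Herzog--Takayama \cite{herzog2002resolutions} that a squarefree stable ideal $I$ has linear quotients: its minimal generators can be listed as $G(I)=\{g_1,\dots,g_r\}$ so that for each $i$ the colon ideal $(g_1,\dots,g_{i-1}):g_i$ is generated by the variables $\{x_j : j\in J_i\}$, where $J_i=\{j<\max(g_i): x_j\nmid g_i\}$ and hence $|J_i|=\max(g_i)-\deg(g_i)$. Construct $\mathrm{MVT}(I)$ by choosing, down the right spine, the pivots $g_r,g_{r-1},\dots,g_1$; each right child $(g_1,\dots,g_{i-1})$ inherits this order, so the recursion is legitimate and the spine stays in dimension $0$. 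At the node whose pivot is $g_i$ the left child is
\[
(g_1,\dots,g_{i-1})\cap(g_i)=g_i\cdot(x_j : j\in J_i),
\]
a monomial $g_i$ times the ideal generated by the $|J_i|$ variables indexed by $J_i$, which is a complete intersection of monomials. Continue the subtree below that node using these variables as successive pivots; this realizes the Koszul complex on $|J_i|$ elements shifted by $g_i$, a minimal Mayer--Vietoris subtree, and stripping the common factor $g_i$ (coprime to the $x_j$, $j\in J_i$) preserves minimality --- the evident extension of Lemma~\ref{lem2} to a common monomial factor in the same variables.

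Now I would check that splicing keeps minimality. At the $g_i$-node the Herzog--Takayama mapping cone is formed along a comparison map lifting the inclusion $g_i\cdot(x_j : j\in J_i)\hookrightarrow(g_1,\dots,g_{i-1})$; since the colon ideal is generated by the variables $x_j$ with $x_j\nmid g_i$, this comparison map has all entries in $\mathfrak m=(x_1,\dots,x_n)$, so the mapping cone is minimal --- this is precisely the standard reason linear quotients produce a minimal iterated mapping cone, and the same holds inside the Koszul subtrees. By \cite[Theorem~1]{Saenz-De-Cabezon2009MVT} the resulting iterated mapping cone is supported on the relevant nodes of $\mathrm{MVT}(I)$, so this Mayer--Vietoris resolution is the minimal free resolution of $I$. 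As a consistency check, the subtree at pivot $g_i$ contributes $\binom{|J_i|}{p}$ generator slots in homological degree $p$ (for $p=0$ this is the occurrence of $g_i$ in the root node), whence
\[
\operatorname{rank}F_p=\sum_{i=1}^{r}\binom{|J_i|}{p}=\sum_{u\in G(I)}\binom{\max(u)-\deg(u)}{p},
\]
recovering the Gasharov--Hibi--Peeva formula behind Lemma~\ref{lem:Murai}.

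The real content is the Herzog--Takayama input that a squarefree stable ideal admits a linear quotients order whose colon ideals are generated exactly by the variables $x_j$, $j\in J_i$; given that, the tree above is essentially forced and its minimality follows. The one pitfall is that the easy sufficient criterion of the Remark after Proposition~\ref{prop:repeated_nodes} --- no multidegree repeated in relevant nodes of consecutive dimensions --- is unavailable, since a squarefree stable ideal may well have $\beta_{p,\mu}\ne 0$ and $\beta_{p+1,\mu}\ne 0$ for the same multidegree $\mu$; minimality must therefore be certified by the mapping cones being minimal (equivalently, by the rank count above), not by that shortcut.
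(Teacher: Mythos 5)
The paper does not prove this lemma; it cites it directly to [Saenz-De-Cabez\'on 2009], Proposition 7 and Remark 7, so there is no in-paper proof to compare against. Judged on its own merits, your proof is essentially correct and is the standard modern argument: squarefree stable ideals admit an admissible ordering with linear quotients whose colon ideals are generated by the variables $x_j$ with $j<\max(g_i)$, $x_j\nmid g_i$, and the iterated mapping cone along that ordering is precisely the (minimal) Aramova--Herzog--Hibi resolution; the Mayer--Vietoris tree you describe is exactly the tree organization of this iterated cone, with the left subtrees encoding the Koszul complex on $(x_j:j\in J_i)$ shifted by $g_i$. Two small points worth flagging. First, the assertion that ``the comparison map has all entries in $\mathfrak m$, so the mapping cone is minimal'' is not a consequence of linear quotients alone; it requires the \emph{regular decomposition function} hypothesis of Herzog--Takayama, which does hold for (squarefree) stable orderings, but should be invoked rather than treated as automatic. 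Alternatively, your rank count $\operatorname{rank}F_p=\sum_{u}\binom{\max(u)-\deg(u)}{p}$ matched against the known Betti numbers (Lemma~\ref{lem:Murai}) already forces minimality without examining the maps, and could be promoted from ``consistency check'' to the main argument. Second, your appeal to ``an extension of Lemma~\ref{lem2}'' for stripping $g_i$ is unnecessary: by construction $J_i\cap\operatorname{supp}(g_i)=\emptyset$, so $g_i$ and the $x_j$, $j\in J_i$, genuinely involve disjoint variables and Lemma~\ref{lem2} applies as stated. Your closing remark about the pitfall is correct and worth keeping: a squarefree stable ideal can indeed have $\beta_{p,\mu}\ne0$ and $\beta_{p+1,\mu}\ne0$ for the same multidegree (e.g.\ $I=(x_1x_2,\,x_2x_3,\,x_2x_4,\,x_1x_3x_4)$ has $\beta_{1,x_1x_2x_3x_4}=\beta_{2,x_1x_2x_3x_4}=1$), so the sufficient criterion of the remark after Proposition~\ref{prop:repeated_nodes} cannot be used to certify minimality for this class.
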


\noindent{\bf A Mayer-Vietoris resolution of $\ini(I_{n,k})$.} Assume that $n = k + 2m$. Let $r$ denote the number of squarefree generators of $\ini(I_{n,k})$, which, by Corollary~\ref{cor:ConvolutionCatalanNumbers}, is given by $r = \sum_{i=0}^m C_{i}^{k-1}$.
Let $J_{n,k}$ be the squarefree part of $\ini(I_{n,k})$. 

\medskip
Consider the Mayer-Vietoris tree of $\ini(I_{n,k})$, where we select as the pivot, whenever possible, the monomial corresponding to the last squared variable among the generators of the ideal that is not a common factor of all the generators. For example, at the node in position $0$, we choose $x_n^2$ as the pivot; at the node in position $2$, we choose $x_{n-1}^2$ (since $x_n^2$ is a common factor); and at the node in position $3$, we again choose $x_{n-1}^2$. The first iteration of the iterated cone procedure yields the initial elements of MVT$(\ini(I_{n,k}))$:

\begin{center}
 \begin{tikzpicture}[scale=.8,transform shape]
 \tikzstyle{level 1}=[sibling distance=8.5cm]
 \tikzstyle{level 2}=[sibling distance=6cm]
 \node{$(1,0)$ $g_1,\dots,g_r,x_1^1,x_2^2,\dots,x_n^2$}
 child{ node{$(2,1)$ $x_n^2(g_1\vert_{x_n=1},\dots, g_r\vert_{x_n=1},\dots,x_1^2,x_2^2,\dots,x_{n-1}^2)$}}
 child{ node[color=black!50!white]{$(3,0)$ $g_1,\dots,g_r,x_1^1,x_2^2,\dots,x_{n-1}^2$}};
 \end{tikzpicture}
 \end{center}
The notation $x^\mu(x^{\nu_1}, \dots, x^{\nu_k})$ above means that each monomial $x^{\nu_j}$ is multiplied by $x^\mu$, and the monomial $g_i \vert_{x_k = 1}$ is obtained by evaluating the variable $x_k$ in $g_i$ to $1$.~This process continues until there are no more common squared variables that are not common factors. At this point, we reach nodes of the form $x_{i_1}^2 \cdots x_{i_k}^2(x^{\nu_1}, \dots, x^{\nu_l})$, where each monomial $x^{\nu_j}$ is in $\kb[\{x_1, \dots, x_n\} \backslash \{x_{i_1}, \dots, x_{i_k}\}]$. Nodes whose non-common part is squarefree are called {\em final nodes}. In Example~\ref{ex:MVT}, the final nodes are those bordered at positions $4,10,12,14,22,23,26,27,30$ and $31$.
\begin{proposition}
The cone resolution supported on the Mayer-Vietoris tree constructed above provides a minimal free resolution for $\ini(I_{n,k})$.
\end{proposition}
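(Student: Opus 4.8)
The goal is to show that the Mayer--Vietoris resolution of $\ini(I_{n,k})$ supported on the tree constructed above is minimal, and the plan is to verify the sufficient condition recorded in the remark following Proposition~\ref{prop:repeated_nodes}: it is enough to show that no multidegree occurs as a minimal generator in two relevant nodes whose dimensions differ by one. The first step is to make the relevant nodes explicit. Because the pivot is always chosen to be the largest-indexed squared variable that is not a common factor, every node of the tree has an ideal of the form $x_\sigma^2\cdot\big((J_{n,k})|_{x_\sigma=1}+(x_i^2:\ i\in P)\big)$, where $\sigma\subseteq[n]$ is the set of variables whose squares have been used as pivots on left turns along the path from the root, $P\subseteq[n]$ records which squared variables are still present, and $(J_{n,k})|_{x_\sigma=1}$ denotes $J_{n,k}$ with the variables indexed by $\sigma$ set to $1$. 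A \emph{final node} is reached exactly when every $x_i^2$ with $i\in P$ is redundant, i.e.\ when the minimal generating set of the non-common part is that of a squarefree monomial ideal $L_\sigma=(J_{n,k})|_{x_\sigma=1}$; below such a node the tree continues as a Mayer--Vietoris tree of $x_\sigma^2 L_\sigma$.

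The second step is the structural claim that each $L_\sigma$ is squarefree stable. By Proposition~\ref{prop:Strongly_stable}, $J_{n,k}$ is strongly squarefree stable, and by Lemma~\ref{lem1} the restriction $(J_{n,k})|_{x_\sigma=1}$ is again squarefree stable; one also needs the combinatorial fact, read off from Theorem~\ref{thm:GBasis} and Lemma~\ref{lem:LatticePathBijections}, that the greedy descending pivot order always reaches a final node before the full support of any generator has been pivoted, so the degenerate case $L_\sigma=(1)$ is harmless. Granting this, Lemma~\ref{lem2} identifies the minimal free resolution of the final-node ideal $x_\sigma^2 L_\sigma$ with that of $L_\sigma$ up to the shift of multidegrees by $\prod_{i\in\sigma}x_i^2$, and Lemma~\ref{lem3} furnishes, for $L_\sigma$, a Mayer--Vietoris tree whose associated resolution is minimal. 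Multiplying every monomial of that tree by $\prod_{i\in\sigma}x_i^2$, which is a common factor and hence commutes with all the $\mathrm{lcm}$ operations, produces a minimal Mayer--Vietoris subtree for $x_\sigma^2 L_\sigma$, which is precisely what gets grafted in below the corresponding final node. It then remains to check that the strict upper part of the tree, together with these grafted subtrees, produces no consecutive-dimension multidegree collision.

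The third step, which I expect to be the main obstacle, is this collision check. Inside a single grafted subtree there is no such collision by Lemma~\ref{lem3}, and since the minimal resolution of a squarefree monomial ideal has squarefree Betti multidegrees, every multidegree appearing below a final node with pivoted set $\sigma$ has the form $\prod_{i\in\sigma}x_i^2$ times a squarefree monomial supported off $\sigma$. Two distinct final nodes are reached by different left-turn patterns, so either their pivoted sets $\sigma\neq\sigma'$ differ --- and then the exponents on the squared variables separate the multidegrees --- or the two nodes sit in the same dimension, in which case a coincidence is harmless by Proposition~\ref{prop:repeated_nodes}. The delicate part is the strict upper part, where the minimal generators of a relevant node with pivoted set $\sigma$ are $\prod_{i\in\sigma}x_i^2$ times either a squarefree monomial or some $x_i^2$; here one uses the explicit lattice-path description of $G(J_{n,k})$ from Theorem~\ref{thm:GBasis} together with Lemma~\ref{lem:LatticePathBijections} and Corollary~\ref{cor:ConvolutionCatalanNumbers} to show that along every root-to-leaf path the common factor $\prod_{i\in\sigma}x_i^2$ strictly grows as the dimension increases, so that any minimal generator produced at such a node is either already a minimal generator of its parent (and so contributes nothing new to the mapping cone) or carries strictly more squared-variable content than anything lying strictly below it. In either case no multidegree is repeated in two consecutive dimensions, and the Mayer--Vietoris resolution is minimal; as a consistency check, the Betti numbers it produces agree with the formula of Corollary~\ref{cor:MuraiFormula}.
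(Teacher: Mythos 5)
Your proposal follows essentially the same route as the paper's proof: describe the relevant nodes as $x_\sigma^2$ times a squarefree stable ideal plus remaining squares (via Proposition~\ref{prop:Strongly_stable} and Lemma~\ref{lem1}), observe that the trees hanging from final nodes are minimal by Lemmas~\ref{lem2} and~\ref{lem3}, and rule out consecutive-dimension multidegree collisions in the upper part by the separation coming from the distinct squared-variable common factors. Your step three restates the paper's ``the common factor differs in each subtree'' argument with more words; both presentations leave the same amount of detail to the reader, so this is the same proof.
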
 

\begin{proof}
We only need to show the minimality of the associated Mayer-Vietoris resolution. Note that there is no common multidegree among relevant nodes in the subtrees attached to non-final nodes, since the common factor differs in each subtree. Moreover, by Lemma~\ref{lem1}, all ideals in relevant nodes are of the form $x_{i_1}^2 \cdots x_{i_d}^2(J)$, where $J$ is a squarefree stable ideal in the variables $\{x_1, \dots, x_n\} \backslash \{x_{i_1}, \dots, x_{i_d}\}$. Now, the only possible repeated multidegrees in MVT$(\ini(I_{n,k}))$ arise in the subtrees attached to final nodes. By Lemmas~\ref{lem2} and~\ref{lem3}, the cone resolutions of each of these subtrees are minimal, since the root of each of these subtrees have the form of a monomial times a squarefree stable ideal in a different set of variables. Thus the resolution of $\ini(I_{n,k})$ is minimal.
\end{proof}

\begin{corollary}
The projective dimension of $\ini(I_{n,k})$ is $n - 1$, and its regularity is $k + m$.
\end{corollary}

\begin{proof}
The final nodes with the highest dimension and degree in the Mayer-Vietoris tree of $\ini(I_{n,k})$ are obtained by sequentially choosing as pivots sequences of squared variables of length $k+m-1$. The final node at the end of such a sequence has the form $x_{l_1}^2 \cdots x_{l_{k+m-1}}^2(x_{l'_1}, \dots, x_{l'_{m+1}})$, with dimension $k+m-1$. From each of these nodes, a tree isomorphic to the one corresponding to the prime ideal $ (x_{l'_1}, \dots, x_{l'_{m+1}})$ hangs. This prime ideal has projective dimension $m$, so the highest dimension of any node in the tree is $ (k+m-1) + m = k + 2m - 1 = n - 1$. Since the Mayer-Vietoris tree provides a minimal free resolution of $\ini(I_{n,k})$, we conclude that ${\rm pd}(\ini(I_{n,k})) = n - 1$.

For the regularity, observe that the maximal degree of a generator of $\ini(I_{n,k})$ is $k + m$. Additionally, if there is more than one generator of $\ini(I_{n,k})$ of a given degree $d$, then for each such generator, there is another generator of the same degree that differs only in one variable. When taking a squared variable as a pivot, the maximal degree of the generators and the dimension in a node $J$ remains unchanged in the subsequent node $J'$. In the reduced node $\widetilde{J}$, where the dimension increases by one, a common factor of degree $2$ is added, while the maximal degree of a squarefree generator decreases by at least $1$. Therefore, the regularity does not increase. In the final nodes, the squarefree part ensures that for the generator with the highest degree, there is always another generator (of the same degree or lower) that differs only in one variable. Thus, regularity does not increase in the trees hanging from the final nodes either. Consequently, we have ${\rm reg}(\ini(I_{n,k})) = k + m$, and this value is attained in the generating set of $\ini(I_{n,k})$.
\end{proof}

\subsection{Relation between Betti numbers of $\ini(I_{n,k})$ and $\ini(I_{n-1,k})$} 
We now show that the study of $\ini(I_{n,k})$ for general $n$ can be reduced to the cases where $n = k + 2m$ for some integer $m$. Hence, in what follows, we focus on these cases.
\begin{lemma} Let $n=k+2m+1$ for some $m$. Then
\[
\beta_{i,j}(\ini(I_{n,k}))=\beta_{i,j}(\ini(I_{n-1,k}))+ \beta_{i-1,j-2}(\ini(I_{n-1,k})).
\]
\end{lemma}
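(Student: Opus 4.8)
The plan is to exploit the fact that, with $n = k+2m+1$, exactly one variable — say $x_n$ — is "extra" relative to the even case $n-1 = k+2m$. I would analyze how the minimal generators of $\ini(I_{n,k})$ split according to whether they involve $x_n$ or not. By Theorem~\ref{thm:GBasis} and Lemma~\ref{lem:LatticePathBijections}, the squarefree minimal generators of $\ini(I_{n,k})$ correspond to $(N,E)$-lattice paths of length at most $n$ touching $y = x+k$ for the first time at their last (north) step; since such a path for a degree-$(k+r)$ generator has exactly $2r+k$ steps and $2r+k \le 2m+k < n$, the variable $x_n$ never appears in any squarefree generator $g_{A,n,k}$. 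Hence the squarefree part $J_{n,k}$ equals $J_{n-1,k}$ as an ideal (just viewed in one more variable), and the only generator of $\ini(I_{n,k})$ genuinely involving $x_n$ is $x_n^2$. So $\ini(I_{n,k}) = \ini(I_{n-1,k}) + (x_n^2)$, where the first summand is an ideal in $\kb[x_1,\dots,x_{n-1}]$ and $x_n^2$ is a monomial in the new variable.

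The key step is then a standard short exact sequence / mapping cone argument for adding a new variable's power. Writing $R' = \kb[x_1,\dots,x_{n-1}]$, $R = R'[x_n]$, $L = \ini(I_{n-1,k}) R$, and $I = L + (x_n^2)$, we have $L \cap (x_n^2) = x_n^2 L$, giving the exact sequence
\[
0 \longrightarrow x_n^2 L \longrightarrow L \oplus (x_n^2) \longrightarrow I \longrightarrow 0.
\]
Since $x_n$ is a non-zerodivisor not appearing in any generator of $L$, a minimal free resolution of $L$ over $R$ is obtained by base change from one over $R'$ (so its Betti numbers are $\beta_{i,j}(\ini(I_{n-1,k}))$), and likewise $x_n^2 L$ has a minimal resolution that is the resolution of $L$ shifted by the degree-$2$ twist coming from $x_n^2$, so $\beta_{i,j}(x_n^2 L) = \beta_{i,j-2}(\ini(I_{n-1,k}))$. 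The resolution of $(x_n^2)$ is just $R(-2)$, contributing only in homological degree $0$. Taking the mapping cone of the comparison map $\FF_{x_n^2 L} \to \FF_L \oplus \FF_{(x_n^2)}$ lifting the inclusion yields a resolution of $I$, and the point is that this cone is already minimal: the generator $x_n^2$ is the unique generator divisible by $x_n$, so it cannot cancel against any generator of $x_n^2 L$ (all of which are divisible by $x_n^2$ times a nonconstant monomial in $x_1,\dots,x_{n-1}$) in homological degree $1$, and more generally no multidegree of $x_n^2 L$ in homological degree $i$ coincides with a multidegree of $L\oplus(x_n^2)$ in degree $i$ — the $L$-part has $x_n$-degree $0$ while the $x_n^2L$-part has $x_n$-degree $2$. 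Hence the mapping cone gives
\[
\beta_{i,j}(I) = \beta_{i,j}(L) + \beta_{i,j}\big((x_n^2)\big) + \beta_{i-1,j}(x_n^2 L),
\]
and since $\beta_{i,j}((x_n^2))$ is nonzero only for $(i,j)=(0,2)$, where it is absorbed into $\beta_{0,2}(\ini(I_{n,k}))$ correctly (as $x_n^2$ is genuinely a new minimal generator), the stated formula $\beta_{i,j}(\ini(I_{n,k})) = \beta_{i,j}(\ini(I_{n-1,k})) + \beta_{i-1,j-2}(\ini(I_{n-1,k}))$ follows.

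The main obstacle I anticipate is the \emph{minimality} of the mapping cone — establishing that no cancellation occurs. Two routes are available: the direct multidegree argument sketched above (tracking the $x_n$-exponent to separate the two families of generators, which I find cleanest), or invoking the Mayer-Vietoris machinery already set up in the paper — Proposition~\ref{prop:repeated_nodes} together with the observation that choosing $x_n^2$ as the first pivot in $\mathrm{MVT}(\ini(I_{n,k}))$ produces a left subtree that is $x_n^2$ times $\mathrm{MVT}(\ini(I_{n-1,k}))$ and a right subtree that is $\mathrm{MVT}(\ini(I_{n-1,k}))$ itself, with no shared multidegrees in consecutive dimensions between the two subtrees because the $x_n$-exponents differ. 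Either way the minimality reduces to the bookkeeping on the extra variable, which is routine once phrased correctly; the rest of the argument is the formal mapping-cone Betti-number identity.
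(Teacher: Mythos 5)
Your proof uses the same key fact as the paper — namely that $\ini(I_{n,k}) = \ini(I_{n-1,k})R + (x_n^2)$ with the two summands living in disjoint sets of variables — but takes a mapping-cone route where the paper tensors resolutions. The paper observes that $R/\ini(I_{n,k})$ is a tensor product of $\kb[x_1,\ldots,x_{n-1}]/\ini(I_{n-1,k})$ with $\kb[x_n]/(x_n^2)$, so the minimal free resolution is the tensor product of the two minimal resolutions and the Betti numbers convolve; minimality comes for free. You instead build the short exact sequence $0\to x_n^2 L\to L\oplus(x_n^2)\to \ini(I_{n,k})\to 0$ and take the cone of the lifted inclusion, which requires the extra step of verifying that the cone is minimal. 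Your $x_n$-multidegree bookkeeping handles this correctly: every multidegree in the $x_n^2 L$ component has $x_n$-exponent exactly $2$, every multidegree in the $L$ component has $x_n$-exponent $0$, and the single $x_n$-exponent-$2$ multidegree on the $L\oplus(x_n^2)$ side, namely $2e_n$ in homological degree $0$, cannot match any multidegree of $x_n^2 L$ since all generators of $L$ are nonconstant monomials. The two arguments are essentially equivalent; the tensor-product route is a bit cleaner precisely because it sidesteps the minimality check.

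One step you should not hand-wave: your computation actually gives
\[
\beta_{i,j}(\ini(I_{n,k}))=\beta_{i,j}(\ini(I_{n-1,k}))+\beta_{i-1,j-2}(\ini(I_{n-1,k}))+\beta_{i,j}\big((x_n^2)\big),
\]
and the last term equals $1$ when $(i,j)=(0,2)$ and $0$ otherwise. Saying it is ``absorbed'' is not a valid inference; what you have shown at $(0,2)$ is $\beta_{0,2}(\ini(I_{n,k}))=\beta_{0,2}(\ini(I_{n-1,k}))+1$, which is genuinely correct (the new minimal generator is $x_n^2$), but is not the lemma's formula at that index. The paper's own proof glosses over the same boundary case: it conflates the Betti numbers of the quotient $\kb[x_n]/(x_n^2)$, which are $\beta_{0,0}=\beta_{1,2}=1$, with those of the ideal $(x_n^2)$, which are concentrated at $\beta_{0,2}=1$. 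The printed identity is accurate for the Betti numbers of the quotients $R/\ini(I_{n,k})$, or, with the ideal convention the paper sets up, for all $(i,j)\neq(0,2)$. Your derivation correctly isolates the correction term; keep it explicit rather than waving it away.
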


\begin{proof}
    Note that $\ini(I_{n,k}) = \ini(I_{n-1,k}) + ( x_{n}^2 )$, where the ideals $\ini(I_{n-1,k})$ and $( x_{n}^2 )$ belong to  polynomial rings with disjoint sets of variables. Consequently, the minimal free resolution of $\ini(I_{n,k})$ is given by the tensor product of the minimal free resolution of $\ini(I_{n-1,k})$ and that of $( x_{n}^2 )$. Moreover, the only non-zero Betti numbers of $( x_{n}^2 )$ are $\beta_{0,0} = 1$ and $\beta_{1,2} = 1$. 
Given these facts, we derive the relation 
\begin{eqnarray*}
    \beta_{i,j}(\ini(I_{n,k})) &=& \beta_{i,j}(\ini(I_{n-1,k}))\beta_{0,0}\big( ( x_{n}^2 ) \big) + \beta_{i-1,j-2}(\ini(I_{n-1,k}))\beta_{1,2}\big( ( x_{n}^2 ) \big) \\ &=& \beta_{i,j}(\ini(I_{n-1,k}))+ \beta_{i-1,j-2}(\ini(I_{n,k})).
\end{eqnarray*}
Note that the second summand is zero when $i = 0$ or $j < 2$.
\end{proof}
\subsection{The shape of the Betti table of $\ini(I_{n,k})$}

The analysis of the structure of $\mathrm{MVT}(\ini(I_{n,k}))$ provides insight into the structure of the Betti table of $\ini(I_{n,k})$.
The Betti table of $\ini(I_{n,k})$ encodes the Betti numbers, where the entry in column $i$ and row $j$ corresponds to $\beta_{i,i+j}(\ini(I_{n,k}))$. In the table presented below, single dots represent zero values, while asterisks denote non-zero values.

\begin{proposition}
Let $n=k+2m$. Let $\beta_{i,j}$ denote $\beta_{i,j}(\ini(I_{n,k}))$. Then
\begin{enumerate}
    \item $\beta_{0,2}=n+1$ if $k=2$, $\beta_{0,2}=n$ if $k>2$, and \\ $\beta_{0,j}=C_{j-k}^{k-1}$ for all ${k}\leq j\leq {k+m}$, 
    \item $\beta_{i,2(i+1)}=\binom{n}{i+1}$ and $\beta_{i,j}=0$ for all $j<i+k$, $j\neq 2(i+1)$ for $i=1,\dots,k-3$,
    \item $\beta_{k-1+2i,k-1+2i+k+i}=C_{i+1}^{k-1}$, for $i=0,\dots,m$,
   \item for $i=0,\dots,m$, the following hold:
    \begin{itemize}
               \item $\beta_{k-1+2i,k-1+2i+j}=0$ for all $j<k+i$,        
       \item $\beta_{k-1+2i,k-1+2i+j}\neq 0$ for all $j>k+i$,
 \item 
 {$\beta_{j,j+(k+i)}=0$} for all 
 {$j>k-1+2i$},
       \item 
       {$\beta_{j,j+(k+i)}\neq 0$} for all 
       {$j<k-1+2i$}.
    \end{itemize}

\end{enumerate}
In particular, the Betti diagram of $\ini(I_{n,k})$ is 
\[
\begin{array}{c|ccccccccccc}
          & 0 & 1 & \cdots& k-1 & k & k+1 & \cdots & k-1+2i &\cdots &k-1+2m\\
      \hline
      2  & n & \cdot &\cdot&\cdot&\cdot&\cdot&\cdot&\cdot&\cdot& \cdot\\
      3  & \cdot & * &\cdot&\cdot&\cdot&\cdot&\cdot&\cdot&\cdot& \cdot\\
      4  & \cdot & \cdot &*&\cdot&\cdot&\cdot&\cdot&\cdot&\cdot& \cdot\\
      \vdots  &\cdot&\cdot&\cdot&\cdot&\cdot&\cdot&\cdot&\cdot&\cdot& \cdot\\
      k  & C_0^{k-1} & * & \cdots & C_1^{k-1} &\cdot&\cdot&\cdot&\cdot&\cdot& \cdot\\
      k+1  & C_1^{k-1} & * & * & * & * & C_2^{k-1} &\cdot&\cdot&\cdot& \cdot\\
      \vdots  &\vdots&*&*&*&*&*&\ddots&\cdot&\cdot& \cdot\\
    k+i  & C_i^{k-1} &*&*&*&*&*&\cdots&C_{i+1}^{k-1}&\cdot& \cdot\\
      \vdots  &\vdots&\vdots&\vdots&\vdots&\vdots&\vdots&\vdots&\vdots&\ddots& \cdot\\
      k+m  & C_m^{k-1} &*&*&*&*&*& * &*&*& C_{m+1}^{k-1}
      \end{array}
\]
\end{proposition}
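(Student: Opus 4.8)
The plan is to read off the entire Betti table from the minimal free resolution of $\ini(I_{n,k})$ supported on the Mayer--Vietoris tree $\mathrm{MVT}(\ini(I_{n,k}))$ constructed above. Since that resolution was shown to be minimal, Proposition~\ref{prop:repeated_nodes} tells us $\beta_{i,\mu}(\ini(I_{n,k}))$ equals the number of relevant nodes of dimension $i$ in which $x^{\mu}$ is a minimal generator; hence no cancellation can occur, and it suffices to locate relevant nodes and record two invariants — the node's dimension (its homological degree) and the degrees of the minimal generators it introduces. For explicit values I will supplement this with Corollary~\ref{cor:MuraiFormula} and with the generator count $C_{j-k}^{k-1}$ from Corollary~\ref{cor:ConvolutionCatalanNumbers}; the role of the tree is mainly to control \emph{which} entries vanish.

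First, part $(1)$ is just the statement about minimal generators, i.e.\ the column $i=0$: in degree $2$ these are the $n$ squares, together with the single squarefree generator $x_1x_2$ when $k=2$ (matching $C_0^{k-1}=1$), and in degrees $k\le j\le k+m$ these are the $C_{j-k}^{k-1}$ generators $g_{A,n,k}$ with $|A|=j$ by Corollary~\ref{cor:ConvolutionCatalanNumbers}. Next, for part $(2)$ I would plug $p=i$ and internal degree $2(i+1)=p+s$, i.e.\ $s=i+2\le k-1$, into \eqref{eq:Murai_formula}: both sums over squarefree monomials of $J_{n,k}$ are empty (nothing of degree $<k$ lies in $J_{n,k}$), so only the term $\delta_{s-1,p+1}\binom{n}{p+1}=\binom{n}{i+1}$ survives, and the same computation gives $\beta_{i,j}=0$ whenever $j<i+k$ and $j\ne 2(i+1)$. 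Structurally this is the sub-tree of $\mathrm{MVT}(\ini(I_{n,k}))$ that always picks a square pivot; it reproduces $\mathrm{MVT}((x_1^2,\dots,x_n^2))$ and its Koszul numbers $\binom{n}{i+1}$ placed in internal degree $2(i+1)$.

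For the staircase shape — the vanishing assertions of part $(4)$ — the structural input, already exploited in the minimality proof, is that every relevant node of $\mathrm{MVT}(\ini(I_{n,k}))$ has the form $x_I^2\cdot J'$ with $J'$ squarefree stable, and that if it uses $d$ square pivots and then lies at dimension $e$ inside the chosen minimal Mayer--Vietoris tree of $J'$, its dimension is $d+e$ and the generators it introduces have internal degree $2d+\deg$ with $\deg\ge\delta'+e$, where $\delta'$ is the least generator degree of $J'$. Tracking how the residual squarefree ideals degenerate as squares get peeled off — which is governed by the lattice-path description of $\mathcal A$, a peeled variable being one of the last indices and the surviving generators continuing to record paths that first touch $y=x+k$ at their final step — one bounds the internal degree of every relevant node of dimension $k-1+2i$ from below by $(k-1+2i)+(k+i)$ in the relevant direction, which yields $\beta_{k-1+2i,k-1+2i+j}=0$ for $j<k+i$ and $\beta_{j,j+(k+i)}=0$ for $j>k-1+2i$; the outermost of these are in any case forced by the already-computed $\mathrm{reg}(\ini(I_{n,k}))=k+m$ and $\mathrm{pd}(\ini(I_{n,k}))=n-1$. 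The non-vanishing assertions of part $(4)$ I would obtain by producing, for each target $(\text{homological degree},\text{internal degree})$ in the permitted range, an explicit relevant node of the correct dimension whose multidegree is not repeated in an adjacent dimension (using Lemma~\ref{lem3} for the minimal Mayer--Vietoris tree of the residual squarefree stable ideal, and Corollary~\ref{cor:ConvolutionCatalanNumbers} to supply the generators needed to build it); alternatively one checks that the alternating sum in \eqref{eq:Murai_formula} is strictly positive in the staircase interior.

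Finally, part $(3)$: the corner value $\beta_{k-1+2i,(k-1+2i)+(k+i)}$ is fed by the \emph{final} nodes $x_{l_1}^2\cdots x_{l_d}^2(x_{l'_1},\dots,x_{l'_e})$, each carrying a hanging subtree isomorphic to the Koszul complex on the $e$ linear forms $x_{l'_1},\dots,x_{l'_e}$, whose top relevant node contributes $1$ to homological degree $d+e-1$ in internal degree $2d+e$, i.e.\ to row $d+1$. Matching $d+e-1=k-1+2i$ and $d+1=k+i$ pins down $d=k+i-1$ and $e=i+1$, so this corner value equals the number of final nodes obtained by peeling $k+i-1$ squares after which the residual squarefree stable ideal has collapsed to the ideal generated by exactly $i+1$ variables. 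The hard part will be to show this count is $C_{i+1}^{k-1}$: I expect to exhibit a bijection between these final nodes and the $(N,E)$-lattice paths of length $n$ that first touch $y=x+k$ at their last (north) step after $k+(i+1)$ steps north — the degree-$(k+i+1)$ lattice-path data of $\mathcal A$ — and then invoke Corollary~\ref{cor:ConvolutionCatalanNumbers}; one must also verify that no other relevant node occupies the same $(\text{dimension},\text{multidegree})$ slot, so that Proposition~\ref{prop:repeated_nodes} gives exactly $C_{i+1}^{k-1}$. Assembling $(1)$–$(4)$ then produces the displayed Betti diagram, with the bottom-right entry $\beta_{n-1,(n-1)+(k+m)}=C_{m+1}^{k-1}$ appearing as the extremal Betti number of maximal homological degree.
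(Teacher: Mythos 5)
Your overall strategy — reading off the Betti table from the (already-shown-minimal) Mayer--Vietoris resolution via Proposition~\ref{prop:repeated_nodes}, supplemented by the formulas from Corollaries~\ref{cor:MuraiFormula} and~\ref{cor:ConvolutionCatalanNumbers} — is the same as the paper's, and parts~(1) and~(2) are sound. In fact your argument for~(2) is cleaner than the paper's: inserting $p=i$, $s=i+2\le k-1$ into \eqref{eq:Murai_formula} immediately kills both sums (no squarefree monomial of $J_{n,k}$ has degree below $k$), leaving only the Kronecker term $\binom{n}{i+1}$, and the same substitution gives all the claimed vanishing in low degrees; the paper just asserts this count.

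The genuine gaps are in parts~(3) and~(4), and you flag them yourself. For~(3), after you pin down $d=k+i-1$ and $e=i+1$, everything hinges on showing that the number of final nodes of the form $\sigma^2(x_{l_1},\dots,x_{l_{i+1}})$ is exactly $C_{i+1}^{k-1}$; you say you ``expect to exhibit a bijection'' with the degree-$(k+i+1)$ lattice paths, but you do not construct it or explain why the candidate bijection preserves the relevant constraint (namely that $x_\sigma\notin\ini(I_{n,k})$). The paper instead counts these $\sigma\subseteq[k+2i]$ with $|\sigma|=k+i-1$ by inclusion--exclusion, arriving at $\binom{k+2i}{k+i-1}-\sum_{j=0}^{i-1}C_j^{k-1}\binom{2i-2j}{i-j-1}=C_{i+1}^{k-1}$; some justification of this combinatorial identity is what is really needed, and your proposal does not supply it. For~(4), two separate bounds are needed — one saying that a relevant node of dimension $k-1+2i$ cannot have internal degree below $(k-1+2i)+(k+i)$, and an independent one saying that no relevant node of dimension $>k-1+2i$ can sit at internal-degree offset exactly $k+i$ — but your phrase ``bounds the internal degree \dots in the relevant direction'' conflates these; the paper handles them by splitting on $|\sigma|<k+i-1$, $|\sigma|>k+i-1$, and $x_\sigma\in\ini(I_{n,k})$. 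Likewise the non-vanishing claims require you to actually exhibit relevant nodes in each permitted $(\text{homological degree},\text{internal degree})$ slot — the paper does this explicitly, e.g.\ via $\sigma^2=x_{i+2}^2\cdots x_{k+2i}^2$ and the generator $(\sigma^2/x_{i+2}^2\cdots x_{i+1+\ell}^2)x_{i+1}\cdots x_{k+2i}$ — whereas your ``I would obtain by producing \dots'' stops short of producing them. Filling in these three pieces (the count in~(3), the two-directional vanishing bound, and the explicit non-vanishing witnesses) would complete the argument.
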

\begin{proof}
(1) The formula follows directly from Corollary~\ref{cor:ConvolutionCatalanNumbers} and the fact that the squares of the variables are among the minimal generators of $\ini(I_{n,k})$.
\smallskip

(2) The only generators in dimension $i$ for $i = 0, \dots, k-3$ in degree $2(i+1)$ are formed by products of $i+1$ squared variables. For each $i$, there are ${{n} \choose {i+1}}$ such products, as desired. 
\smallskip 

(3) Let $i \in\{0,\dots,m\}$. Take any sequence $\sigma$ consisting on $k+i-1$ variables among the first $k+2i$ variables, such that their product (which we will also denote by $\sigma$) is not in the ideal ${\rm in}(I_{n,k})$. Then, the final node corresponding to taking as pivots the elements of $\sigma$ in descending order has the form
\[
\sigma^2(x_{l_1},\dots,x_{l_{i+1}}, \xb^{\mu_1},\dots,\xb^{\mu_s}),
\]
where $\{x_{l_1},\dots,x_{l_{i+1}}\}=\{x_1\dots,x_{k+2i}\}\setminus \sigma$, and the $\xb^{\mu_j}$ are monomials of degree strictly larger than $1$, involving also variables larger than $x_{k+2i}$.

This final node has dimension $\vert\sigma\vert=k+i-1$, and has a descendant $\sigma^2(x_{l_1},\dots,x_{l_{i+1}})$ of the same dimension, which is also a final node. The subtree hanging from this final node has a leaf such that its only generator is $\sigma^2x_{l_1}\cdots x_{l_{i+1}}$ at dimension $(k+i-1)+i=k+2i-1$, which has degree $2(k+i-1)+(i+1)$ and is a generator of the minimal free resolution of ${\rm in}(I_{n,k})$, hence contributing in one unit to $\beta_{k-1+2i,k-1+2i+k+i}$. The number of such sequences is $C^{k-1}_{i+1}$ by Corollary~\ref{cor:ConvolutionCatalanNumbers}; as a byproduct, we obtain the identity
\[
{{k+2i}\choose{k+i-1}}-\sum_{j=0}^{i-1}C^{k-1}_{j}{{2i-2j}\choose{i-j-1}}=C^{k-1}_{i+1}
\]
using the unique factorization (guaranteed by squarefree stability of $\ini(I_{n,k})$) of squarefree monomials $u\in \ini(I_{n,k})\cap\mathbf{k}[x_1,\ldots,x_{k+2i}]$ as $u=v\cdot w$, where $v$ is a minimal generator of $\ini(I_{n,k})\cap\mathbf{k}[x_1,\ldots,x_{k+2i}]$ and $\max v < \min w$.

(4) 
We show that no other sequence of variables produces a final node with a hanging tree at this specific dimension and degree. We analyze the following cases:

Sequences $\sigma$ corresponding to monomials in the ideal $\ini(I_{n,k})$ do not appear in the nodes of the tree, by construction.
 
Sequences $\sigma$ such that $|\sigma| < k + i - 1$ correspond to final nodes, where fewer than $i+1$ variables appear as isolated generators. Therefore, the descendants of these final nodes produce subtrees with either a projective dimension strictly less than $i + (k + i - 1 - |\sigma|)$, or a total degree greater than $2k + 3i - 1$. Thus, none of these contributes to $\beta_{k-1+2i, k-1+2i+k+i}$.

Sequences $\sigma$ such that $|\sigma| > k + i - 1$ correspond to final nodes in dimension $|\sigma| > k + i - 1$. To reach dimension $k-1+2i$ at the correct degree, the degree increase would need to be smaller than $k-1+2i + k+i - 2|\sigma|$ in $k-1+2i - |\sigma|$ dimensional steps, meaning at least one step produces no increase. Therefore, these sequences do not contribute to $\beta_{k-1+2i, k-1+2i+k+i}$.

Regarding the claims on non-vanishing Betti numbers, we again consider the subtree hanging from $\sigma^2(x_{l_1}, \dots, x_{l_{i+1}})$ introduced in the proof of item (3). Along the path to its leaf $\sigma^2x_{l_1} \cdots x_{l_{i+1}}$, there are relevant nodes of dimensions $k+i-1+\ell$ where $\ell = 0, \dots, i-1$, containing at least the generator $\sigma^2x_{l_1} \cdots x_{l_{\ell+1}}$, which implies that $\beta_{k+i-1+\ell, k+i-1+\ell+(k+i)} \neq 0$. To show that $\beta_{k+i-1-\ell, k+i-1-\ell+(k+i)} \neq 0$ also holds for $\ell = 1, \dots, k+i-1$, we specialize to $\sigma$ with $\sigma^2 = x_{i+2}^2 \cdots x_{k+2i}^2$. An inspection of the associated lattice paths shows that this is a valid sequence, and that $\big(\sigma^2 / x_{i+2}^2 \cdots x_{i+1+\ell}^2\big)x_{i+1} \cdots x_{k+2i}$ is a generator of degree $\ell+1+2(k+i-1-\ell) = k+i-1-\ell+(k+i)$ in a relevant node of dimension $k+i-1-\ell$. Hence, $\beta_{k+i-1-\ell, k+i-1-\ell+(k+i)} \neq 0$.

Finally, the Betti numbers data determine the shape of the Betti diagram.
\end{proof}

\section{Further directions}
We conclude the paper by presenting two conjectures and discussing potential extensions of our work to ideals of the general form $$I_{a_1,\ldots,a_{n+1}}=(x_1^{a_1}, \ldots, x_n^{a_n}, (x_1 + \cdots + x_n)^{a_{n+1}})$$ for various values of $a_i$. 
The first conjecture is on the coefficients of the Gr\"obner basis for $I_{a_1,\ldots,a_{n+1}}$ and comes from experiments in Macaulay2~\cite{M2}. In the following, we fix the reverse lexicographic order.
\begin{conjecture}
Each coefficient occurring in the Gr\"obner basis of $I_{a_1,\ldots,a_{n+1}}$ 
is a (possibly empty) product of primes less than $\max \{a_1,\ldots,a_n \}.$
\end{conjecture}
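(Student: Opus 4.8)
The plan is to separate the one completely routine reduction from the genuinely hard combinatorial input, and to isolate why the bound is $\max\{a_1,\dots,a_n\}$ rather than $\max\{a_1,\dots,a_{n+1}\}$. Write $M=\max\{a_1,\dots,a_n\}$, put $\ell=x_1+\cdots+x_n$, and let $\mathrm{NF}$ denote the normal form with respect to the monomial complete intersection $(x_1^{a_1},\dots,x_n^{a_n})\subseteq I_{a_1,\dots,a_{n+1}}$. Reducing the non-monomial generator gives
\[
\mathrm{NF}(\ell^{a_{n+1}})=\sum_{\substack{b_1+\cdots+b_n=a_{n+1}\\ 0\le b_i\le a_i-1}}\binom{a_{n+1}}{b_1,\dots,b_n}\,x_1^{b_1}\cdots x_n^{b_n}.
\]
For a prime $p\ge M$ every occurring exponent satisfies $b_i\le a_i-1<p$, hence $v_p(b_i!)=0$, and Legendre's formula gives $v_p\!\big(\binom{a_{n+1}}{b_1,\dots,b_n}\big)=v_p(a_{n+1}!)$, a value independent of $(b_1,\dots,b_n)$. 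Setting $w=\prod_{q\ge M}q^{\,v_q(a_{n+1}!)}$ we therefore obtain $\mathrm{NF}(\ell^{a_{n+1}})=w\cdot h$, where $h$ has integer coefficients all of whose prime factors are $<M$, and $I_{a_1,\dots,a_{n+1}}=(x_1^{a_1},\dots,x_n^{a_n},h)$. After this harmless change of generator, \emph{every} generator already satisfies the conclusion of the conjecture, the multinomial coefficients (the obvious source of large primes) have disappeared, and one sees that only primes $<M$ remain in play; this is exactly where the role of $\max\{a_1,\dots,a_n\}$ comes from.

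The remaining, genuinely hard, part would reproduce the mechanism of Section~\ref{sec:description} in this generality. One first needs a description of the reverse lexicographic initial ideal $\mathrm{in}(I_{a_1,\dots,a_{n+1}})$ — the analogue of the $(N,E)$-lattice-path model of Lemma~\ref{lem:LatticePathBijections}. Granting such a description, each reduced Gröbner basis element $g=m+t$ (leading monomial $m$, reduced tail $t$) should be written as an explicit $\mathbb{Q}$-linear combination $g=\sum_S\lambda_S\,\mathrm{NF}(x_S\,h)$ of the building blocks $\mathrm{NF}(x_S h)$, exactly as $g_{A,n,k}$ is written in terms of the $f_{S,n,k}$ in Theorem~\ref{thm:f_description}, and the coefficients $\lambda_S$ should solve a triangular linear system whose entries are the $(<M)$-smooth integer coefficients of $h$ — the analogue of system~\eqref{eq:InhomogLinSys_3} and of Lemma~\ref{lem:BinomialCoeffsAlternatingSums}. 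Solving it by Cramer's rule, the determinant is a $(<M)$-smooth integer, so each $\lambda_S$ is a rational with $(<M)$-smooth denominator. The final step is to show that, after assembling $g$ and passing to its monic reduced form, all denominators cancel — precisely as in the quadratic case, where the factors $\tfrac{k}{(k+i)\binom{d}{k+i}}$ are individually non-integral yet $g_{A,n,k}$ comes out with coefficients in $\{0,1\}$ — leaving a $(<M)$-smooth integer at every monomial of $g$.

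The principal obstacle is the very first of these steps: no description of $\mathrm{in}(I_{a_1,\dots,a_{n+1}})$ is currently available for general exponents, the lattice-path model being tied to the squarefree situation $a_1=\cdots=a_n=2$, and even the Hilbert function of $R/I_{a_1,\dots,a_{n+1}}$ is only known conditionally on the Fröberg conjecture \cite{Froberg}. A second, independent difficulty is that the conjecture claims the coefficients are \emph{integers}, not merely rationals with $(<M)$-smooth denominators, so the denominator cancellation must be proved rather than deduced: reduction modulo a prime $p\ge M$ is harmless and controls the denominators, but says nothing about integrality, while reduction modulo a prime $p<M$ can destroy the structure entirely (already modulo $2$ when $M=2$). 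One should also not expect a naive induction on Buchberger's algorithm to succeed, since forming an S-polynomial divides by leading coefficients and need not preserve $(<M)$-smooth integrality; the argument must run through a structured presentation of the basis elements. A realistic intermediate target, attackable with the methods of this paper, is the two-variable case $n\le 2$, or the equigenerated case $a_1=\cdots=a_n=a$ with $a_{n+1}$ arbitrary, where the valuation computation above already produces the required factorization of all the relevant multinomial coefficients.
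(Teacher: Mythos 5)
The statement you are addressing is a \emph{conjecture} in the paper, motivated purely by Macaulay2 experiments; the paper itself offers no proof, so there is nothing to match your argument against, and the honest question is simply whether you close the gap. You do not, and you say so plainly. The opening reduction is, however, both correct and illuminating. Writing $M=\max\{a_1,\dots,a_n\}$ and $\ell=x_1+\cdots+x_n$, every exponent $b_i$ appearing in $\mathrm{NF}(\ell^{a_{n+1}})$ satisfies $b_i\le a_i-1<M$, so for a prime $p\ge M$ one has $v_p(b_i!)=0$ and hence $v_p\bigl(\binom{a_{n+1}}{b_1,\dots,b_n}\bigr)=v_p(a_{n+1}!)$, a value independent of $(b_1,\dots,b_n)$. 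Dividing by $w=\prod_{p\ge M}p^{v_p(a_{n+1}!)}$ therefore replaces the non-monomial generator by a polynomial $h$ whose integer coefficients are products of primes $<M$, without changing the ideal. This cleanly explains why the conjectured bound is $\max\{a_1,\dots,a_n\}$ rather than $\max\{a_1,\dots,a_{n+1}\}$, and it recovers, when $a_1=\cdots=a_n=2$, the paper's replacement of $(x_1+\cdots+x_n)^k$ by $e_k(x_1,\dots,x_n)$.

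What the reduction does \emph{not} do is control the coefficients of the reduced Gr\"obner basis, which is what the conjecture is actually about. Buchberger's algorithm divides by leading coefficients at every step, so $(<M)$-smooth integrality of one choice of generators does not propagate to $S$-polynomials or their remainders; you correctly flag this, and you correctly identify the three missing ingredients: a combinatorial description of $\ini(I_{a_1,\dots,a_{n+1}})$ generalizing the $(N,E)$-lattice-path model of Lemma~\ref{lem:LatticePathBijections}, a structured presentation of each reduced basis element as a $\mathbb{Q}$-linear combination of the $\mathrm{NF}(x_S h)$ solving a triangular system as in Theorem~\ref{thm:f_description} and Lemma~\ref{lem:BinomialCoeffsAlternatingSums}, and a denominator-cancellation argument for the assembled monic reduced form. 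None of these is presently available in the generality required: the lattice-path model is specific to $a_1=\cdots=a_n=2$, and even the Hilbert function of $R/I_{a_1,\dots,a_{n+1}}$ is known only up to the Fr\"oberg bound. Your proposal is thus a correct preliminary reduction together with an accurate map of the obstacles; it narrows the problem and suggests sensible intermediate targets, but it leaves the conjecture open, exactly as the paper does.
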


In the case of $I_{n,k}$, we have $a_1=\cdots = a_n=2$ and $a_{n+1}=k$, so the conjecture states that all nonzero coefficients should be $1$, which holds by Theorem~\ref{thm:GBasis}. 

\medskip
Given a sequence of integers $(a_1, \ldots, a_n)$, there exists an extension of square-free strongly stable ideals, called $(a_1, \ldots, a_n)$-strongly stable ideals; see \cite[p.~1322]{Murai2008}. Let $J_{a_1, \ldots, a_{n+1}}$ denote the ideal generated by all monomials $x_1^{b_1} \cdots x_n^{b_n}$ in $I_{a_1, \ldots, a_{n+1}}$ such that $b_1 < \min\{a_1, a_{n+1}\}$ and $b_j < a_j$ for all $j > 1$.

Our computations suggest that $J_{a_1, \ldots, a_{n+1}}$ is $(\min\{a_1, a_{n+1} \}, a_2, \dots, a_n)$-strongly stable and that the initial ideal of $I_{a_1, \dots, a_{n+1}}$ has interesting combinatorial properties. For example, we propose the following conjectural generalization of Proposition~\ref{prop:Strongly_stable}.

\begin{conjecture}
The initial ideal $\mathrm{in}(I_{a_1,\ldots,a_{n+1}})$ 
can be decomposed as 
\[
\mathrm{in}(I_{a_1,\ldots,a_{n+1}}) = (x_1^{\min\{a_1, a_{n+1} \}}, x_2^{a_2},\dots, x_n^{a_n}) + J_{a_1,\ldots,a_{n+1}}.
\]
\end{conjecture}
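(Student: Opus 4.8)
The plan is to extend the lattice-path / squarefree-part machinery of Section~2 to arbitrary exponents $a_1,\dots,a_{n+1}$. The first step is to set up the analogue of the squarefree part: for a polynomial $f$, let $\mathrm{NF}(f)$ denote its normal form modulo the monomial ideal $M=(x_1^{a_1},x_2^{a_2},\dots,x_n^{a_n})$, and observe as in Lemma~\ref{lem:SFPofPolys_and_I_n_k} that $f\in I_{a_1,\dots,a_{n+1}}$ if and only if $\mathrm{NF}(f)\in I_{a_1,\dots,a_{n+1}}$, because $M\subseteq I_{a_1,\dots,a_{n+1}}$. Since $x_1^{a_{n+1}}\in ((x_1+\cdots+x_n)^{a_{n+1}}, x_2^{a_2},\dots,x_n^{a_n})$ after reducing, one should first check the easy containment $(x_1^{\min\{a_1,a_{n+1}\}},x_2^{a_2},\dots,x_n^{a_n})\subseteq \mathrm{in}(I_{a_1,\dots,a_{n+1}})$: the powers $x_i^{a_i}$ for $i\geq 2$ are generators, and $x_1^{\min\{a_1,a_{n+1}\}}$ is the leading term of either $x_1^{a_1}$ (if $a_1\le a_{n+1}$) or of the normal form of $(x_1+\cdots+x_n)^{a_{n+1}}$ modulo $(x_2^{a_2},\dots,x_n^{a_n})$, whose unique term involving only $x_1$ is $x_1^{a_{n+1}}$ and is the leading one in reverse lexicographic order with $x_1\succ\cdots\succ x_n$ (if $a_{n+1}<a_1$). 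Then one must show $J_{a_1,\dots,a_{n+1}}\subseteq\mathrm{in}(I_{a_1,\dots,a_{n+1}})$, i.e.\ every monomial $x_1^{b_1}\cdots x_n^{b_n}$ in $I_{a_1,\dots,a_{n+1}}$ with $b_1<\min\{a_1,a_{n+1}\}$ and $b_j<a_j$ for $j>1$ is itself an initial term; this should follow by exhibiting, for each such monomial $u$, an explicit element of $I_{a_1,\dots,a_{n+1}}$ with leading term $u$, generalizing the polynomials $g_{A,n,k}$ (the natural candidate is a suitable normal form of $x_S(x_1+\cdots+x_n)^{a_{n+1}}$, or an integer combination of such, analogous to Theorem~\ref{thm:f_description}).

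The reverse inclusion $\mathrm{in}(I_{a_1,\dots,a_{n+1}})\subseteq (x_1^{\min\{a_1,a_{n+1}\}},x_2^{a_2},\dots,x_n^{a_n})+J_{a_1,\dots,a_{n+1}}$ is equivalent to a Hilbert-function count: one shows that the right-hand monomial ideal $K$ already accounts for all of $\mathrm{in}(I_{a_1,\dots,a_{n+1}})$ by proving $\dim_{\kb}(R/K)_d\le \dim_{\kb}(R/I_{a_1,\dots,a_{n+1}})_d$ for every $d$. The key input here is the Fröberg-type lower bound combined with a count of the standard monomials of $K$: a monomial $x_1^{b_1}\cdots x_n^{b_n}$ is standard for $K$ precisely when $b_1<\min\{a_1,a_{n+1}\}$, $b_j<a_j$ for $j\ge 2$, and it avoids all generators of $J_{a_1,\dots,a_{n+1}}$. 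I would encode these as generalized lattice paths (with $a_i-1$ allowed ``stacked'' steps in direction $i$) touching an appropriate barrier, mirroring Lemma~\ref{lem:CountingArgument} and Corollary~\ref{cor:Hilbert_series}, and show the resulting count equals $\bigl[\prod_i(1-t^{a_i})/(1-t)^n\bigr]_d$. Since $R/I_{a_1,\dots,a_{n+1}}$ is thin by Stanley--Watanabe (its Hilbert series is exactly this truncated product), the two dimensions coincide degreewise, forcing $\mathrm{in}(I_{a_1,\dots,a_{n+1}})=K$.

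The main obstacle I anticipate is the combinatorial heart of the second step: giving a clean description of which monomials are standard for $K$ and proving their generating function equals the truncated Fröberg series. In the equigenerated-squares case this was exactly the lattice-path argument of Lemma~\ref{lem:CountingArgument}, where the barrier is the line $y=x+k$; for general $a_i$ one needs the correct multidimensional analogue of that barrier, and the induction on $n$ used there must be re-examined because the ``one step earlier'' recursion now branches into $n$ cases with differing step caps. A secondary difficulty is verifying that the candidate polynomials witnessing $J_{a_1,\dots,a_{n+1}}\subseteq \mathrm{in}(I_{a_1,\dots,a_{n+1}})$ actually have the claimed leading terms and lie in the ideal — this is where the first conjecture of this section (on the prime factorizations of coefficients) suggests the linear systems determining the coefficients are nondegenerate, but a characteristic-zero hypothesis is certainly needed, and one should state the result accordingly.
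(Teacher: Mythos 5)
The statement you were asked to prove is not proved in the paper: it appears in the final \emph{Further directions} section as a conjecture, supported only by Macaulay2 experiments, so there is no paper proof to compare against. I can therefore only assess whether your plan is sound and whether the obstacles you name are the genuine ones.

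Your logical skeleton is correct and does mirror the paper's own Section~2 strategy. If one can (i) exhibit an ideal $K'\subseteq\mathrm{in}(I_{a_1,\dots,a_{n+1}})$ generated by the pure powers $x_1^{\min\{a_1,a_{n+1}\}},x_2^{a_2},\dots,x_n^{a_n}$ together with a family of monomials with small exponents, and (ii) show $\dim_{\kb}(R/K')_d$ is bounded above by the coefficient of the truncated Fr\"oberg series in every degree $d$, then Stanley--Watanabe thinness of $R/I_{a_1,\dots,a_{n+1}}$ forces $K'=\mathrm{in}(I_{a_1,\dots,a_{n+1}})$, which simultaneously identifies $J_{a_1,\dots,a_{n+1}}$ with the small-exponent generators. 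The parts you treat as easy really are: for $i\geq 2$ each $x_i^{a_i}$ is a generator, and $x_1^{a_{n+1}}$ is already the rev-lex leading term of $(x_1+\cdots+x_n)^{a_{n+1}}$ (no normal form modulo $(x_2^{a_2},\dots,x_n^{a_n})$ is needed), so $x_1^{\min\{a_1,a_{n+1}\}}\in\mathrm{in}(I_{a_1,\dots,a_{n+1}})$.

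The two items you flag as obstacles are, however, exactly the open content, and your proposal currently has no substance there. First, there is no known analogue of the polynomials $g_{A,n,k}$: Theorem~\ref{thm:GBasis} relies on the $g_{A,n,k}$ being elementary symmetric polynomials with $0,1$ coefficients, a rigidity that disappears once the $a_i$ may exceed $2$, and the preceding coefficient conjecture in the same section indicates the coefficients become genuinely complicated rather than just ``nondegenerate.'' Second, $J_{a_1,\dots,a_{n+1}}$ is defined in terms of $\mathrm{in}(I_{a_1,\dots,a_{n+1}})$ itself, so the proposed count of standard monomials of $K$ via generalized lattice paths presupposes an independent combinatorial description of $J$ --- which is precisely what is missing. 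The paper's own remark that Motzkin numbers replace Catalan numbers already for $a_1=\cdots=a_n=3$, $a_{n+1}=2$ shows that the $y=x+k$ barrier and the lattice-path recursion of Lemma~\ref{lem:CountingArgument} do not simply generalize. Until both of these gaps are filled, what you have is a plausible roadmap reproducing the structure of the paper's Section~2, not a proof, and your concluding caveats correctly locate where all the remaining work lies.
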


We remark that the recent result by Booth, Singh, and Vraciu \cite{booth2024weak} supports this conjecture for the case $a_i=3$ for $i = 1,\ldots,n+1$. Furthermore, when $a_1 = \cdots = a_n = 3$ and $a_{n+1} = 2$, our computations show that the Motzkin numbers count the elements of each degree in the initial ideal of $(x_1^3, \ldots, x_n^3, (x_1 + \cdots + x_n)^2)$, with these numbers characterized by certain lattice paths.

\smallskip

We also note that Schreyer's construction~\cite{Schreyer1980} provides a minimal free resolution for the monomial ideals $\ini(I_{n,k})$. However, we have excluded this construction here, as it closely parallels the resolution obtained via the Mayer-Vietoris construction. We expect that this property extends to other ideals of the form $I_{a_1, \ldots, a_{n+1}}$.

Moreover, since the resolutions of the ideals $\ini(I_{n,k})$ are given by mapping cones, we anticipate that these resolutions will be supported on a CW-complex, as discussed in~\cite{dochtermann2014cellular}. This will be explored in future work.

\section*{Acknowledgements}

Experimental computations in Macaulay2 
have been crucial for our work, which  started during a workshop on Gr\"obner free methods held at Universidad de La Rioja (Logro\~no, Spain). We thank the local organizers for their hospitality and Victor Ufnarovski for fruitful discussions in the beginning of the project. We also thank KU Leuven for hosting us for one week during the middle part of the project. We furthermore thank Mats Boij for providing insightful comments during talks given by the first two authors in the latter part of the project, and Per Alexandersson who pointed out the reference \cite{HAGLUND2018851} to us.

Samuel Lundqvist was supported by the Swedish Research Council grant VR2022-04009. 
Fatemeh Mohammadi was partially supported by the grants G0F5921N (Odysseus programme) and G023721N from the Research Foundation - Flanders (FWO), the UiT Aurora project MASCOT and the KU~Leuven~iBOF/23/064~grant. Eduardo S\'aenz-de-Cabez\'on was supported by grants PID2024-157733NBI00 funded by MCIN/AEI/10.13039/501100011033/FEDER EU and INICIA2023/02 from Gobierno de La Rioja (Spain).
\bibliographystyle{abbrv}
\bibliography{sample-main.bib}

\end{document}